\newtheorem{thm}[subsection]{Theorem}
\newtheorem{prop}[subsection]{Proposition}
\newtheorem{lemma}[subsection]{Lemma}
\theoremstyle{definition}  
\newtheorem{defn}[subsection]{Definition}
\newtheorem{example}[subsection]{Example}
\newtheorem{remark}[subsection]{Remark}
\newcommand{\cat}{\EuScript}    
\newcommand{\cA}{{\cat A}}      
\newcommand{\cB}{{\cat B}}      
\newcommand{\sA}{\cA}
\newcommand{\cobar}{{\mathcal{C}}}
\newcommand{\ra}{\rightarrow}                   
\newcommand{\map}{\rightarrow}
\newcommand{\ol}{\overline}
\newcommand{\epsilonbar}{\ol{\epsilon}}
\newcommand{\cl}{\mathrm{cl}}
\newcommand{\tp}{\mathrm{top}}
\newcommand{\field}[1]  {\mathbb #1} 
\newcommand{\F}         {\field F}
\newcommand{\R}         {\field R}
\newcommand{\Z}         {\field Z}
\newcommand{\C}         {\field C}
\newcommand{\M}         {\field M}
\newcommand{\A}         {\field A}
\DeclareMathOperator{\Gr}{Gr}
\DeclareMathOperator{\Sq}{Sq}
\DeclareMathOperator{\Hom}{Hom}
\DeclareMathOperator{\Ext}{Ext}
\DeclareMathOperator{\alg}{alg}
\newcommand{\dfn}{\textbf} 
\newcommand{\mdfn}[1]{\dfn{\mathversion{bold}#1}} 
\newcommand{\Smash}             {\wedge}
\newcommand{\tens}              {\otimes}               
\newcommand{\iso}               {\cong}
\numberwithin{equation}{subsection}
\newenvironment{myequation}
  {\addtocounter{subsection}{1}\begin{eqnarray}}
  {\end{eqnarray}$\!\!$}
\newcounter{themyfigure}
\newenvironment{myfigure}
	{\refstepcounter{themyfigure}}
    {}
\newcommand{\cirrad}{0.08}
\newcommand{\crad}{0.15}
\newcommand{\bigcrad}{0.2}
\newcommand{\labelrad}{0.1}
\begin{document}

\title{Low dimensional Milnor-Witt stems over $\R$}

\author{Daniel Dugger}
\address{Department of Mathematics\\ University of Oregon\\ Eugene, OR
97403} 
\email{ddugger@math.uoregon.edu}

\author{Daniel C.\ Isaksen}
\address{Department of Mathematics \\ Wayne State University\\
Detroit, MI 48202}
\email{isaksen@wayne.edu}

\subjclass[2000]{14F42, 55Q45, 55S10, 55T15}

\keywords{motivic stable homotopy group,
motivic Adams spectral sequence,
$\rho$-Bockstein spectral sequence,
Milnor-Witt stem}


\begin{abstract}
This article
computes some motivic stable homotopy groups over $\R$.
For $0 \leq p - q \leq 3$,
we describe the motivic stable homotopy groups
$\hat\pi_{p,q}$ of a completion of the motivic sphere spectrum.  
These are the first four
Milnor-Witt stems.  We start with the known $\Ext$ groups over $\C$
and apply the $\rho$-Bockstein spectral
sequence to obtain $\Ext$ groups over $\R$.
This is the input to an Adams spectral sequence, which collapses in our
low-dimensional range.
\end{abstract}

\maketitle


\section{Introduction}
This paper takes place in the context of motivic stable homotopy
theory over $\R$.  
Write $\M_2=H^{*,*}(\R;\F_2)$ for the bigraded motivic
cohomology ring of a point, and write $\sA$ for the motivic Steenrod
algebra at the prime $2$.  
Our goal is to study the tri-graded Adams spectral sequence
\[ E_2=\Ext_\sA^{*,*,*}(\M_2,\M_2) \Rightarrow \hat\pi_{*,*},
\]
where $\hat\pi_{*,*}$ represents the stable motivic
homotopy groups of a completion of the motivic sphere spectrum over $\R$.
Specifically, in a range of dimensions we
\begin{enumerate}[(1)]
\item Compute the $\Ext$ groups appearing in the $E_2$-page
of the motivic Adams spectral sequence over $\R$.
\item Analyze all Adams differentials.
\item Reconstruct the groups $\hat\pi_{*,*}$ from their
filtration quotients given by the Adams $E_\infty$-page.
\end{enumerate}
Point (1) is tackled by introducing an auxilliary, purely algebraic
spectral sequence that converges to these $\Ext$ groups.  

To describe our results more specifically we must
 introduce some notation and terminology related to the three
indices in our spectral sequence.  We first have the homological
degree of the $\Ext$ groups, also called the Adams filtration
degree---we label this $f$ and simply call it the {\it filtration\/}.
We then have the internal bidegree $(t,w)$ for $\sA$-modules, where
$t$ is the usual {\it topological degree\/} and $w$ is the {\it
weight\/}.  We introduce the grading $s=t-f$ and call this the {\it
topological stem\/}, or just the {\it stem\/}.  The triple $(s,f,w)$
of stem, filtration, and weight will be our main index of
reference.  Using these variables, the motivic Adams spectral sequence 
can be written
\[ E_2=\Ext_\sA^{s,f,w}(\M_2,\M_2) \Rightarrow \hat\pi_{s,w}.
\]

\vspace{0.1in}

Morel has computed
\cite{Morel05}
that $\hat\pi_{s,w}=0$ for $s<w$ 
(in fact, this is true integrally before completion).
Write
$\Pi_0=\oplus_n \pi_{n,n}$, considered as a $\Z$-graded ring.  This
is called the {\it Milnor-Witt
ring\/}, and Morel has given a complete description of this via
generators and relations \cite{Morel04b}.
It is convenent to set $\Pi_k=\oplus_n
\hat\pi_{n+k,n}$, as this is a $\Z$-graded module over $\Pi_0$.  We
call $\Pi_k$ the completed \dfn{Milnor-Witt $k$-stem}.  Related to this, 
the group $\hat\pi_{s,w}$ has  {\it Milnor-Witt degree\/} $s-w$.    

The completed Milnor-Witt ring $\Pi_0$ is equal to 
\[ 
\Z_2[\rho,\eta]/(\eta^2\rho + 2\eta, \rho^2\eta+2\rho),
\]
where $\eta$ has degree $(1,1)$ and $\rho$ has degree $(-1,-1)$.  
Note that $\Pi_0$ is the 2-completion of the Milnor-Witt ring of $\R$
described by Morel \cite{Morel04b}.

We have found that 
the analysis of the motivic Ext groups over $\R$, and of the Adams spectral
sequence, is most conveniently 
done with respect to the Milnor-Witt degree.  In
this paper we focus only on the range $s-w\leq 3$, leading to an
analysis of the Milnor-Witt stems $\Pi_1$,
$\Pi_2$, and $\Pi_3$.  The restriction to $s-w\leq 3$ is done for
didactic purposes; our methods can be applied to cover a much greater
range, but at the expense of more laborious computation.  The focus on
$s-w\leq 3$ allows us to demonstrate the methods and see examples of the
interesting phenomena, while keeping the intensity of the labor down
to manageable levels.  

\vspace{0.2in}

\subsection{An algebraic spectral sequence for {\mdfn{$\Ext$}}}
The main tool in this paper is the $\rho$-Bockstein spectral sequence
that computes the groups $\Ext_\cA(\M_2,\M_2)$.  This was originally
introduced by Hill \cite{Hill11} and analyzed for 
the subalgebra $\cA(1)$ of $\cA$ generated by $\Sq^1$ and $\Sq^2$.
Most of our hard work
is focused on analyzing the differentials in this spectral sequence,
as well as the hidden extensions encountered when passing from the
$E_\infty$-page to the true $\Ext$ groups.  

Over the ground field $\R$, one has $\M_2=\F_2[\tau,\rho]$
where $\tau$ has bidegree $(0,1)$ and $\rho$ has bidegree $(1,1)$.  In
contrast, over $\C$ one has $\M_2^\C=\F_2[\tau]$.  Write $\sA^\C$ for
the motivic Steenrod algebra over $\C$.  The groups 
$\Ext_{\sA^\C}^{s,f,w}(\M_2^\C,\M_2^\C)$ were computed for
$s\leq 34$ in \cite{DI10}, and then for $s\leq 70$ in \cite{Isaksen14c}.  
The $\rho$-Bockstein spectral sequence takes these groups as input,
having the form
\begin{myequation}
\label{eq:rho-Bockstein} 
E_1=\Ext_{\sA^\C}(\M_2^\C,\M_2^\C)[\rho] \Rightarrow
\Ext_\sA(\M_2,\M_2).
\end{myequation}
The differentials in this spectral sequence are extensive.  
However, in a large range they can be completely analyzed by
a method we describe next.

As an $\F_2[\rho]$-module,
$\Ext_\sA(\M_2,\M_2)$ 
splits as a summand of $\rho$-torsion modules and
$\rho$-non-torsion modules; we call the latter {\it $\rho$-local\/}
modules for short.  The first step in our work is to analyze the
$\rho$-local part of the $\Ext$ groups, and this turns out to have a
remarkably simple answer.  We prove that
\[ 
\Ext_\sA(\M_2,\M_2) [\rho^{-1}] \iso \Ext_{\sA_{cl}}(\F_2,\F_2)[\rho^{\pm 1}]
\]
where $\sA_{cl}$ is the classical Steenrod algebra at the prime $2$.  
The isomorphism is highly structured, in the sense that it is
compatible with all products and Massey products, and 
the element $h_i$ in $\Ext_{\sA_{cl}}(\F_2,\F_2)$ corresponds to the element
$h_{i+1}$
in $\Ext_{\sA}(\M_2,\M_2)[\rho^{-1}]$
for every $i\geq 0$.
In other words, the motivic
$\Ext$ groups $\Ext_{\sA}(\M_2,\M_2)$ have a shifted copy of
$\Ext_{\sA_{\cl}}(\F_2,\F_2)$ sitting inside them as the $\rho$-local part.

It turns out that through a large range of dimensions,
there is only one pattern of $\rho$-Bockstein differentials that is consistent
with the $\rho$-local calculation described in the previous paragraph.
This is what allows the analysis of the $\rho$-Bockstein
spectral sequence (\ref{eq:rho-Bockstein}).  It is not so easy to
organize this calculation: the tri-graded nature of the
spectral sequence, coupled with a fairly irregular pattern of
differentials, makes it close to impossible to depict the spectral
sequence via the usual charts.  We analyze what is happening via a
collection of charts and tables, but mostly focusing on the tables.  
A large portion of the present paper is devoted to explaining how to
navigate this computation.  

Figure \ref{fig:ExtR} shows the result
(all figures are contained in Section \ref{sctn:chart}).
This figure displays $\Ext_\cA(\M_2,\M_2)$ through Milnor-Witt degree 4.
Our computations agree with machine computations carried out by
Glen Wilson and Knight Fu \cite{WF}.

\subsection{Adams differentials}
Once we have computed $\Ext_\sA(\M_2,\M_2)$, the next step is the
analysis of Adams differentials.  Identifying even {\it possible\/}
differentials is again hampered by the
tri-graded nature of the situation, but we explain the calculus
that allows one to accomplish this---it is not as easy as it is for
the classical Adams spectral sequence, but it is at least mechanical.
In the range $s-w\leq 3$ there are only a few possible
differentials for degree reasons.  We show via
some Toda bracket arguments that in fact all of the differentials
are zero.

\subsection{Milnor-Witt modules}

After analyzing Adams differentials, we obtain the Adams $E_\infty$-page,
which is an associated graded object of the motivic stable 
homotopy groups over $\R$.  We convert the associated graded information
into the structure of the Milnor-Witt modules $\Pi_1$,
$\Pi_2$, and $\Pi_3$, as modules over $\Pi_0$.
We must be wary of extensions that are hidden by the 
Adams spectral sequence, but these turn out to be manageable.

Figure \ref{fig:MW-module} describes the results of this process.
We draw attention to a curious phenomenon in the 7-stem of
$\Pi_3$.  Here we see that the third Hopf map $\sigma$ has order 32,
not order 16.  This indicates that the motivic image of $J$
is not the same as the classical image of $J$. 
This unexpected behavior suggests that the theory 
of motivic (and perhaps equivariant)
$v_1$-self maps is not what one might expect.
These phenomena deserve more study.

We also observe that the 1-stem of $\Pi_1$ is consistent
with Morel's conjecture on the structure of
$\pi_{1,0}$.  (See \cite{OO14}*{p.\ 98} for a clearly stated
version of the conjecture.)

Unsurprisingly, our calculations are similar to calculations
of $\Z/2$-equivariant stable homotopy groups \cite{AI}.
There is a realization functor from motivic homotopy theory
over $\R$ to $\Z/2$-equivariant homotopy theory, and this
functor induces an isomorphism in stable homotopy groups in a range.
We will return to this comparison in future work.

\subsection{Other base fields}

Although we only work with the base field $\R$ in this article,
the phenomena that we study most likely occur for other base fields as well.
This is especially true for fields $k$ that are similar to $\R$,
such as fields that have an embedding into $\R$.

One might use our calculations to speculate on the
structure of $\Pi_1$, $\Pi_2$, and $\Pi_3$ for arbitrary base fields.
We leave this to the imagination of the reader.

\subsection{Organization of the paper} 

We begin in Section \ref{sctn:back} with a brief reminder of 
the motivic Steenrod algebra and the motivic Adams spectral sequence.
We construct the $\rho$-Bockstein spectral sequence
in Section \ref{sctn:p-Bockstein:generalities}, and 
we perform some preliminary calculations.
In Section \ref{sctn:p-local}, we consider the effect of inverting $\rho$.
Then we return in Section \ref{sctn:Bockstein-analysis} to a detailed
analysis of the $\rho$-Bockstein spectral sequence.  
We resolve extensions that are hidden in the $\rho$-Bockstein spectral
sequence in Section \ref{sctn:ExtR}, and obtain a description
of $\Ext_{\cA}(\M_2, \M_2)$.
We show that there are no Adams differentials in Section \ref{sctn:Adams}.
In Section \ref{sctn:MW-module}, we convert the associated graded information
of the Adams spectral sequence into explicit descriptions of 
Milnor-Witt modules.
Sections \ref{sctn:table} and \ref{sctn:chart}
contain the tables and charts required to carry out our detailed
computations.  We have collected this essential information in one place for the
convenience of readers who are seeking specific computational facts.

\subsection{Notation}

For the reader's convenience, we provide a table of notation to be used
later.

\begin{enumerate}
\item
$\M_2 = \F_2 [\tau, \rho]$ is the motivic $\F_2$-cohomology ring of $\R$.
\item
$\M_2^\C = \F_2 [\tau$ is the motivic $\F_2$-cohomology ring of $\C$.
\item
$\sA$ is the motivic Steenrod algebra over $\R$ at the prime $2$.
\item
$\sA_*$ is the dual motivic Steenrod alegbra over $\R$ at the prime $2$.
\item
$\sA^\C$ is the motivic Steenrod algebra over $\C$ at the prime $2$.
\item
$\sA_{\cl}$ is the classical Steenrod algebra at the prime $2$.
\item
$\Ext$ or $\Ext_\R$ is the cohomology of $\sA$, i.e.,
$\Ext_{\sA}(\M_2,\M_2)$.
\item
$\Ext_\C$ is the cohomology of $\sA^\C$, i.e.,
$\Ext_{\sA^\C}(\M_2^\C,\M_2^\C)$.
\item
$\Ext_{\cl}$ is the cohomology of $\sA^{\cl}$, i.e.,
$\Ext_{\sA^{\cl}}(\F_2,\F_2)$.
\item
$\hat\pi_{*,*}$ is the bigraded stable homotopy ring of the 
completion of the motivic sphere spectrum over $\R$ with respect
to the motivic Eilenberg-Mac Lane spectrum $H \F_2$.
\item
$\Pi_k = \oplus_n \hat\pi_{n+k,k}$
is the $k$th completed Milnor-Witt stem over $\R$.
\end{enumerate}

\section{Background}
\label{sctn:back}

This section establishes the basic setting and notation that will be
assumed throughout the paper.

Write
$\M_2=H^{*,*}(\R;\F_2)$ for the (bigraded) motivic cohomology
ring of $\R$.  We use the usual motivic bigrading where the first
index is the topological dimension and the second index is the weight.
Recall that $\M_2$ 
is equal to $\F_2[\tau, \rho]$, where $\tau$ has degree $(0,1)$
and $\rho$ has degree $(1,1)$. 
The class $\rho$ is the element $[-1]$ under the
standard isomorphism $\M_2^{1,1}\iso F^*/(F^*)^2$, and $\tau$ is the unique
element such that $\Sq^1(\tau) = \rho$.

Let $\cA_*$ denote the dual motivic Steenrod
algebra over $\R$.  The pair $(\M_2,\cA_*)$ is a Hopf algebroid; recall 
from \cite{Voevodsky03a} (see also \cite{Borghesi07})
that this structure is described by
\begin{align*}
& \cA_* = \M_2 [ \tau_0, \tau_1, \ldots, \xi_0, \xi_1, \ldots ] /
  ( \xi_0 = 1, 
  \tau_k^2 = \tau \xi_{k+1} + \rho \tau_{k+1} + \rho \tau_0 \xi_{k+1}) \\
& \eta_L(\tau) = \tau,\quad
 \eta_R(\tau) = \tau + \rho \tau_0,\quad \eta_L(\rho)=\eta_R(\rho)=\rho \\
& \Delta( \tau_k ) = \tau_k \otimes 1 +
  \sum \xi_{k-i}^{2^i} \otimes \tau_i \\
& \Delta( \xi_k ) = \sum \xi_{k-i}^{2^i} \otimes \xi_i.
\end{align*}
The Hopf algebroid axioms force $\Delta(\tau)=\tau\tens 1$ and
$\Delta(\rho)=\rho\tens 1$, but it is useful to record these for
reference.
The dual $\cA_*$ is homologically graded, so 
$\tau$ has degree $(0,-1)$ and $\rho$ has degree $(-1,-1)$.
Moreover, 
$\tau_k$ has degree $(2^{i+1} - 1, 2^i - 1)$
and $\xi_k$ has degree $(2^{i+1} - 2, 2^i - 1)$.

The groups $\Ext_{\cA_*}(\M_2,\M_2)$ are trigraded.  There is the
homological degree $f$ (the degree on the $\Ext$) and the internal
bidegree $(p,q)$ of $\cA_*$-comodules.  The symbol $f$ comes from
`filtration', as this index coincides with the Adams filtration in the
Adams spectral sequence.  Classical notation would write $\Ext^{f,(p,q)}$ for the
corresponding homogeneous piece of the $\Ext$ group.  In the Adams
spectral sequence this $\Ext$ group contribues to $\pi_{p-f,q}$.  We
call $p-f$ the \dfn{stem} and will usually denote it by $s$.  It turns
out to be more
convenient to use the indices $(s,f,w)$ of stem, filtration, and
weight rather than $(f,p,q)$.  So we will write $\Ext^{s,f,w}$ for the
group that would classically be denoted $\Ext^{f,(s+f,w)}$.  This
works very well in practice; in
particular, when we draw charts, the group $\Ext^{s,f,w}$ will be 
located at Cartesian coordinates $(s,f)$.

The motivic Adams spectral sequence takes the form
\[ E_2=\Ext^{s,f,w}_{\cA_*}(\M_2,\M_2) \Rightarrow
\hat\pi_{s,w},
\]
with $d_r\colon \Ext^{s,f,w}\ra \Ext^{s-1,f+r,w}$.  Here
$\hat\pi_{*,*}$ is the stable motivic homotopy ring of the 
completion of the motivic sphere spectrum with respect to 
the motivic Eilenberg-Mac Lane spectrum $H \F_2$.
(According to \cite{HKO11a},
this completion is also the $2$-completion of the motivic
sphere spectrum, but this is not essential for our calculations.)

Our methods also require us to consider the motivic cohomology
of $\C$ and the motivic Steenrod algebra over $\C$.
We write $\M_2^\C$ and $\sA^\C$ for these objects.
They are obtained from $\M_2$ and $\sA$ by setting $\rho$ equal to zero.
More explicitly, $\M_2^\C$ equals $\F_2[\tau]$,
and the dual motivic Steenrod alegbra over $\C$ has relations of the
form $\tau_k^2 = \tau \xi_{k+1}$.

We will also
abbreviate
\[ \Ext_\R=\Ext_{\cA_*}(\M_2,\M_2), \qquad
\Ext_\C=\Ext_{\cA_*^\C}(\M_2^\C,\M_2^\C).
\]

\subsection{Milnor-Witt degree}
Given a class with an associated stem $s$ and weight $w$, we call $s-w$ the
\dfn{Milnor-Witt degree} of the class.  The terminology comes from the
fact that the elements of Milnor-Witt degree zero in the motivic
stable homotopy ring constitute Morel's Milnor-Witt
$K$-theory ring.  More generally, the elements of Milnor-Witt degree
$r$ in $\hat\pi_{*,*}$ form a module over (2-completed) Milnor-Witt
$K$-theory.

Many of the calculations in this paper are handled by breaking things
up into the homogeneous Milnor-Witt components.  The following lemma
about $\Ext_\C$ will be particularly useful.

\begin{lemma}
\label{lem:MW-bound}
Let $x$ be a non-zero class in $\Ext^{s,f,w}_{\C}$ with
Milnor-Witt degree $t$.
Then $f \geq s - 2t$.
\end{lemma}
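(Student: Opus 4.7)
The plan is to establish the inequality at the level of the cobar complex computing $\Ext_{\C}$. The key numerical input is that in the homological bigrading on $\sA_{*}^{\C}$, every $\F_{2}$-algebra generator has bidegree $(p,q)$ satisfying $p - 2q \geq 0$.

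First I would tabulate the relevant bidegrees: the coefficient $\tau$ sits in $(0,-1)$ with $p - 2q = 2$; each $\tau_{k}$ sits in $(2^{k+1}-1,\, 2^{k}-1)$ with $p - 2q = 1$; and each $\xi_{k}$ (for $k \geq 1$) sits in $(2^{k+1}-2,\, 2^{k}-1)$ with $p - 2q = 0$. Since $p$ and $q$ are both additive under multiplication, every monomial in these generators satisfies $p - 2q \geq 0$, and consequently every nonzero element of $\sA_{*}^{\C}$ of homogeneous bidegree $(p,q)$ satisfies this inequality.

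Next I would transfer the bound to the reduced cobar complex $C^{f}$, namely the $f$-fold tensor product over $\M_{2}^{\C}$ of the augmentation ideal $\overline{\sA_{*}^{\C}}$. Any homogeneous element of $C^{f}$ in bidegree $(p,q)$ is a sum of tensor monomials $\mu_{1} \otimes \cdots \otimes \mu_{f}$, each of total bidegree $\sum_{i}(p_{i},q_{i})$. Applying the previous step to each factor gives $p_{i} - 2q_{i} \geq 0$, so adding yields $p - 2q = \sum_{i}(p_{i} - 2q_{i}) \geq 0$ on every tensor monomial, and hence on the whole element.

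Finally, a nonzero class in $\Ext_{\C}^{s,f,w}$ is represented by a nonzero cocycle of $C^{f}$ in bidegree $(p,q) = (s+f,w)$, so the cobar bound gives $s + f \geq 2w$, equivalently $f \geq 2w - s = s - 2(s-w) = s - 2t$. The main bookkeeping point, and essentially the only place where this argument could go off the rails, is to remember that we are working in the homological grading on $\sA_{*}^{\C}$, in which $\tau$ carries weight $-1$; if one slipped into cohomological sign conventions for $\tau$, the bound would already fail at the cobar level for the class $\tau \in \Ext_{\C}^{0,0,-1}$. Beyond this sign check, the argument is a purely additive verification on the generators.
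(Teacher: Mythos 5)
Your proof is correct, and it takes a genuinely different route from the paper. The paper establishes the inequality on the $E_1$-page of the motivic May spectral sequence: each May generator $h_{ij}$ is checked to satisfy $s + f - 2w \geq 0$, the inequality passes to all products, and then to $\Ext_\C$ as a subquotient of $E_1$. You instead verify the bound one level earlier, directly on the cobar complex: every $\F_2$-algebra generator of $\sA_*^\C$ (including the coefficient $\tau$, with homological bidegree $(0,-1)$, so $p - 2q = 2$) satisfies $p - 2q \geq 0$, this is additive under both multiplication in $\sA_*^\C$ and tensoring in the cobar complex, and a nonzero class in $\Ext_\C^{s,f,w}$ is represented by a nonzero cocycle of internal bidegree $(p,q) = (s+f, w)$, giving $s+f-2w \geq 0$. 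The two arguments are closely related --- the May $E_1$-page is an associated graded of the cobar complex, and the bidegrees match under that passage --- but yours is more elementary in that it requires no knowledge of the May spectral sequence at all, just the generators of $\sA_*^\C$ and their degrees. The only delicate point you need (and correctly flag) is the homological sign convention on $\tau$; with the cohomological convention the inequality would be violated already by $\tau \in \Ext_\C^{0,0,-1}$.
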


\begin{proof}
The motivic May spectral sequence \cite{DI10} has $E_1$-page generated by classes
$h_{ij}$, and converges to $\Ext^\C$.  
All of the classes $h_{ij}$ are readily checked to satisfy the
inequality $s+f-2w\geq 0$, and this extends to all products. 

This inequality is the same as 
$f \geq s - 2(s-w)$, and $t$ equals $s-w$ by definition.
\end{proof}

In practice, Lemma \ref{lem:MW-bound} tells us where to look for elements
of $\Ext_\C$ in a given Milnor-Witt degree.  All such elements lie
above a line of slope 1 on an Adams chart.


\section{The $\rho$-Bockstein spectral sequence}
\label{sctn:p-Bockstein:generalities}

Our aim is to compute $\Ext_{\cA}(\M_2,\M_2)$.  What makes this
calculation difficult is the presence of $\rho$.  If one formally sets
$\rho=0$ then the formulas become simpler and the calculations more
manageable; this is essentially the case that was handled in
\cite{DI10} and \cite{Isaksen14c}.  
Following ideas of Hill \cite{Hill11}, we use an algebraic
spectral sequence for building up the general calculation from the
simpler one where $\rho=0$.  This section sets up the spectral
sequence and establishes some basic properties.

\vspace{0.2in}

Let $\cobar$ be the (unreduced) cobar complex for the Hopf algebroid
$(\M_2,\cA_*)$.  Recall that this is the cochain complex associated to
the cosimplicial ring
\[ \xymatrix{
\M_2\ar@<0.5ex>[r]^{\eta_R}\ar@<-0.5ex>[r]_{\eta_L}
& \cA_* \ar@<0.7ex>[r] \ar[r] \ar@<-0.7ex>[r]   
& \cA_* \tens_{\M_2} \cA_*
\ar@<0.8ex>[r]
\ar@<-0.8ex>[r]
\ar@<0.3ex>[r]
\ar@<-0.3ex>[r]
& \cA_* \tens_{\M_2} \cA_* \tens_{\M_2} \cA_*
\cdots 
}
\]
by taking $d_\cobar$ to be the alternating sum of the coface maps.  
For $u$ an  $r$-fold tensor, one has $d^0(u)=1\tens u$, $d^{r+1}(u)=u\tens
1$, and $d^i(u)$ applies the diagonal of $\cA_*$ to the $i$th tensor
factor of $u$. For $u$ in $\M_2$ (i.e., a $0$-fold tensor),
one has $d^0(u)=\eta_R(u)$ and $d^1(u)=\eta_L(u)$.  

The pair
$(\cobar,d_\cobar)$ is a differential graded algebra.  As usual, we will denote
$r$-fold tensors via the bar notation $[x_1|x_2|\cdots|x_r]$.  

The element $\xi_1^{2^k}$ is primitive in $\sA_*$ for any $k$ because
$\xi_1$ is primitive.
Hence $[\xi_1^{2^k}]$ is a cycle in the cobar complex that is 
denoted $h_{k+1}$.
Likewise, $\tau_0$ is primitive, and
the cycle $[\tau_0]$ is denoted $h_0$.   

The maps $\eta_L$, $\eta_R$, and $\Delta$ all fix $\rho$, and this
implies that all the coface maps are $\rho$-linear.  The filtration
\[ \cobar \supseteq \rho\cobar \supseteq \rho^2\cobar \supseteq \cdots
\]
is therefore a filtration of chain complexes.  The associated spectral
sequence is called the \mdfn{$\rho$-Bockstein spectral sequence}. 

The $\rho$-Bockstein spectral sequence has the form
\[ E_1=\Ext_{\Gr_\rho \cA}(\Gr_\rho \M_2,\Gr_\rho \M_2)
\Rightarrow \Ext_{\cA}(\M_2,\M_2),
\]
where $\Gr_\rho$ refers to the associated graded of the filtration
by powers of $\rho$.
Since $\M_2=\F_2[\tau,\rho]$, we have $\Gr_\rho \M_2\iso \M_2$.
Similarly, it follows easily that there is an isomorphism of Hopf
algebroids
\[ (\Gr_\rho \M_2,\Gr_\rho \cA)\iso (\M_2^\C,\cA^{\C})\tens_{\F_2}
\F_2[\rho],
\]
where $\M_2^\C = \F_2[\tau]$ is the motivic cohomology ring of $\C$.
The point here is that after taking associated gradeds, the formulas
for $\eta_L$ and $\eta_R$ both fix $\tau$, whereas the formulas for
$\Delta$ are unchanged; and all of this  exactly matches the formulas for
$\sA^\C$.   Tensoring with $\F_2[\rho]$ commutes with $\Ext$, 
and so our $\rho$-Bockstein spectral sequence takes the form
\[ E_1=\Ext_{\cA^\C}(\M_2^\C,\M_2^\C) [\rho]
\Rightarrow \Ext_{\cA}(\M_2,\M_2).
\]
It will be convenient to denote
$\Ext_{\cA^\C}(\M_2^\C,\M_2^\C)$ simply by
$\Ext_\C$.  
 
We observe two general properties of the $\rho$-Bockstein spectral sequence.
First, the element $\rho$ is a permanent cycle because $\rho$ supports no
Steenrod operations.
Second, the spectral sequence is multiplicative, so the Leibniz rule
can be used effectively to compute differentials on decomposable elements.

\begin{remark}
\label{rem:rho-ss}
Here is a method for deducing $\rho$-Bockstein differentials
from explicit cobar calculations.
Let $u$ be an element in $\cobar$, and assume that
$u$ is not a multiple of $\rho$.
If possible, write
$d_\cobar(u) = \rho d_\cobar(u_1) + \rho^2 v_2$, where 
$u_1$ has a tensor expression that does not involve $\rho$;
then the $\rho$-Bockstein differential $d_1(u)$ is zero.
Otherwise,  $d_1(u)$ equals $d_\cobar(u)$ modulo $\rho^2$.

If $d_1(u)$ is zero, then if possible write
$d_\cobar(u) = \rho d_\cobar(u_1) + \rho^2 d_\cobar(u_2) + \rho^3 v_3$,
where $u_2$ has a tensor expression that does not involve $\rho$;
then  $d_2(u)$ is zero.
Otherwise, $d_2(u)$ equals $\rho^2 v_2$ modulo $\rho^3$.

Inductively, assume that 
\[
d_\cobar(u) = \rho d_\cobar(u_1) + \cdots + \rho^{r-1} d_\cobar(u_{r-1} ) +
\rho^r v_r,
\]
where
each $u_i$ has a tensor expression that does not involve $\rho$.
If possible, write $v_r = d_\cobar(u_r) + \rho v_{r+1}$,
where
$u_r$ has a tensor expression that does not involve $\rho$;
then $d_r(u)$ is zero.
Otherwise, $d_r(u)$ equals $\rho^r v_r$ modulo $\rho^{r+1}$.
\end{remark}

The method described in 
Remark~\ref{rem:rho-ss} is mostly not needed in our analysis; in
fact, we will eventually show how to deduce a most of the differentials in
a large range of the spectral sequence by a completely mechanical process.
Still, it
is often useful to understand that the $\rho$-Bockstein spectral
sequence is all about computing $\rho$-truncations of differentials in
$\cobar$.  Proposition \ref{prop:diffs-cobar} and Example \ref{ex:d}
illustrate this technique.

\begin{prop}
\label{prop:diffs-cobar}
\mbox{}
\begin{enumerate}
\item
$d_1(\tau)=\rho h_0$.
\item
$d_{2^k}(\tau^{2^k})=\rho^{2^{k}}\tau^{2^{k-1}} h_{k}$ for $k \geq 1$.
\end{enumerate}
\end{prop}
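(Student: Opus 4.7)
My plan is to deduce both differentials directly from the cobar differential $d_\cobar$ via the truncation procedure of Remark~\ref{rem:rho-ss}, which identifies a $d_r$-differential with the $\rho^r$-reduction of $d_\cobar$ (modulo $d_\cobar$-boundaries of $\rho$-free cochains at lower filtrations).

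For (1), the class $\tau$ lifts to itself as a $0$-cochain in $\cobar$, and
\[
d_\cobar(\tau) = \eta_R(\tau) - \eta_L(\tau) = (\tau + \rho \tau_0) - \tau = \rho [\tau_0] = \rho h_0.
\]
This lies in $\rho\cobar$ but not $\rho^2\cobar$, so $d_1(\tau) = \rho h_0$ immediately.

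For (2), the key input is Frobenius in characteristic $2$: because $\eta_L$ and $\eta_R$ are ring maps,
\[
d_\cobar(\tau^{2^k}) = \eta_R(\tau)^{2^k} - \eta_L(\tau)^{2^k} = (\tau + \rho\tau_0)^{2^k} - \tau^{2^k} = \rho^{2^k}\tau_0^{2^k}.
\]
This already lies in $\rho^{2^k}\cobar$, so $\tau^{2^k}$ is trivially a $d_r$-cycle for every $r < 2^k$, and its image on $E_{2^k}$ supports the differential recorded by the reduction of $\rho^{2^k}\tau_0^{2^k}$ modulo $\rho^{2^k+1}$. To carry out that reduction I apply Frobenius to the Milnor relation $\tau_0^2 = \tau\xi_1 + \rho\tau_1 + \rho\tau_0\xi_1$, obtaining
\[
\tau_0^{2^k} = (\tau_0^2)^{2^{k-1}} = \tau^{2^{k-1}}\xi_1^{2^{k-1}} + \rho^{2^{k-1}}\bigl(\tau_1^{2^{k-1}} + \tau_0^{2^{k-1}}\xi_1^{2^{k-1}}\bigr),
\]
which, for any $k\geq 1$, is congruent to $\tau^{2^{k-1}}\xi_1^{2^{k-1}}$ modulo $\rho$. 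Since $h_k = [\xi_1^{2^{k-1}}]$, this yields $d_{2^k}(\tau^{2^k}) = \rho^{2^k}\tau^{2^{k-1}}h_k$, as desired.

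The argument is essentially formal; the main point to watch is bookkeeping. The only real computational content is the pair of Frobenius expansions of $(\tau+\rho\tau_0)^{2^k}$ and of $(\tau_0^2)^{2^{k-1}}$, both trivial in characteristic $2$, and the only conceptual care needed is the convention that the $\rho^r$ factor in a $\rho$-Bockstein differential records the filtration jump rather than being divided away. No more sophisticated ingredient — in particular, no analysis of lower-filtration corrections $u_1,\dots,u_{r-1}$ in Remark~\ref{rem:rho-ss} — is required, because $d_\cobar(\tau^{2^k})$ lands in filtration exactly $2^k$ on the nose.
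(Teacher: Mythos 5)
Your proof is correct and follows essentially the same route as the paper's: compute $d_\cobar(\tau^{2^k}) = \rho^{2^k}[\tau_0^{2^k}]$ from $\eta_R - \eta_L$ and the Frobenius, reduce $\tau_0^{2^k}$ modulo $\rho$ via the squared form of the Milnor relation, and invoke Remark~\ref{rem:rho-ss}. The one helpful clarification you add beyond the paper's write-up — that the lower-filtration correction terms $u_1,\dots,u_{2^k-1}$ of Remark~\ref{rem:rho-ss} are vacuous here because $d_\cobar(\tau^{2^k})$ already lies in filtration exactly $2^k$ — is correct and makes the invocation of the remark transparent.
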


Part (2) of Proposition \ref{prop:diffs-cobar} implicitly also means
that $d_r (\tau^{2^k})$ is zero for all $r < 2^k$.

\begin{proof}
Note that $d_\cobar(x)=\eta_R(x)-\eta_L(x)$ for $x$ in $\M_2$.  In
particular,
$d_\cobar(\tau)=[\tau+\rho \tau_0] - [\tau] = \rho[\tau_0]=\rho h_0$.
Now use Remark~\ref{rem:rho-ss} to deduce that $d_1(\tau)=\rho h_0$.

Next we analyze $d_\cobar(\tau^{2^k})$.  Start with
\[
d_{\cobar}(\tau^{2^k})=
\eta_R(\tau^{2^k})-\eta_L(\tau^{2^k})
= [(\tau +\rho \tau_0 )^{2^k}] - [\tau^{2^k}] =
\rho^{2^k}[\tau_0^{2^k}].
\]
Recall that $\tau_0^2=\tau\xi_1 + \rho\tau_1 + \rho\tau_0\xi_1$ in $\cA_*$, and
so
$\tau_0^{2^k}=\tau^{2^{k-1}}\xi_1^{2^{k-1}}$ modulo $\rho^{2^{k-1}}$.
Thus,
$d_{\cobar}(\tau^{2^k})=\rho^{2^k}\tau^{2^{k-1}}[\xi_1^{2^{k-1}}]$
modulo $\rho^{2^k+1}$.
Remark~\ref{rem:rho-ss} implies that
$d_{2^k}(\tau^{2^k}) = \rho^{2^k} \tau^{2^{k-1}} h_k$.
\end{proof}

\begin{example}
\label{ex:d}
We will demonstrate that $d_6(\tau^4 h_1)=\rho^6\tau h_2^2$.
As in the proof of Proposition \ref{prop:diffs-cobar},
$d_\cobar(\tau^4) = \rho^4[(\tau \xi_1 +\rho \tau_1 + \rho \tau_0\xi_1)^2]$.
Use the relations
$\tau_0^2 = \tau \xi_1 + \rho \tau_1 + \rho \tau_0 \xi_1$ and
$\tau_1^2 = \tau \xi_2 + \rho \tau_2 + \rho \tau_0 \xi_2$ to see that this expression equals
$\rho^4 \tau^2 [\xi_1^2] + \rho^6 \tau [\xi_2] + \rho^6 \tau [\xi_1^3]$
modulo $\rho^7$.

Since $h_1=[\xi_1]$ is a cycle, we therefore have
\[ d_\cobar(\tau^4h_1)=\rho^4\tau^2[\xi_1^2|\xi_1] + \rho^6\tau\bigl
([\xi_2|\xi_1]+[\xi_1^3|\xi_1]\bigr )
\]
modulo $\rho^7$.

The coproduct on $\xi_2$ implies that
$d_\cobar([\xi_2]) = [\xi_1^2| \xi_1]$.
We also have that
\[
d_\cobar(\tau^2) = \rho^2 [\tau_0^2] = 
\rho^2 \tau [\xi_1] + \rho^3 [\tau_1] + \rho^2 [\tau_0 \xi_1],
\]
as in the proof of Proposition \ref{prop:diffs-cobar}.
Therefore, the Leibniz rule gives that
\[
d_\cobar(\tau^2 [\xi_2]) = \rho^2\tau
[\xi_1|\xi_2]+\rho^3[\tau_1|\xi_2]+\rho^3[\tau_0\xi_1|\xi_2] + \tau^2[\xi_1^2|\xi_1].
\]

We can now write
\[ d_\cobar(\tau^4h_1)=\rho^4d_\cobar(\tau^2[\xi_2])+\rho^6\tau\bigl (
[\xi_2|\xi_1]+[\xi_1^3|\xi_1]+[\xi_1|\xi_2]
\bigr)
\]
modulo $\rho^7$.
From Remark \ref{rem:rho-ss},
one has $d_i(\tau^4 h_1)=0$ for $i<6$
in the $\rho$-Bockstein spectral sequence, and
$d_6(\tau^4h_1)=\rho^6\tau\bigl (
[\xi_2|\xi_1]+[\xi_1^3|\xi_1]+[\xi_1|\xi_2]\bigr )$. 

Finally, the coproduct in $\cA_*$ implies that 
\[ d_\cobar([\xi_2\xi_1])=[\xi_1^3|\xi_1]+[\xi_1|\xi_2]
+[\xi_2|\xi_1]+[\xi_1^2|\xi_1^2].
\]
This shows that $[\xi_2|\xi_1]+[\xi_1^3|\xi_1]+[\xi_1|\xi_2]=h_2^2$ in
$\Ext$.  
\end{example}

The long analysis in Example~\ref{ex:d} 
demonstrates that direct work with the cobar complex is not practical.
Instead, we will use some clever tricks that take
advantage of various algebraic structures.  But it is useful to
remember what is going on behind the scenes: these computations of
differentials are always giving us clues about the cobar differential
$d_\cobar$.

The following two results are useful in analyzing $\rho$-Bockstein
differentials.

\begin{lemma}
\label{lem:differential-torsion}
If $d_r(x)$ is non-trivial in the $\rho$-Bockstein spectral sequence, 
then $x$ and $d_r(x)$ are both $\rho$-torsion free on the $E_r$-page.
\end{lemma}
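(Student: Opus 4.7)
My plan is to prove the contrapositive at the level of cobar representatives. The statement is vacuous for $r=1$ because $E_1 = \Ext_\C[\rho]$ is a polynomial ring in $\rho$ and hence has no $\rho$-torsion, so I assume $r \geq 2$ throughout. I parameterize a class $[y] \in E_r^s$ by a cobar representative $\rho^s \upsilon$ with $\upsilon \in \cobar$ not divisible by $\rho$. Surviving to $E_r$ forces $d_\cobar(\upsilon) = \rho^r \beta$ for some $\beta \in \cobar$, so $d_r([y]) = [\rho^{s+r}\beta]$; moreover $d_\cobar(\beta) = 0$ because $d_\cobar^2 = 0$ and $\cobar$ is $\rho$-torsion-free.

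The technical core is a single manipulation. The assumption $\rho^k [y] = 0$ on $E_r$ unpacks into a cobar identity
\[ \rho^{s+k}\upsilon = d_\cobar(w) + z, \qquad w \in \rho^{s+k-r+1}\cobar, \quad z \in \rho^{s+k+1}\cobar. \]
Writing $w = \rho^{s+k-r+1}\omega$ and matching $\rho$-divisibilities, I get $d_\cobar(\omega) = \rho^{r-1}\alpha$ and $\upsilon \equiv \alpha \pmod{\rho}$ for some $\alpha \in \cobar$. Since $r \geq 2$, the relation $\rho^{r-1}d_\cobar(\alpha) = d_\cobar^2(\omega) = 0$ together with $\rho$-torsion-freeness forces $d_\cobar(\alpha) = 0$. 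Writing $\upsilon = \alpha + \rho\gamma$ and substituting into $d_\cobar(\upsilon) = \rho^r \beta$ then yields $d_\cobar(\gamma) = \rho^{r-1}\beta$.

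For part (a), I apply this to $[y] = [x]$. Since $d_\cobar(\rho^{s+1}\gamma) = \rho^{s+r}\beta$, the element $\rho^{s+1}\gamma$ represents a class on $E_{r-1}^{s+1}$ with $d_{r-1}([\rho^{s+1}\gamma]) = [\rho^{s+r}\beta] = d_r([x])$; hence $d_r([x])$ is a $d_{r-1}$-boundary and vanishes on $E_r$, contradicting the hypothesis. For part (b), I apply the same manipulation to $u = d_r([x])$, whose cobar representative is $\rho^{s+r}\beta$ with $\beta$ already a cocycle. The output produces $\omega'$ with $d_\cobar(\omega') = \rho^{r-1}\alpha'$ and $\beta \equiv \alpha' \pmod{\rho}$, and then $u = [\rho^{s+r}\beta] = [\rho^{s+r}\alpha'] = d_{r-1}([\rho^{s+1}\omega'])$ on $E_{r-1}$ forces $u = 0$ on $E_r$, contradicting $u \neq 0$.

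The main obstacle I foresee is careful bookkeeping of $\rho$-filtrations: I need to verify that intermediate expressions like $\rho^{s+1}\gamma$ and $\rho^{s+1}\omega'$ lie in non-negative filtration (they do, since $s \geq 0$) and genuinely define classes on the $E_{r-1}$-page whose $d_{r-1}$-images land where claimed. The essential input throughout is that $\cobar$ is $\rho$-torsion-free as an $\M_2$-module, which is what promotes each "mod $\rho$" congruence to an honest equation in $\cobar$ and allows the bootstrapping between pages.
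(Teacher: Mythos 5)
Your proof is correct and takes a genuinely different route from the paper's, so a brief comparison is worthwhile. The paper's argument rests on a single numerical claim: a nonzero class $y$ on the $E_r$-page is $\rho$-torsion if and only if $\rho^{r-1}y = 0$, justified in one sentence by noting that $d_s$ for $s < r$ raises $\rho$-filtration by $s$. Writing $d_r(x) = \rho^r y$ then makes both conclusions formal: $\rho^r y \neq 0$ forces $\rho^{r-1} y \neq 0$, so $y$ is $\rho$-torsion free, so $d_r(x) = \rho^r y$ is, and so is $x$ by $\rho$-linearity of $d_r$. You bypass this intermediate claim entirely and instead run two parallel cobar-level manipulations, one applied to $x$ and one to $d_r(x)$, showing in each case that the representative $\rho^{s+r}\beta$ is already a $d_{r-1}$-boundary. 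Your route is more explicit and avoids the (easily verified but unstated) surjectivity of $\rho^r$ on $E_r$ implicit in writing $d_r(x) = \rho^r y$; the paper's route is shorter and, as a bonus, isolates a reusable statement about $\rho$-torsion orders on $E_r$ that underlies the $x\,(\rho^k)$ bookkeeping in Tables \ref{tab:rho-Bock-Einfty} and \ref{tab:ExtR-gen}. One small imprecision to patch: the expression $\rho^{s+k-r+1}\cobar$ is not meaningful when $s+k-r+1 < 0$ (for instance $s=0$, $k=1$, $r \geq 3$). In that case the correct condition is simply $w \in \cobar$ together with $d_\cobar(w) \in \rho^{s+k}\cobar$, and one should factor $d_\cobar(w) = \rho^{s+k}\alpha$ directly rather than first factoring $w$; the downstream identities $d_\cobar(\alpha) = 0$, $\upsilon = \alpha + \rho\gamma$, and $d_\cobar(\gamma) = \rho^{r-1}\beta$ are unchanged, so the argument goes through once this edge case is handled.
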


\begin{proof}
First note that if $y$ is nonzero on the $E_r$-page, then $y$ is
$\rho$-torsion if and only if $\rho^{r-1}y=0$.  The reason is that the
differentials $d_s$ for $s<r$ can only hit $\rho^s$-multiples of $y$.

Now suppose that $d_r(x) = \rho^r y$, where $\rho^r y$ is non-zero on the $E_r$-page.
This immediately forces $y$ to be $\rho$-torsion free.
Since $d_r$ is $\rho$-linear, this implies that
$x$ must also be $\rho$-torsion free on the $E_r$-page.
\end{proof}

%
%

\section{$\rho$-localization}
\label{sctn:p-local}

The analysis of the $\rho$-Bockstein spectral sequence is best broken
up into two pieces.
There are a large number of $\rho$-torsion
classes in the $E_\infty$-page.
If one throws away all of this $\rho$-torsion, then
the end result turns out to be fairly simple.  In this section we
compute this simple piece of $\Ext_\R$.  
More precisely, we will consider the $\rho$-localization
$\Ext_\R[\rho^{-1}]$ of $\Ext^\R$.
Inverting $\rho$ annihilates all of the $\rho$-torsion.

\vspace{0.2in}

Let $\cA^{\cl}$ denote the classical Steenrod algebra (at the prime
$2$), and write $\Ext_{\cl}=\Ext_{\cA^{\cl}}(\F_2,\F_2)$.  

\begin{thm}
\label{thm:p-local}
There is an isomorphism from
$\Ext_{\cl} [\rho^{\pm 1}]$ to 
$\Ext_\R[\rho^{-1}]$ such that:
\begin{enumerate}
\item
The isomorphism is highly structured, i.e., preserves products, Massey products,
and algebraic squaring operations in the sense of \cite{May70}.
\item
The element $h_n$ of $\Ext_{\cl}$ corresponds to the element
$h_{n+1}$ of $\Ext_\R$.
\item
An element in $\Ext_{\cl}$ of degree $(s,f)$ corresponds to
an element in $\Ext_\R$ of degree $(2s+f, f, s+f)$.
\end{enumerate}
\end{thm}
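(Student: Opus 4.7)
The plan is to identify the $\rho$-localized Hopf algebroid $(\M_2[\rho^{-1}], \cA_*[\rho^{-1}])$ as a faithfully flat base change of a Hopf algebra modeled on the classical dual Steenrod algebra, and then invoke the invariance of $\Ext$ under such base changes.

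First I would simplify $\cA_*[\rho^{-1}]$. After inverting $\rho$, the relation $\tau_k^2 = \tau\xi_{k+1} + \rho\tau_{k+1} + \rho\tau_0\xi_{k+1}$ can be solved for $\tau_{k+1}$, giving $\tau_{k+1} = \rho^{-1}\tau_k^2 + \rho^{-1}\tau\xi_{k+1} + \tau_0\xi_{k+1}$. Recursively, each $\tau_k$ for $k \geq 1$ becomes a polynomial in $\tau_0$ and the $\xi_i$'s, so $\cA_*[\rho^{-1}]$ is the polynomial algebra $\M_2[\rho^{-1}][\tau_0, \xi_1, \xi_2, \ldots]$. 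Next I would introduce the Hopf algebra $\cB = (\F_2[\rho^{\pm 1}], \F_2[\rho^{\pm 1}][\xi_1, \xi_2, \ldots])$ with the coproduct $\Delta(\xi_k) = \sum \xi_{k-i}^{2^i}\otimes\xi_i$. Base-changing $\cB$ along the polynomial (hence faithfully flat) extension $\F_2[\rho^{\pm 1}] \hookrightarrow \M_2[\rho^{-1}]$ produces a Hopf algebroid whose total algebra, after the substitution $\tau_0 := \rho^{-1}(\eta_R(\tau) - \eta_L(\tau))$, agrees with $\cA_*[\rho^{-1}]$ both as an algebra and as a Hopf algebroid. The key identity $\Delta(\tau_0) = \tau_0 \otimes 1 + 1 \otimes \tau_0$ follows from the Hopf-algebroid relation $\eta_R(a) \otimes 1 = 1 \otimes \eta_L(a)$ in $\cA_* \otimes_{\M_2} \cA_*$, and the original formulas for $\Delta(\tau_k)$ with $k \geq 1$ follow from the Leibniz rule applied to the recursion.

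Invoking the invariance of Hopf-algebroid $\Ext$ under faithfully flat base change then yields
$$\Ext_\R[\rho^{-1}] \cong \Ext_\cB(\F_2[\rho^{\pm 1}], \F_2[\rho^{\pm 1}]) \cong \Ext_{\cl}[\rho^{\pm 1}],$$
where the last isomorphism uses that $\F_2[\xi_1, \xi_2, \ldots]$ with the above coproduct is abstractly isomorphic as a Hopf algebra to $\cA^{\cl}_*$. The degree formula reduces to tracking gradings: since classical $\xi_k$ has degree $2^k - 1$ while motivic $\xi_k$ has bidegree $(2^{k+1}-2, 2^k-1)$, a classical monomial of topological degree $t = s + f$ corresponds to a motivic monomial of bidegree $(2t, t)$, yielding motivic tridegree $(2s + f, f, s + f)$. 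In particular, $h_n^{\cl} = [\xi_1^{2^n}]$ corresponds to $h_{n+1}^{\R} = [\xi_1^{2^n}]$. Preservation of products, Massey products, and algebraic squaring operations then follows because the isomorphism is induced by a map of cobar DGAs, and all three structures are defined functorially at the cochain level in May's formalism.

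The main obstacle I anticipate is carefully verifying the base-change identification at the level of Hopf algebroids, in particular checking that the coproducts $\Delta(\tau_k)$ for $k \geq 1$ derived via the Leibniz rule from the recursion indeed reproduce the formula $\Delta(\tau_k) = \tau_k \otimes 1 + \sum \xi_{k-i}^{2^i} \otimes \tau_i$ from the original presentation. This is bookkeeping, but tedious enough that one must proceed inductively on $k$.
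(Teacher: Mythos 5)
Your approach is valid and genuinely different from the paper's. The paper proceeds by first eliminating the redundant generators $\tau_{k+1}$ (as you do), then observing that $\cA_*[\rho^{-1}]$ \emph{splits} as a tensor product $\cA'_* \otimes_{\F_2} \cA''_*$, where $\cA''_*$ is the classical dual Steenrod algebra and $\cA'_*=\M_2[\rho^{-1}][\tau_0]$; after a change of variables $x=\rho\tau_0$, the factor $\cA'_*$ is reduced to Lemma~\ref{lem:A'-lemma}, which is proved by a hands-on filtered cobar calculation. You instead identify the whole localized pair $(\M_2[\rho^{-1}],\cA_*[\rho^{-1}])$ in one step as the Hopf algebroid induced from the Hopf algebra $\bigl(\F_2[\rho^{\pm 1}],\F_2[\rho^{\pm 1}][\xi_1,\xi_2,\ldots]\bigr)$ along the faithfully flat polynomial extension $\F_2[\rho^{\pm 1}]\hookrightarrow \M_2[\rho^{-1}]$, and invoke invariance of $\Ext$ under such inductions. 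This is close in spirit to the conceptual proof the paper sketches only in the \emph{remark} following Lemma~\ref{lem:A'-lemma}, but you apply it directly to the whole Hopf algebroid rather than only to the $\cA'_*$ factor. Your verification of $\Delta(\tau_0)=\tau_0\otimes 1 + 1\otimes\tau_0$ from the identity $\eta_R(a)\otimes 1 = 1\otimes\eta_L(a)$ is exactly the right check; note that, after inverting $\rho$, only $\tau_0$ and the $\xi_i$ are needed to generate the algebra, so the recursive verification of $\Delta(\tau_k)$ for $k\geq 1$ that you flag as tedious is in fact unnecessary for the identification. The trade-off is that your route requires a nontrivial imported theorem---faithfully flat invariance of Hopf-algebroid cohomology (\cite{hovey}, Theorems~A and~B, which the paper cites in its remark)---which you should state and reference precisely, whereas the paper's main proof stays elementary modulo one explicit cobar computation. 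Your observation that the comparison is realized by a map of cobar DGAs (coming from the Hopf-algebroid map $\bigl(\F_2[\rho^{\pm 1}],\F_2[\rho^{\pm 1}][\xi_i]\bigr)\to \bigl(\M_2[\rho^{-1}],\cA_*[\rho^{-1}]\bigr)$) is what justifies preservation of products, Massey products, and the algebraic squaring operations of~\cite{May70}, and the degree bookkeeping $(s,f)\mapsto(2s+f,f,s+f)$ is correct.
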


The formula for degrees appears to be more complicated than it is.
The idea is that one doubles the internal degree, 
which is the stem plus the Adams filtration, while leaving the
Adams filtration unchanged.
Then the weight
is always exactly half of the internal degree.

\begin{proof}
Since localization is exact, we may compute the cohomology
of the Hopf algebroid $(\M_2[\rho^{-1}], \cA_*[ \rho^{-1} ])$ to obtain 
$\Ext^\R[\rho^{-1}]$.
After localizing at $\rho$, we have
$\tau_{k+1} = \rho^{-1} \tau_k^2 + 
  \rho^{-1} \tau \xi_{k+1} + \tau_0 \xi_{k+1}$, 
and so 
the Hopf algebroid $(\M_2[\rho^{-1}], \cA_*[ \rho^{-1} ])$ is described by
\begin{align*}
& \cA_*[\rho^{-1}] = \M_2[\rho^{-1}] [ \tau_0, \xi_0, \xi_1, \ldots ] /
  (\xi_0 = 1) \\
& \eta_L(\tau) = \tau, \quad 
\eta_R(\tau) = \tau + \rho \tau_0, \quad \eta_L(\rho)=\eta_R(\rho)=\rho \\
& \Delta( \tau_0 ) = \tau_0 \otimes 1 + 1 \otimes \tau_0 \\
& \Delta( \xi_k ) = \sum \xi_{k-i}^{2^i} \otimes \xi_i.
\end{align*}
Since these formulas contain no interactions between $\tau_i$'s and
$\xi_j$'s, there is a splitting
\[
(\M_2[\rho^{-1}], \cA_*[\rho^{-1}])\iso 
(\M_2[\rho^{-1}],\cA'_*) \otimes_{\F_2} (\F_2, \cA''_*),
\]
where
$(\M_2[\rho^{-1}],\cA'_*)$ is the Hopf algebroid
\begin{align*}
& \cA'_* = \M_2[\rho^{-1}] [ \tau_0] \\
& \eta_L(\tau) = \tau, \quad \eta_R(\tau)= \tau + \rho \tau_0 \\
& \Delta( \tau_0 ) = \tau_0 \otimes 1 + 1 \otimes \tau_0
\end{align*}
and $(\F_2,\cA''_*)$ is the Hopf algebra
\begin{align*}
& \cA''_* = \F_2 [ \xi_0, \xi_1, \ldots ] /
  (\xi_0 = 1) \\
& \Delta( \xi_k ) = \sum \xi_{k-i}^{2^i} \otimes \xi_i.
\end{align*}
Notice that $\cA''_*$ is equal to the classical dual Steenrod algebra, 
and so its cohomology is $\Ext_{\cl}$ (with
degrees suitably shifted).  For $\cA'_*$, 
we can perform the change of variables $x=\rho\tau_0$
since $\rho$ is invertible, 
yielding 
\[
(\M_2[\rho^{-1}], \cA'_*) \iso \F_2[\rho^{\pm 1}] \tens_{\F_2} (\F_2[\tau],\cB), 
\]
where $(\F_2[\tau], \cB)$ is
the Hopf algebroid defined in Lemma~\ref{lem:A'-lemma} below.
The lemma implies that the cohomology of $(\M_2[\rho^{-1}],\cA'_*)$ 
is $\F_2[\rho^{\pm 1}]$, concentrated in homological degree zero.
\end{proof}

\begin{lemma}
\label{lem:A'-lemma}
Let $R=\F_2[t]$ and let $\cB=R[x]$, with Hopf algebroid structure on
$(R,\cB)$ given by the formulas
\[ \eta_L(t)=t, \quad \eta_R(t)=t+x, \quad \Delta(x)=x\tens 1+
1\tens x
\]
(the formula $\Delta(t)=t\tens 1$ is forced by the axioms).  
Then the cohomology of $(R,\cB)$ is isomorphic to $\F_2$, concentrated
in homological degree $0$.  
\end{lemma}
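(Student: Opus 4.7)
The plan is to identify $(R, \cB)$ with the Hopf algebroid presenting the ``trivial'' groupoid on $\Spec(R) = \A^1_{\F_2}$ (in which any two points are uniquely isomorphic via translation), so that its cohomology reduces to the standard Amitsur complex computation for the ring map $\F_2 \to R$.

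First I would perform the change of coordinate $u = t + x$ on $\cB$, so that $\cB = \F_2[t, u]$ with $\eta_L(t) = t$ and $\eta_R(t) = u$.  This exhibits an isomorphism of $R$-bimodules $\cB \cong R \otimes_{\F_2} R$ under which $\eta_L(r) = r \otimes 1$ and $\eta_R(r) = 1 \otimes r$.  Since $\Delta$ is $R$-bilinear with $\Delta \eta_L(r) = \eta_L(r) \otimes 1$ and $\Delta \eta_R(r) = 1 \otimes \eta_R(r)$, and $\cB$ is generated as a ring by $\eta_L(R)$ and $\eta_R(R)$, one obtains $\Delta(r \otimes s) = r \otimes 1 \otimes s$ in $\cB \otimes_R \cB \cong R^{\otimes 3}$.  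Iterating this identification gives $\cB^{\otimes_R n} \cong R^{\otimes (n+1)}$, and under this isomorphism the cobar coface maps $d^i$ become the standard Amitsur maps that insert a $1$ into the $i$-th tensor slot.

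With this translation in hand, the cobar complex of $(R, \cB)$ becomes the Amitsur complex
\[
R \to R \otimes R \to R \otimes R \otimes R \to \cdots
\]
of the ring map $\F_2 \to R$.  The augmentation $\epsilon\colon R \to \F_2$, $t \mapsto 0$, is a retraction of the unit $\F_2 \to R$, and I would use it to define the standard contracting homotopy
\[
h(r_0 \otimes r_1 \otimes \cdots \otimes r_n) = \epsilon(r_0) \cdot r_1 \otimes r_2 \otimes \cdots \otimes r_n
\]
on the augmented complex $\F_2 \to R \to R \otimes R \to \cdots$.  A routine check using only $\epsilon(1) = 1$ and the insertion form of the differential shows that $h\delta + \delta h = \mathrm{id}$, so the augmented complex is acyclic.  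Equivalently, the cobar complex has $H^0 = \F_2$ (a fact one also sees directly, since $\eta_L(r) = \eta_R(r)$ forces $r$ to be constant) and $H^n = 0$ for $n > 0$.

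The one place where care is needed is the bookkeeping in the first step: tracking the $R$-bimodule structure on $\cB$, verifying that $\Delta$ becomes the insertion map $r \otimes s \mapsto r \otimes 1 \otimes s$, and confirming that the $\otimes_R$ conventions in the cobar complex match the usual $\otimes_{\F_2}$ tensors in the Amitsur complex.  Once that identification is settled, the contracting homotopy finishes the proof with no further input.
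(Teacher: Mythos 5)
Your proof is correct, and it takes a genuinely different route from the paper's main argument. The paper filters the cobar complex by powers of $x$, identifies the associated-graded cohomology as the polynomial ring $\F_2[t,h_0,h_1,\ldots]$ (dual to an exterior algebra), and then runs a secondary spectral sequence with differentials $d_{2^i}(t^{2^i}) = h_i$ to reach $E_\infty = \F_2$. You instead change coordinates $u = t+x$ to recognize $(R,\cB)$ as the Amitsur Hopf algebroid $(R,\, R\otimes_{\F_2}R)$ with $\eta_L$, $\eta_R$ the two tensor inclusions, check that $\Delta(r\otimes s)=r\otimes 1\otimes s$ so that the cobar complex becomes the Amitsur complex of $\F_2\to R$, and then use the augmentation $t\mapsto 0$ to build the standard contracting homotopy $h(a_0\otimes\cdots\otimes a_n)=\epsilon(a_0)\,a_1\otimes\cdots\otimes a_n$. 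Your verification of the identifications is sound: $\eta_L(R)$ and $\eta_R(R)$ generate $\cB$, $\Delta$ is an algebra map on generators, and the homotopy identity $h\delta+\delta h=\mathrm{id}$ holds by the usual computation (with no sign bookkeeping in characteristic $2$).

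In fact, your proof is essentially the cochain-level realization of the paper's subsequent Remark, which observes that $(R,\cB)$ presents the codiscrete (translation) groupoid and then invokes Hovey's equivalence of comodule categories. You make that observation concrete and self-contained, needing neither the secondary spectral sequence nor the comodule-category theorem. The trade-off is pedagogical: the paper's spectral-sequence argument deliberately rehearses the same filtration techniques used throughout the $\rho$-Bockstein analysis, whereas your argument is shorter, more structural, and makes clear exactly what is special about this Hopf algebroid (it is the descent object of a split ring map).
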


\begin{proof}
Let $\cobar_\cB$ be the cobar complex of $(R,\cB)$, and 
filter by powers of $x$.  More explicitly, let $F_i \cobar_\cB$ be the subcomplex
\[ 
0 \ra x^i \cB \ra \sum_{p+q=i} x^p \cB\tens_R x^q \cB \ra \sum_{p+q+r=i}
x^p \cB \tens_R x^q \cB\tens_R x^r \cB \ra \cdots 
\]
This is indeed a subcomplex, and the associated graded $\Gr_x \cobar_\cB$ 
is the cobar complex for $(R,\Gr_x \cB)$.  This pair is
isomorphic to the Hopf algebra (no longer a Hopf algebroid) where
$\eta_L(t)=\eta_R(t)=t$ and $\Delta(x)=x\tens 1+1\tens x$.  The
associated cohomology is the infinite polynomial algebra
$\F_2[t,h_0,h_1,h_2,\ldots]$, where $h_i=[x^{2^i}]$.  One easy way to
see this is to note that the dual of $\Gr_x \cB$ is the exterior algebra
$\F_2[t](e_0,e_1,e_2,\ldots)$, where $e_i$ is dual to $x^{2^i}$.  

Our filtered cobar complex gives rise to a multiplicative spectral sequence with
$E_1$-page equal to $\F_2[t,h_0,h_1,\ldots]$ and converging to the cohomology
of $(R,\cB)$.  The classes $h_i$ are all infinite cycles, since
$[x^{2^i}]$ is indeed a cocycle in $\cobar_\cB$.  Essentially the same analysis as in
Proposition~\ref{prop:diffs-cobar} shows that $d_1(t)=h_0$. This shows that
the $E_2$-page is $\F_2[t^2,h_1,h_2,\ldots]$.
The analysis from Proposition~\ref{prop:diffs-cobar} again shows
$d_2(t^2)=h_1$, which implies that the $E_3$-page is
$\F_2[t^4,h_2,h_3,\ldots]$.  Continue inductively, using that
$d_{2^i}(t^{2^i})=h_i$.  The $E_\infty$-page is just $\F_2$.
\end{proof}

\begin{remark}
We gave a calculational proof of Lemma \ref{lem:A'-lemma}.
Here is a sketch of a more conceptual proof.

The Hopf algebroid $(R,\cB)$ has
the same information as the presheaf of groupoids which sends an
$\F_2$-algebra $S$ to the groupoid with object set
$\Hom_{\F_2\!-\!\alg}(R,S)$ and morphism set $\Hom_{\F_2\!-\!\alg}(\cB,S)$.  One
readily checks that this groupoid is the translation category
associated to the abelian group $(S,+)$; very briefly, the image of
$x$ in $S$ is the name of the morphism, the image of $t$ is its
domain, and therefore $t+x$ is its codomain.  Notice
that this groupoid
is contractible no matter what $S$ is---this is the key
observation. By 
\cite{hovey}*{Theorems A and B}
it follows that the category of $(R,\cB)$-comodules is equivalent
to the category of comodules for the trivial Hopf algebroid
$(\F_2,\F_2)$.  In particular, one obtains an isomorphism of $\Ext$ groups.
\end{remark}


\section{Analysis of the $\rho$-Bockstein spectral sequence}
\label{sctn:Bockstein-analysis}

In this section we determine all differentials in the $\rho$-Bockstein
spectral sequence, within a given range of dimensions.

\vspace{0.1in}
 
\subsection{Identification of the $E_1$-page}

From Section \ref{sctn:p-Bockstein:generalities}, 
the $\rho$-Bockstein spectral sequence takes the form
\[ E_1=\Ext_{\C} [\rho] \Rightarrow \Ext_{\cA}(\M_2,\M_2).
\]
The groups $\Ext_\C$ have been
computed in \cite{DI10} and \cite{Isaksen14c} through a large range of
dimensions. 
Figure \ref{fig:ExtC} gives a picture of
$\Ext_\C$.  Recall that this chart is a two-dimensional
representation of a tri-graded object.  For every black dot  $x$ in
the chart there are classes $\tau^i x$ for $i \geq 1$ 
lying behind $x$ (going into the page); in contrast, the red dots are
killed by $\tau$.  To get the $E_1$-page for the $\rho$-Bockstein
spectral sequence, we freely adjoin the class $\rho$ to this chart.
With respect to the picture, multiplication by $\rho$ moves one degree
to the left
and one degree back.  So we can regard the same chart as a depiction
of our $E_1$-page if we interpret every black dot as representing an
entire triangular cone moving back (via multiplication by $\tau$) and to
the left (via multiplication by $\rho$); and every red dot represents
a line of $\rho$-multiples going back and to the left.  For example,
we must remember that in the $(2,1)$ spot on the grid there are
classes $\rho\tau^i h_2$, $\rho^5\tau^i h_3$, $\rho^{13}\tau^i h_4$,
and so forth.  In general, when looking at coordinates $(s,f)$ on the chart,
one must look horizontally to the right and be aware that $\rho^k x$
is potentially present, where $x$ is a class in $\Ext_\C$ at 
coordinates $(s+k,f)$.

There are so many classes in the $E_1$-page, and it is so difficult to
represent the three-dimensionsal chart, that one of the largest challenges of
running the $\rho$-Bockstein spectral sequence is one of
organization.  We will explain some techniques for managing this.

\subsection{Sorting the $E_1$-page}

To analyze the $\rho$-Bockstein spectral sequence it is useful to sort
the $E_1$-page by the Milnor-Witt degree $s-w$.  
The $\rho$-Bockstein differentials all have degree $(-1,1,0)$ with
respect to the $(s,f,w)$-grading, and therefore have degree $-1$ with
respect to the Milnor-Witt degree.

Table \ref{tab:MW-gens} shows the multiplicative generators for 
the $\rho$-Bockstein $E_1$-page
through Milnor-Witt degree $5$.
The information in Table \ref{tab:MW-gens} was extracted from
the $\Ext_\C$ chart in Figure \ref{fig:ExtC} in the following manner.
Lemma \ref{lem:MW-bound} says that elements
in Milnor-Witt degree $t$ satisfy
$f\geq s-2t$.  
Specifically, elements in Milnor-Witt degree at most $5$ lie
on or above the line $f = s - 10$ of slope $1$.

This region is infinite, and in principle could contain
generators in very high stems.  However, in $\Ext_\C$ there is a 
line of slope $1/2$ above which all elements are multiples of $h_1$
\cite{GI15}.  The line of slope $1$ 
and the line of slope $1/2$ bound a finite region which is 
easily searched exhaustively for generators of
Milnor-Witt degree at most $5$.

Note that the converse does not hold:
some elements bounded by these lines may have
Milnor-Witt degree greater than $5$.

Our $E_1$-page is additively generated by all nonvanishing
products of the elements from Table~\ref{tab:MW-gens}.
 Because the Bockstein differentials are $\rho$-linear, it
suffices to understand how the differentials behave on products that
do not involve $\rho$.    
Table \ref{tab:rho-Bock-gens}
shows $\F_2[\rho]$-module generators for the $E_1$-page,
sorted by Milnor-Witt degree.

\subsection{Bockstein differentials}
\mbox{}

Proposition \ref{prop:diffs-cobar} established some 
$\rho$-Bockstein differentials with a brute force approach via the 
cobar complex. 
We will next 
describe a different
technique that computes all differentials in a large range.

All of our arguments will center on the $\rho$-local calculation
of Theorem~\ref{thm:p-local}.  This result says that if we invert
$\rho$, then  the $\rho$-Bockstein spectral sequence converges to a
copy of $\Ext_{\cl}\tens_{\F_2} \F_2[\rho,\rho^{-1}]$, with the motivic
$h_i$ corresponding to the classical $h_{i-1}$. 

When identifying possible $\rho$-Bockstein differentials,
there are two useful things to keep in mind:
\begin{itemize}
\item With respect to our $\Ext_\C$ chart, the differentials all go up
one spot and left one spot;
\item With respect to Table~\ref{tab:rho-Bock-gens}, the differentials
all go to the left one column.  
\end{itemize}
Combining these two facts (which
involves switching back and forth between the chart and table), one
can often severely narrow the possibilities for differentials.

\begin{lemma}
\label{lem:d1}
\mbox{}
The $\rho$-Bockstein $d_1$ differential is completely determined by:
\begin{enumerate}
\item
$d_1(\tau) = \rho h_0$.
\item
The elements $h_0$, $h_1$, $h_2$, $h_3$, and $c_0$ are all permanent
cycles.
\item $d_1(Ph_1)=0$.  
\end{enumerate}
\end{lemma}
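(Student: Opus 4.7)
The plan is to exploit the multiplicativity of the spectral sequence and the $\rho$-linearity of $d_1$ to reduce the computation to a finite set of generators. Since $E_1 = \Ext_\C[\rho]$ and $\rho$ is a permanent cycle, the Leibniz rule implies $d_1$ is determined by its values on an $\F_2$-algebra generating set for $\Ext_\C$ together with the class $\tau$. Using Lemma~\ref{lem:MW-bound} and the slope-$1/2$ bound in $\Ext_\C$ mentioned earlier, one confines the search for generators in each Milnor-Witt degree to a finite region, and Table~\ref{tab:MW-gens} then yields in the relevant range the generating set $\{\tau, h_0, h_1, h_2, h_3, c_0, Ph_1\}$. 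So it suffices to compute $d_1$ on each of these.

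The value $d_1(\tau) = \rho h_0$ is Proposition~\ref{prop:diffs-cobar}. For $h_0 = [\tau_0]$ and $h_i = [\xi_1^{2^{i-1}}]$ with $i = 1, 2, 3$, the representing element is primitive in the Hopf algebroid $\cA_*$, so its cobar differential already vanishes identically. Consequently every $d_r$ is zero on these classes, establishing the permanent cycle claim of part~(2).

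The harder step will be showing $d_1(c_0) = 0$ and $d_1(Ph_1) = 0$, since these classes are not represented by cobar primitives. The cleanest argument uses Theorem~\ref{thm:p-local}: under the shift $h_n \mapsto h_{n+1}$ together with Massey-product preservation, the motivic classes $c_0$ and $Ph_1$ correspond to non-zero classes in $\Ext_{\cl}$, which are permanent cycles in the classical Adams spectral sequence and hence in the $\rho$-localized $\rho$-Bockstein spectral sequence as well. Because $d_1$ strictly raises $\rho$-filtration by one and $\rho$-multiplication is injective on $\Ext_\C[\rho]$, a hypothetical $d_1(c_0) = \rho y$ or $d_1(Ph_1) = \rho y$ with $y \neq 0$ would remain non-zero after inverting $\rho$, contradicting permanent cycle status. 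As a purely combinatorial backup, one can consult Table~\ref{tab:rho-Bock-gens} and verify directly that no non-zero class of $E_1$ sits in the relevant target tridegrees.

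The main obstacle is bookkeeping: ensuring the generating set above is truly complete in the range of interest, and confirming the correspondence of motivic $c_0$ and $Ph_1$ with their classical namesakes under the Massey-product-preserving isomorphism of Theorem~\ref{thm:p-local}. A missing generator would leave $d_1$ undetermined elsewhere, so this verification carries the substantive content of the lemma.
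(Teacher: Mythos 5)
Your argument for $h_0, h_1, h_2, h_3$ via cobar primitivity is fine and is in fact a cleaner uniform argument than the paper uses (the paper splits by negative Milnor--Witt degree vs.\ $\rho$-local survival). But your treatment of $c_0$ and $Ph_1$ contains a genuine error that would sink the proof.

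You assert that under the $\rho$-localization isomorphism of Theorem~\ref{thm:p-local}, the motivic classes $c_0$ and $Ph_1$ ``correspond to non-zero classes in $\Ext_{\cl}$'' and hence are permanent cycles. This is false. Track the degrees: the isomorphism $\Ext_{\cl}[\rho^{\pm 1}] \cong \Ext_\R[\rho^{-1}]$ identifies Milnor--Witt degree on the right with the classical stem on the left. Motivic $c_0$ sits in degree $(8,3,5)$, hence Milnor--Witt degree $3$; after inverting $\rho$ it must land in $\Ext_{\cl}^{3,3}[\rho^{\pm1}]$, which is one-dimensional and is already hit by (the localization of) $h_1^2 h_3$. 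So $c_0$ is $\rho$-torsion (indeed Table~\ref{tab:rho-Bock-Einfty} records it as killed by $\rho^7$). Worse, $Ph_1$ has Milnor--Witt degree $4$, and $\Ext_{\cl}^{4,*}=0$, so $Ph_1$ dies entirely after $\rho$-localization; in fact the paper shows (Lemma~\ref{lem:d3}) that $d_3(Ph_1) = \rho^3 h_1^3 c_0 \neq 0$, so $Ph_1$ is not even a permanent cycle. Your ``cleanest argument'' thus proves more than is true, which is a sign the reasoning breaks. The injectivity observation about $\rho$-multiplication on $E_1$ is correct but only leads to a contradiction if the class really does survive localization, and these do not.

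What actually works is the thing you relegate to a ``combinatorial backup'': a direct degree check. For $c_0$ at $(8,3,5)$, a $d_1$ would land in $(7,4,5)$ with target $\rho x$ where $x \in \Ext_\C^{8,4,5}$, and inspection of the $\Ext_\C$ chart shows there is no such $x$. For $Ph_1$ at $(9,5,5)$, a $d_1$ would need $x \in \Ext_\C^{9,6,5}$, and again the chart is empty there. That is the paper's argument, and it is the one you should promote from backup to main line; the $\rho$-local argument should be discarded for these two classes.
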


\begin{proof}
The differential $d_1(\tau) = \rho h_0$ was established in
Proposition \ref{prop:diffs-cobar}.

The classes $h_0$ and $h_1$ cannot support differentials because
there are no elements in negative Milnor-Witt degrees.
The classes $h_2$ and $h_3$ must survive the $\rho$-local spectral
sequence, so they cannot support differentials.  Comparing chart and
table, there are no possibilities for a differential on $c_0$.

Finally, if $d_1(Ph_1)$ is nonzero, then it is of the form $\rho x$ for
a class $x$ that does not contain $\rho$. 
This class $x$ would appear at coordinates $(9,6)$ 
in the $\Ext_\C$ chart.  By inspection, there is no such $x$.
\end{proof}

\begin{remark}
We have shown that $P h_1$ survives to the $E_2$-page,
but we have not shown that it is a permanent cycle.
The $\Ext_\C$
chart shows that $\rho^3 h_1^3 c_0$ is the only potential target
for a differential on $P h_1$.
If $P h_1$ is not a permanent cycle, then 
the only possibility
is that $d_3(Ph_1)$ equals $\rho^3 h_1^3 c_0$.
We will see below in Lemma \ref{lem:d3} that this
differential does occur.  
\end{remark}

Lemma~\ref{lem:d1} allows us to compute all $d_1$-differentials, using
the product structure. 
Figure \ref{fig:Bockstein} displays the resulting $E_2$-page,
sorted by Milnor-Witt degree.

Table \ref{tab:rho-Bock-E2}
gives $\F_2[\rho]$-module generators
for part of the $E_2$-page. 
Recall from Lemma \ref{lem:differential-torsion} that 
$\rho$-torsion elements cannot be involved in any further differentials,
so we have not included such elements in the table.
We have also eliminated the elements that cannot be
involved in any differentials because 
we know they are $\rho$-local by Theorem \ref{thm:p-local}.

Note that $\tau h_1$ is indecomposable in the $E_2$-page, although
$\tau h_1^2$ does decompose as $\tau h_1 \cdot h_1$.  The
multiplicative generators for the $E_2$-page are then
\[ \boxed{h_0},\  \boxed{h_1},\  \tau h_1,\  \boxed{h_2}, \ \tau^2,\  \tau h_2^2,\  \boxed{h_3},\
\boxed{c_0},\  \tau
h_0^3h_3,\  \tau c_0,\  Ph_1,
\]
where boxes indicate classes that we already know are permanent cycles.

\begin{lemma}
\label{lem:d2}
The $\rho$-Bockstein $d_2$ differential is completely determined by:
\begin{enumerate}
\item
$d_2(\tau^2) = \rho^2 \cdot \tau h_1$.
\item
The elements $\tau h_1$, $\tau h_2^2$, and $\tau c_0$ are permanent cycles.
\item
$d_2(\tau h_0^3 h_3) = 0$.
\item
$d_2(P h_1) = 0$.
\end{enumerate}
\end{lemma}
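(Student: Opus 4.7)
The plan is to compute $d_2$ on each multiplicative generator of the $E_2$-page listed just before the lemma statement and then invoke the Leibniz rule. The boxed generators are already known to be permanent cycles, so attention is confined to the six unboxed generators $\tau h_1$, $\tau^2$, $\tau h_2^2$, $\tau h_0^3 h_3$, $\tau c_0$, and $Ph_1$, which are exactly the elements appearing in the four parts of the statement.

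Part (1) is immediate from Proposition \ref{prop:diffs-cobar} applied with $k=1$. For the permanent-cycle assertions in part (2), the uniform approach is a direct cobar computation in the style of Example \ref{ex:d}, combined with Remark \ref{rem:rho-ss}. For $\tau h_1$, one has $d_\cobar(\tau h_1) = \rho[\tau_0|\xi_1]$, and then from the coproduct formulas
\[ d_\cobar([\tau_0\xi_1]) = [\tau_0|\xi_1] + [\xi_1|\tau_0], \qquad d_\cobar([\tau_1]) = [\xi_1|\tau_0] \]
one obtains
\[ d_\cobar(\tau h_1) = \rho\, d_\cobar\bigl([\tau_0\xi_1] + [\tau_1]\bigr), \]
with no nonzero $\rho^r v_r$ error term for any $r$. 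Iterating Remark \ref{rem:rho-ss} then shows that $\tau h_1$ is a permanent cycle, not merely a $d_2$-cycle. Analogous, but longer, cobar manipulations using the coproduct formulas for $\tau_k$ and $\xi_k$ with small $k$ are intended to handle $\tau h_2^2$ and $\tau c_0$ in the same way.

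For parts (3) and (4) I would argue by inspection of tridegrees, in the spirit of the proof of Lemma \ref{lem:d1}. A nonzero value of $d_2(\tau h_0^3 h_3)$ or $d_2(Ph_1)$ would have to be a $\rho^2$-multiple of an $E_2$-class that is $\rho$-torsion-free by Lemma \ref{lem:differential-torsion}, lie one Milnor-Witt degree below its source, and occupy a specific $(s,f,w)$-position. Consulting Table \ref{tab:rho-Bock-E2} in the relevant tridegrees shows that no class satisfies all these constraints, so both differentials vanish. Combining these facts with $\rho$-linearity and the Leibniz rule pins down $d_2$ on the entire $E_2$-page.

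The main obstacle is the cobar calculation for $\tau h_2^2$ (and to a lesser extent $\tau c_0$): as Example \ref{ex:d} already shows, such computations expand rapidly and demand careful bookkeeping of terms modulo $\rho^2$, together with clever choices of the auxiliary cochains $u_i$ appearing in Remark \ref{rem:rho-ss}. Once those cobar manipulations are in hand, the remaining pieces of the proof are mechanical given the $E_2$ generator tables.
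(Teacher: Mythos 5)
Your treatment of parts (1), (3), and (4) matches the paper: cite Proposition \ref{prop:diffs-cobar} for $d_2(\tau^2)$, and rule out $d_2$ on $\tau h_0^3 h_3$ and $Ph_1$ by inspecting tridegrees. Your cobar computation for $\tau h_1$ is also correct and complete --- you correctly find $d_\cobar(\tau h_1)=\rho\,d_\cobar\bigl([\tau_0\xi_1]+[\tau_1]\bigr)$, and Remark \ref{rem:rho-ss} then kills every $d_r(\tau h_1)$. This is a legitimate alternative to the paper's degree-plus-$\rho$-localization argument; it buys an explicit cobar witness at the cost of more bookkeeping.

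The gap is in $\tau h_2^2$ and, especially, $\tau c_0$. You announce that ``analogous but longer'' cobar manipulations ``are intended to'' handle these, but you do not carry them out, and you acknowledge them as ``the main obstacle.'' That is the substance of the claim, so the proof is incomplete as written. For $\tau h_2^2$ the issue is only effort: the paper simply observes that degree and table inspection already show there is no class in the relevant tridegree to serve as a target, so no computation is needed at all. For $\tau c_0$ the issue is more serious. There \emph{is} a degree-allowed target, namely $\rho^2 h_1 c_0$, so inspection does not suffice, and a cobar computation would require an explicit filtration-$3$ cocycle representing $c_0$ (a Massey-product class), followed by a delicate computation of $d_\cobar(\tau\cdot[\text{cocycle for }c_0])$ modulo $\rho^2$. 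You underestimate this by calling it a ``lesser'' obstacle. The paper avoids it entirely via a Leibniz-rule contradiction: if $d_2(\tau c_0)=\rho^2 h_1 c_0$, then $d_2(h_1^2\cdot\tau c_0)=\rho^2 h_1^3 c_0\neq 0$, yet $h_1^2\cdot\tau c_0=\tau h_1^2 c_0=0$ already on the $E_2$-page because $\tau h_1^2 c_0=0$ in $\Ext_\C$. That relation and the resulting contradiction are the key ideas missing from your proposal.
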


\begin{proof}
The differential $d_2(\tau^2) = \rho^2 \tau h_1$ was established in 
Proposition \ref{prop:diffs-cobar}.

Comparison of chart and table shows that
a Bockstein differential on $\tau h_1$ could only hit $h_0^2$ or
$\rho^2 h_1^2$.  The first is impossible since the target of a 
$d_2$ differential must be divisible by $\rho^2$, and the second is ruled
out by the fact that $h_1^2$ survives
$\rho$-localization.  So no differential can ever exist on $\tau h_1$.  

Similarly, chart and table show that there are no possible
differentials on $\tau h_2^2$, and 
no possible
$d_2$ differential on either $\tau h_0^3 h_3$ or $Ph_1$.  

It remains to consider $\tau c_0$.  The only possibility for a
differential is that $d_2 (\tau c_0)$ might equal $\rho^2 h_1 c_0$.
But if this happened we would also have $d_2(h_1^2\tau c_0)= \rho^2
h_1^3 c_0$, which contradicts the fact that 
$\tau h_1^2 c_0$ is zero on the $E_2$-page, while
$\rho^2 h_1^3  c_0$ is non-zero.
\end{proof}

Once again, Lemma \ref{lem:d2} allows the complete computation of the
$E_3$-page (in our given range), which is
shown in Figure \ref{fig:Bockstein}, sorted by Milnor-Witt degree.
Table \ref{tab:rho-Bock-E3}
gives $\F_2[\rho]$-module generators
for part of the $E_3$-page.
Recall from Lemma \ref{lem:differential-torsion} that 
$\rho$-torsion elements cannot be involved in any further differentials,
so we have not included such elements in the table.
We have also eliminated the elements that cannot be
involved in any differentials because
we know they are $\rho$-local by Theorem \ref{thm:p-local}.

The multiplicative generators for the $E_3$-page are
\[ \boxed{h_0},\  \boxed{h_1},\  \boxed{\tau h_1},\  \boxed{h_2},\  \tau^2
h_0,\  \tau^2 h_2,\  \boxed{\tau h_2^2},\  \boxed{h_3}, \
\boxed{c_0},\  \tau^4,\ 
\tau h_0^3 h_3, \ \boxed{\tau c_0},\  Ph_1,
\]
where boxes indicate classes that we already know are permanent cycles.

\begin{lemma} The $\rho$-Bockstein $d_3$ differential is 
completely determined by:
\label{lem:d3}
\mbox{}
\begin{enumerate}
\item
$d_3(P h_1) = \rho^3 h_1^3 c_0$.
\item
The elements $\tau^2 h_0$ and $\tau^2 h_2$ are permanent cycles.
\item
$d_3( \tau^4) = 0$.
\item
$d_3(\tau h_0^3 h_3) = 0$.
\end{enumerate}
\end{lemma}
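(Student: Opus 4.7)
The plan is to exploit the $\rho$-linearity and multiplicativity of $d_3$ to reduce the problem to determining $d_3$ on the multiplicative generators of the $E_3$-page listed in the text preceding the lemma. Of those generators, the boxed ones are already known to be permanent cycles, leaving only $\tau^2 h_0$, $\tau^2 h_2$, $\tau^4$, $\tau h_0^3 h_3$, and $Ph_1$. These five classes correspond exactly to the four numbered items of the lemma.

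The easier items I would dispatch first. For item (3), Proposition \ref{prop:diffs-cobar}(2) already tells us that the first nonzero $\rho$-Bockstein differential on $\tau^{2^k}$ is $d_{2^k}$, so $d_3(\tau^4) = 0$ is the $k=2$ case. For items (2) and (4), I would enumerate the potential $d_3$-targets for $\tau^2 h_0$, $\tau^2 h_2$, and $\tau h_0^3 h_3$ by inspecting the $\Ext_\C$-chart in Figure \ref{fig:ExtC} together with Table \ref{tab:rho-Bock-E3}, remembering that a target lies one unit up and one unit to the left of the source, is $\rho^3$-divisible, and must survive on $E_3$. Since $d_r$ drops Milnor-Witt degree by one, the candidates lie in a single Milnor-Witt slice explicitly enumerated in the table, so the verification is finite. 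To upgrade ``$d_3(x) = 0$'' to ``$x$ is a permanent cycle'' for $x = \tau^2 h_0$ and $x = \tau^2 h_2$ as stated in (2), I would repeat the inspection for the higher pages $E_r$, using Lemma \ref{lem:MW-bound} to bound the relevant Milnor-Witt slices to a finite list.

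The crux is item (1), which I expect to be the main obstacle. The remark after Lemma \ref{lem:d1} already singled out $\rho^3 h_1^3 c_0$ as the only possible nontrivial value of $d_3(Ph_1)$, so it suffices to show that $Ph_1$ is not a permanent cycle. My plan is to argue by contradiction using Theorem \ref{thm:p-local}. If $Ph_1$ were a permanent cycle, a chart inspection analogous to the one in the previous paragraph would show that none of the $\rho^k Ph_1$ is hit by any subsequent differential, so $Ph_1$ would lift to a $\rho$-torsion-free class $\widetilde{Ph_1} \in \Ext_\R$. Its image in $\Ext_\R[\rho^{-1}] \iso \Ext_{\cl}[\rho^{\pm 1}]$ would then be nonzero, and tracing the degree correspondence $(s',f') \mapsto (2s'+f', f', s'+f')$ from Theorem \ref{thm:p-local} together with a $\rho^4$-twist pins the hypothetical classical source to bidegree $(s',f') = (4,5)$. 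But $\Ext_{\cl}^{4,5} = 0$, since the only filtration-$5$ words in $h_0,h_1,h_2,h_3,c_0$ landing in stem $4$ are forced to vanish by $h_0 h_1 = 0$. This contradicts the nonvanishing just established, so $Ph_1$ cannot be a permanent cycle, and therefore $d_3(Ph_1) = \rho^3 h_1^3 c_0$.
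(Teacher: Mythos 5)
Your proposal is correct in its essentials, and on the crux item (1) it reaches the right answer by a route that is genuinely dual to the paper's. The paper starts from the \emph{target}: $h_1$ and $c_0$ are already known permanent cycles (Lemma~\ref{lem:d1}), so $h_1^3 c_0$ is a permanent cycle, yet the degree count against Theorem~\ref{thm:p-local} shows $h_1^3 c_0$ cannot survive $\rho$-localization (the would-be classical preimage sits in $\Ext_{\cl}^{3,6}=0$), hence it must be killed, and $Ph_1$ is the unique possible source. You instead start from the \emph{source}: you suppose $Ph_1$ is a permanent cycle, argue that no $\rho^k Ph_1$ can be hit, and then the same Theorem~\ref{thm:p-local} degree count shows the would-be classical preimage would sit in $\Ext_{\cl}^{4,5}=0$, a contradiction; the Remark after Lemma~\ref{lem:d1} then pins the target to $\rho^3 h_1^3 c_0$. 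Both arguments turn on the same $\rho$-local vanishing phenomenon; yours requires the extra verification that $\rho^k Ph_1$ is never hit (a Milnor-Witt degree $5$ inspection, which you wave at rather than perform, but which does check out), while the paper's sidesteps this by working with classes already certified as permanent cycles. Your handling of item (3) via Proposition~\ref{prop:diffs-cobar}(2) is actually tidier than the paper's bare ``by inspection.''

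The one genuine soft spot is your treatment of $\tau^2 h_2$ in item (2). You propose to establish that it is a permanent cycle by iterated chart inspection over the pages $E_r$, bounded by Lemma~\ref{lem:MW-bound}. But inspection alone does not close the question: there \emph{is} a surviving class in the correct tridegree on $E_4$, namely $\rho^4 h_2^2$, which is a legitimate candidate for $d_4(\tau^2 h_2)$ and is not visibly excluded by positional considerations. Ruling it out requires observing that $h_2^2$ is $\rho$-local by Theorem~\ref{thm:p-local}, so it cannot become $\rho$-torsion, hence cannot be a differential target. The paper's proof invokes exactly this. Since you already deploy Theorem~\ref{thm:p-local} fluently for item (1), this is a fixable omission rather than a conceptual error, but as written your plan for (2) would stall at $E_4$ on $\tau^2 h_2$.
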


\begin{proof}
As we saw in Lemma~\ref{lem:d1}, 
$h_1$ and  $c_0$ are permanent cycles. 
Therefore, $h_1^3 c_0$ is a permanent cycle.
We know from Theorem \ref{thm:p-local} that $h_1^3 c_0$
does not survive $\rho$-localization.  Therefore,
some differential hits $\rho^r h_1^3 c_0$.
The only possibility is that $d_3(P h_1)$ equals $\rho^3 h_1^3 c_0$.

Inspection of the $E_3$-page shows that
there are no possible values for differentials on $\tau^2 h_0$.
For $\tau^2 h_2$, there is a possibility that
$d_4(\tau^2 h_2)$ equals $\rho^2 h_2^2$.
However, this differential is ruled out by Theorem \ref{thm:p-local}.

By inspection, there are no possible values for $d_3$ differentials
on $\tau^4$ or $\tau h_0^3 h_3$.
\end{proof}

The $d_3$ differential has a very mild effect on the $E_3$-page of our
spectral sequence.  In Table~\ref{tab:rho-Bock-E3}, the elements
$Ph_1$ and $h_1^kPh_1$ disappear from column four, 
the elements $h_1^k c_0$ disappear from column three for $k \geq 3$.
Everything else remains the same, so we will not include a separate
table for the $E_4$-page.  
The multiplicative generators are
the same as for the $E_3$-page, except that $Ph_1$ is thrown out.
Figure \ref{fig:Bockstein} depicts the $E_4$-page, sorted by
Milnor-Witt degree.

Also, all of these generators are permanent cycles 
except possibly for $\tau^4$ and
$\tau h_0^3h_3$.  
In particular, 
every element of the $E_4$-page in Milnor-Witt degrees strictly less
than $4$ is now known to be a permanent cycle.
All the remaining differentials will go from 
Milnor-Witt degree $4$ to Milnor-Witt degree $3$.

\begin{lemma}
\label{lem:d4}
The $\rho$-Bockstein $d_4$ differential is completely determined by:
\begin{enumerate}
\item
$d_4(\tau^4) = \rho^4 \tau^2 h_2$.
\item
$d_4(\tau h_0^3 h_3) = \rho^4 h_1^2 c_0$.
\item
The other generators of the $E_4$-page are permanent cycles.
\end{enumerate}
\end{lemma}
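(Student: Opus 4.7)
The plan is to handle the three items in increasing order of difficulty. Item~(1), the differential $d_4(\tau^4)=\rho^4\tau^2 h_2$, is an immediate instance of Proposition~\ref{prop:diffs-cobar}(2) with $k=2$. Item~(3) requires essentially no new work: the multiplicative generators of the $E_4$-page are the same as those of the $E_3$-page with $Ph_1$ removed, and the bookkeeping just before the statement of the lemma already boxes $h_0$, $h_1$, $\tau h_1$, $h_2$, $\tau^2 h_0$, $\tau^2 h_2$, $\tau h_2^2$, $h_3$, $c_0$, and $\tau c_0$ as permanent cycles coming out of Lemmas~\ref{lem:d1}, \ref{lem:d2}, and \ref{lem:d3}, leaving only $\tau^4$ and $\tau h_0^3 h_3$ to be pinned down by items~(1) and~(2).

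For item~(2), I would imitate the necessity argument used for $d_3(Ph_1)$ in Lemma~\ref{lem:d3}. Since $h_1$ and $c_0$ are permanent cycles (Lemma~\ref{lem:d1}), so is $h_1^2 c_0$. The $\rho$-local calculation of Theorem~\ref{thm:p-local} sends classical $h_n$ to motivic $h_{n+1}$, and a class at motivic tridegree $(s,f,w)$ lies in the image of this inclusion only when $s+f=2w$. The class $h_1^2 c_0$ sits at a tridegree with $s+f\neq 2w$, so it is $\rho$-power torsion in $\Ext_\R$, and therefore some $\rho$-power of $h_1^2 c_0$ must be a boundary in the $\rho$-Bockstein spectral sequence. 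A tridegree enumeration of potential sources on the $E_r$-pages, using the multiplicative generator lists already assembled, should leave $\tau h_0^3 h_3$ via $d_4$ as the unique candidate, forcing $d_4(\tau h_0^3 h_3)=\rho^4 h_1^2 c_0$.

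The hard part will be making that enumeration truly exhaustive: one must confirm that $\tau h_0^3 h_3$ is the only generator capable of supporting a differential whose image eventually makes some $\rho^r h_1^2 c_0$ a boundary, and that this differential is in fact $d_4$ rather than a higher-page operation. If the indirect enumeration does not close cleanly, the fallback is a direct cobar computation in the spirit of Example~\ref{ex:d}: apply $d_\cobar$ to $\tau[\tau_0|\tau_0|\tau_0|\xi_1^4]$ using $\eta_R(\tau)=\tau+\rho\tau_0$ and the quadratic relations $\tau_k^2=\tau\xi_{k+1}+\rho\tau_{k+1}+\rho\tau_0\xi_{k+1}$, strip off coboundaries modulo $\rho^5$ using Remark~\ref{rem:rho-ss}, and identify the surviving $\rho^4$-coefficient with a cobar representative of $h_1^2 c_0$ in $\Ext_\C$. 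This would be mechanical but lengthy owing to the degree-four tensor structure.
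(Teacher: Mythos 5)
Items (1) and (3) match the paper exactly: (1) is a restatement of Proposition~\ref{prop:diffs-cobar}(2) with $k=2$, and (3) is the bookkeeping the paper records just before the lemma. For item (2) your overall strategy --- $h_1^2 c_0$ is a permanent cycle that cannot survive $\rho$-localization, so some $\rho^r h_1^2 c_0$ must be a boundary, and $d_4(\tau h_0^3 h_3)$ is the only available source --- is exactly the paper's argument.

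However, the degree criterion you invoke to deduce $\rho$-torsion is wrong as stated. You claim that $s+f\neq 2w$ forces a class to be $\rho$-power torsion, but $\rho h_1^2$ sits in tridegree $(1,2,1)$ with $s+f-2w=1$ and is not $\rho$-torsion, since it maps to $\rho h_0^2\neq 0$ in $\Ext_{\cl}[\rho^{\pm 1}]$. The condition $s+f=2w$ only characterizes the image of the embedding $\Ext_{\cl}\hookrightarrow \Ext_\R[\rho^{-1}]$ at $\rho$-power zero; the full localization $\Ext_{\cl}[\rho^{\pm 1}]$ is nonzero at motivic tridegree $(s,f,w)$ precisely when $\Ext_{\cl}^{s-w,f}\neq 0$, which one reads off from the degree formula in Theorem~\ref{thm:p-local}(3) after absorbing $k=s+f-2w$ powers of $\rho$. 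For $h_1^2 c_0$ at $(10,5,7)$ this asks for a nonzero class in the classical $3$-stem at filtration $5$, and there is none (the classical $3$-stem has maximal filtration $3$). So your conclusion is correct, but the justification needs to be repaired along these lines; the paper's phrase ``does not survive $\rho$-localization'' is implicitly appealing to exactly this check. The cobar fallback you sketch would work in principle but is unnecessary once the chart enumeration is done carefully.
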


\begin{proof}
The differential
$d_4(\tau^4) = \rho^4 \tau^2 h_2$ was established in
Proposition \ref{prop:diffs-cobar}.

We know that $h_1^2c_0$ is a permanent cycle, but we also 
 know from Theorem \ref{thm:p-local} that $h_1^2 c_0$
does not survive $\rho$-localization.  Therefore,
some differential hits $\rho^r h_1^2 c_0$ for some $r$.  Looking at
the chart,
the only possibility is that $d_4(\tau h_0^3 h_3)$ equals $\rho^4 h_1^2 c_0$.
\end{proof}

The multiplicative generators of the $E_5$-page are the permanent
cycles we have seen already, together with
$\tau^4 h_0$ and  $\tau^4 h_1$. 

\begin{lemma}
\label{lem:d5}
The $\rho$-Bockstein $d_5$ differential is zero.
\end{lemma}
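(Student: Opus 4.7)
The plan is to follow the pattern of Lemmas \ref{lem:d1}--\ref{lem:d4}: verify $d_5 = 0$ on each multiplicative generator of the $E_5$-page and then invoke the Leibniz rule to extend the vanishing to all of $E_5$. Among the listed generators, every class except $\tau^4 h_0$ and $\tau^4 h_1$ has already been shown to be a permanent cycle, so $d_5$ vanishes on those automatically. It remains only to handle the two new generators.

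For each of these, I will identify all possible targets in the correct tri-degree. Since $d_5$ moves a class one unit to the left and five units up on the $\Ext_\C$ chart, and any target must be a $\rho^5$-multiple with $\rho$ moving one unit to the left, the ``$y$-cofactor'' of $\rho^5$ for a source at chart position $(s_0, f_0)$ sits at $(s_0 + 4, f_0 + 5)$. Hence $d_5(\tau^4 h_0)$, whose source is plotted at $(0, 1)$, would be a $\rho^5$-multiple of a class $y \in \Ext_\C$ at $(4, 6)$; and $d_5(\tau^4 h_1)$, whose source sits at $(1, 1)$, would be $\rho^5 y$ with $y$ at $(5, 6)$.

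The final step is to inspect Figure \ref{fig:ExtC} and observe that $\Ext_\C$ vanishes at both $(4, 6)$ and $(5, 6)$. This reduces to the classical fact that $\Ext_{\cl}$ is zero at these bidegrees: in stem $4$ the $h_0$-tower on $h_2^2$ terminates at $h_0 h_2^2 = h_1^2 h_3$ since $h_0 \cdot h_1^2 h_3 = 0$, and in stem $5$ the only generator is $h_1 h_3$ at $f = 2$, which is also killed by $h_0$. With no candidate targets available, both differentials vanish.

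The main obstacle, as in the preceding lemmas of this section, is the tri-graded bookkeeping needed to locate the correct target bidegrees; once this is done the vanishing is immediate. I do not anticipate needing $\rho$-locality (Theorem~\ref{thm:p-local}) or any Leibniz-style indirect arguments here, because the relevant slots on the $\Ext_\C$ chart are simply empty.
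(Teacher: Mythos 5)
Your overall strategy — reduce to the multiplicative generators $\tau^4 h_0$ and $\tau^4 h_1$ (which are the only generators of the $E_5$-page not already known to be permanent cycles) and then check that there are no available targets on the $\Ext_\C$ chart — is exactly what the paper does. However, the degree bookkeeping contains a genuine error. In the $\rho$-Bockstein spectral sequence, \emph{every} differential $d_r$ has degree $(-1,1,0)$ in $(s,f,w)$; the subscript $r$ records the jump in $\rho$-filtration (i.e.\ how many powers of $\rho$ appear in the target), not a shift in $f$. You have imported the Adams-type rule ``$d_r$ raises $f$ by $r$,'' which does not apply here. Writing the target of $d_5(x)$ as $\rho^5 y$ with $y\in\Ext_\C$ at chart position $(s_y,f_y)$, the correct constraints are $s_y = s_0 + 4$ and $f_y = f_0 + 1$ (with $w_y = w_0 + 5$). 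So for $\tau^4 h_0$ at $(0,1)$ one must look at $(4,2)$, and for $\tau^4 h_1$ at $(1,1)$ at $(5,2)$ — not $(4,6)$ and $(5,6)$ as you claim.

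Compounding this, the classical identifications you offer to justify emptiness are incorrect: $h_2^2$ lies in stem $6$ (not $4$) and supports no $h_0$-tower since $h_0 h_2^2 = 0$; the relation is $h_2^3 = h_1^2 h_3$ in stem $9$, not $h_0 h_2^2 = h_1^2 h_3$; and $h_1 h_3$ lives in stem $8$ with $f=2$, not stem $5$. Your final conclusion that $d_5 = 0$ is nevertheless true, because the correct target spots $(4,2)$ and $(5,2)$ (and, coincidentally, the spots $(4,6)$ and $(5,6)$ you actually examined) are all empty in Figure~\ref{fig:ExtC}. But the argument as written relies on a wrong degree formula and misidentified classes, so the correctness of the conclusion is an accident; with the same mistake applied to a differential $d_r$ whose genuine target position is nonempty, you would have drawn a false conclusion.
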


\begin{proof}
We only have to check for possible $d_5$ differentials on $\tau^4
h_0$ and $\tau^4 h_1$.  Inspection of the $\Ext_\C$ chart
shows that there are no classes in the relevant degrees.
\end{proof}

Figure \ref{fig:Bockstein}
displays the $E_6$-page, sorted by Milnor-Witt degree.

\begin{lemma}
\label{lem:d6}
The $\rho$-Bockstein $d_6$ differential is completely determined by:
\begin{enumerate}
\item
$d_6(\tau^4 h_1) = \rho^6  \tau h_2^2$.
\item
The element $\tau^4 h_0$ is a permanent cycle.
\end{enumerate}
\end{lemma}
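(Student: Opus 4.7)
Part~(1) has effectively already been established in Example~\ref{ex:d}; that calculation shows $d_\cobar(\tau^4 h_1) \equiv \rho^6 \tau h_2^2 \pmod{\rho^7}$, and Remark~\ref{rem:rho-ss} then translates this directly into the stated $\rho$-Bockstein differential. My plan is simply to cite Example~\ref{ex:d}.

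For part~(2), the plan is to rule out every potential $d_r$ differential on $\tau^4 h_0$ for $r \geq 6$. First I locate $\tau^4 h_0$ at $(s,f,w) = (0,1,-4)$, so that a nonzero $d_r$ would produce $\rho^r y$ with $y \in \Ext_\C$ at trigrading $(r-1,2,r-4)$ and Milnor-Witt degree $3$. Applying Lemma~\ref{lem:MW-bound} to $y$ gives $2 \geq (r-1) - 6$, bounding $r \leq 9$, so only $r \in \{6,7,8,9\}$ need to be checked. For $r=6$ inspection shows that $\Ext_\C$ has no class at $(s,f) = (5,2)$ at any weight, since $2^i+2^j-2 = 5$ has no solution. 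For $r=7$ the only candidate is $y = \tau h_2^2$, but then the putative target $\rho^7 \tau h_2^2 = \rho \cdot \rho^6 \tau h_2^2$ is already a boundary on $E_7$ by part~(1), hence zero.

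The substantive cases are $r=8$ and $r=9$, with candidates $h_0 h_3$ at $(7,2,4)$ and $h_1 h_3$ at $(8,2,5)$. To eliminate these I appeal to Theorem~\ref{thm:p-local}: under its isomorphism, both $h_0 h_3$ and $h_1 h_3$ pull back (up to a power of $\rho$) to the nonzero classical class $h_0 h_2$ in $\Ext_{\cl}$, so neither is $\rho$-torsion, and hence $\rho^k h_0 h_3$ and $\rho^k h_1 h_3$ remain nonzero on the Bockstein $E_\infty$ for every $k$. If either differential were nonzero, its target would become a Bockstein boundary, producing $\rho$-torsion that is incompatible with the $\rho$-local identification. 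Hence both differentials vanish, and $\tau^4 h_0$ is a permanent cycle.

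The main obstacle I anticipate is tracking the shifted degree correspondence in Theorem~\ref{thm:p-local} carefully enough to pin down which motivic class maps to which classical class at which power of $\rho$; without this dictionary the $\rho$-local arguments for $r = 8,9$ cannot be deployed. Once that correspondence is in hand, the remainder of the case analysis is essentially mechanical degree-counting combined with the Milnor-Witt-degree cutoff.
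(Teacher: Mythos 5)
Your handling of part~(1) by citing Example~\ref{ex:d} is correct and genuinely simpler than the paper's route: the paper instead deduces from Theorem~\ref{thm:p-local} that $\tau^4 h_1$ must support a differential and then rules out the only competing target $\rho^8 h_1 h_3$ by $\rho$-locality, whereas you simply observe that the cobar computation in Example~\ref{ex:d} already pins the differential down. That is a fine and arguably cleaner proof of (1).

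Your argument for part~(2) contains a genuine error in the $r=8$ case. You assert that $h_0 h_3$ "pulls back (up to a power of $\rho$) to the nonzero classical class $h_0 h_2$" and is therefore $\rho$-torsion free. This is false. Under the dictionary of Theorem~\ref{thm:p-local} the classical class $h_0 h_2$ corresponds to the motivic class $h_1 h_3$, not $h_0 h_3$; there is no classical class mapping to $h_0 h_3$, and indeed $h_0 h_3$ \emph{is} $\rho$-torsion (as recorded in Table~\ref{tab:rho-Bock-Einfty}: it is annihilated by $\rho$). The actual reason $d_8(\tau^4 h_0) = \rho^8 h_0 h_3$ cannot occur is that $d_1(\tau h_3) = \rho h_0 h_3$ already forces $\rho h_0 h_3 = 0$ on the $E_2$-page, so $\rho^8 h_0 h_3$ vanishes long before the $E_8$-page and the proposed target is zero. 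Your $r=9$ argument using $h_1 h_3$ is correct, and your $r=6,7$ checks are fine, but the $r=8$ justification as written does not hold. You should also note that the paper sidesteps the whole case analysis with a one-line argument: $d_1(\tau^5) = \rho\, \tau^4 h_0$ shows $\tau^4 h_0$ is $\rho$-torsion on every page $E_r$ with $r\geq 2$, and then Lemma~\ref{lem:differential-torsion} directly forbids it from supporting any nontrivial differential. That is both shorter and avoids the pitfall you fell into.
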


\begin{proof}
Lemma \ref{lem:differential-torsion} implies that
$\tau^4 h_0$ is a permanent cycle because of the
differential $d_1(\tau^5) = \rho \tau^4 h_0$.

By Theorem \ref{thm:p-local}, we know that $\tau^4 h_1$ does not survive
$\rho$-localization.  
Since 
$\rho^r \tau^4 h_1$ cannot be hit by a differential, it follows that
$\tau^4 h_1$ supports a differential.
The two possibilities are that 
$d_6(\tau^4 h_1)$ equals $\rho^6 \tau h_2^2$ or
$d_8(\tau^4 h_1)$ equals $\rho^8 h_1 h_3$.
We know from Theorem \ref{thm:p-local} that $h_1 h_3$ survives $\rho$-localization.
Therefore, we must have $d_6(\tau^4 h_1) = \rho^6 \tau h_2^2$.  
\end{proof}

The multiplicative generators for the $E_7$-page are $\tau^4 h_1^2$, 
together with other classes that we already know are permanent cycles.
Figure \ref{fig:Bockstein}
displays the $E_7$-page, sorted by Milnor-Witt degree.
\begin{lemma}
\label{lem:d7}
The $\rho$-Bockstein $d_7$ differential is completely determined by:
\begin{enumerate}
\item
$d_7(\tau^4 h_1^2) = \rho^7 c_0$.
\end{enumerate}
\end{lemma}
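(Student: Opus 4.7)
The plan is to follow the template of Lemmas \ref{lem:d3}, \ref{lem:d4}, and \ref{lem:d6}: use Theorem \ref{thm:p-local} to show that $\tau^4 h_1^2$ cannot survive to a $\rho$-local class, then combine Lemma \ref{lem:MW-bound} with the $\rho$-local structure to pin down the unique possible target.

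First I would show that $\tau^4 h_1^2$ cannot be a permanent cycle. The element has tridegree $(s, f, w) = (2, 2, -2)$ and Milnor--Witt degree $4$. Under the isomorphism of Theorem \ref{thm:p-local}, a $\rho$-local class in this tridegree would have to be $\rho^8$ times a classical element in $\Ext_{\cl}^{4, 2}$; but $\Ext_{\cl}^{4, 2}$ is spanned by $h_1 h_2 = 0$, so no such class exists. To conclude that $\tau^4 h_1^2$ itself supports a differential, I would rule out the alternative that $\tau^4 h_1^2$ is a permanent cycle whose $\rho$-multiples are killed by the source of some other differential. Any such source $y$ would have Adams filtration $1$ and Milnor--Witt degree $5$, so must be a $\rho$-power multiple of some $\tau^a h_i$; a case analysis using the already established $d_1$ differentials shows that every relevant candidate supports a nonzero $d_1$ and so does not survive to the relevant page.

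Next I would identify the target. In $d_r(\tau^4 h_1^2) = \rho^r v$ the class $v$ lies in tridegree $(r + 1, 3, r - 2)$ and has Milnor--Witt degree $3$. Lemma \ref{lem:MW-bound} then gives $3 \geq (r+1) - 2 \cdot 3$, that is, $r \leq 8$. For $r = 7$, the unique candidate is $c_0 \in \Ext_\C^{8, 3, 5}$, which gives the claimed differential. For $r = 8$, the unique candidate is $h_2^3 \in \Ext_\C^{9, 3, 6}$, but this corresponds under Theorem \ref{thm:p-local} to the nonzero classical class $h_1^3 = h_0^2 h_2$; hence $h_2^3$ is $\rho$-local, in particular a permanent cycle that cannot appear as a differential target. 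So $r = 7$ is forced.

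The main obstacle is the intermediate step of ruling out sources into $\rho^k \tau^4 h_1^2$: while each check is mechanical, there are enough filtration-$1$ candidates to enumerate (using Lemma \ref{lem:MW-bound} together with the $\Ext_\C$ chart in Figure \ref{fig:ExtC}) that some care is required to avoid omissions.
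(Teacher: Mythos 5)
Your overall structure matches the paper: apply Theorem \ref{thm:p-local} to see that $\tau^4 h_1^2$ must become $\rho$-torsion, rule out its $\rho$-multiples being hit by a differential so that it must itself support one, then pin down the unique consistent target. Your second half --- bounding $r\leq 8$ with Lemma \ref{lem:MW-bound}, identifying $c_0$ and $h_2^3=h_1^2 h_3$ as the only possible targets for $r=7,8$, and eliminating $r=8$ because $h_2^3$ is $\rho$-local --- is correct and is essentially the paper's argument (the paper names the $r=8$ target as $\rho^8 h_1^2 h_3$, which is the same class).

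The step ruling out a source hitting $\rho^m\tau^4 h_1^2$ contains a genuine error. You assert that every filtration-$1$, Milnor--Witt-degree-$5$ candidate supports a nonzero $d_1$; this is false. A degree count shows that a source $\rho^j z$ of a $d_{r_0}$ landing on $\rho^m\tau^4 h_1^2$ has $z\in\Ext_\C$ of tridegree $(3-r_0,1,-2-r_0)$, and nonnegativity of the stem forces $r_0\leq 3$. For $r_0=3$ the candidate $\tau^5 h_0$ does support a $d_1$ as you hope, since $d_1(\tau^5 h_0)=\rho\tau^4 h_0^2\neq 0$. But for $r_0=2$ the candidate is $\tau^5 h_1$, and $d_1(\tau^5 h_1)=\rho\tau^4 h_0 h_1=0$ because $h_0 h_1=0$ in $\Ext_\C$. (There is nothing at all in the $r_0=1$ tridegree $(2,1,-3)$.) So $\tau^5 h_1$ survives to $E_2$ and must be disposed of by a different argument: writing $\tau^5 h_1 = \tau^4\cdot\tau h_1$, using $d_2(\tau^4)=0$ and the fact that $\tau h_1$ is a permanent cycle (Lemma \ref{lem:d2}), the Leibniz rule gives $d_2(\tau^5 h_1)=0$, so it cannot hit $\rho^2\tau^4 h_1^2$. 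With this patch your proof is complete. The paper itself disposes of this step with a single unelaborated clause (``$\rho^r\tau^4 h_1^2$ cannot be hit by a differential''), so the implicit bookkeeping is easy to misstep on; the case $\tau^5 h_1$ is precisely the one you cannot kill at $E_1$.
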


\begin{proof}
By Theorem \ref{thm:p-local}, we know that $\tau^4 h_1^2$ does not survive
$\rho$-localization, and  
since 
$\rho^r \tau^4 h_1^2$ cannot be hit by a differential it follows that
$\tau^4 h_1^2$ supports a differential.
The two possibilities are that 
$d_7(\tau^4 h_1^2)$ equals $\rho^7 c_0$ or
$d_8(\tau^4 h_1^2)$ equals $\rho^8 h_1^2 h_3$.
We know from Theorem \ref{thm:p-local} that $h_1^2 h_3$ survives $\rho$-localization.
Therefore, we must have $d_7(\tau^4 h_1^2) = \rho^7 c_0$.
\end{proof}

Finally, once we reach the $E_8$-page, we simply observe that all the
multiplicative generators are classes that have already been checked
to be permanent cycles.

\subsection{The $\rho$-Bockstein $E_\infty$-page}
\label{subsctn:p-Bock-Einfty}

Table \ref{tab:rho-Bock-Einfty}
describes the $\rho$-Bockstein $E_\infty$-page in the
range of interest.  The table gives a list of $\M_2$-module generators
for the $E_\infty$-page.
We write $x \, (\rho^k)$
if $x$ is killed by $\rho^k$, and we write $x$(loc) 
for classes that are non-zero after $\rho$-localization.

The reader is invited to construct a single Adams chart that captures all
of this information.  We have found that combining all of the Milnor-Witt
degrees into one picture makes it too difficult to get a feel
for what is going on.
For example, at coordinates $(3,3)$, one has six elements
$h_1^3$, $\tau h_1^3$, $\tau^2 h_1^3$, $\tau^3 h_1^3$, $\rho^5 c_0$, and
$\rho^6 h_1^2 h_3$.
Each of these elements is related by $h_0$, $h_1$, and $\rho$ extensions
to other elements.


\section{From the $\rho$-Bockstein $E_\infty$-page to $\Ext_\R$}
\label{sctn:ExtR}

Having obtained the $E_\infty$-page of the $\rho$-Bockstein spectral sequence,
we will now compute all hidden extensions in the range under consideration.
The key arguments rely on 
May's Convergence Theorem \cite{May69} in a slightly unusual way.
We will use this theorem to argue that certain Massey products
$\langle a, b, c \rangle$ cannot be well-defined.  We will deduce that
either $a b$ or $b c$ must be non-zero via a hidden extension.

\begin{remark} 
As is typical in this kind of analysis, there are issues underlying
the naming of classes.  An element $x$ of the Bockstein $E_\infty$-page
represents a coset of elements of $\Ext_\R$, and it is
convenient if we can slightly ambiguously use the same symbol $x$ 
for one particular element from this coset.
This selection has to happen on a
case-by-case basis, but once done it  allows us to 
use the same symbols for 
elements of the Bockstein $E_\infty$-page and  for elements
of $\Ext_\R$ that they represent.  

For example,
the element $h_0$ on the $E_\infty$-page represents two elements of $\Ext_\R$,
because of the presence of $\rho h_1$ in higher Bockstein filtration.
One of these elements is annihilated by $\rho$ and the other is not.
We write $h_0$ for the element of $\Ext_\R$ that is annihilated by $\rho$.

Table \ref{tab:Ext-gen} summarizes these ambiguities 
and gives definitions in terms of $\rho$-torsion.
\end{remark}

Once again, careful bookkeeping is critical at this stage.
We begin by choosing preferred $\F_2[\rho]$-module generators
for $\Ext_\R$ up to Milnor-Witt degree $4$.
First, we choose an ordering of the multiplicative generators of
$\Ext_\R$:
\[
\rho < h_0 < h_1 < \tau h_1 < h_2 < \tau^2 h_0 < \tau^2 h_2 <
\tau h_2^2 < h_3 < c_0 < \tau^4 h_0 < \tau c_0.
\]
The ordering here is essentially arbitrary, although it is convenient
to have elements of low Milnor-Witt degree appear first.

Next, we choose $\F_2[\rho]$-module generators for $\Ext_\R$ that come first
in the lexicographic ordering on monomials in these generators.
For example, we could choose either $h_0^2 h_2$ or $\tau h_1 \cdot h_1^2$
to be an $\F_2[\rho]$-module generator; we select $h_0^2 h_2$
because $h_0 < h_1$.
We do this for each element listed in Table \ref{tab:rho-Bock-Einfty}.

The results of these choices are displayed in Table \ref{tab:ExtR-gen}.
This table lists $\F_2[\rho]$-module generators of $\Ext_\R$.
We write $x \, (\rho^k)$
if $x$ is killed by $\rho^k$, and we write $x$(loc) 
for classes that are non-zero after $\rho$-localization.

Our goal is to produce a list of relations for $\Ext_\R$
that allows every monomial to be reduced to a linear combination
of monomials listed in Table \ref{tab:ExtR-gen}.
We will begin by considering all pairwise products of generators.

\begin{lemma}
\label{lem:ExtR-mult}
Through Milnor-Witt degree 4,
Table \ref{tab:ExtR-mult} lists the products of all pairs of
multiplicative generators of $\Ext_\R$.
\end{lemma}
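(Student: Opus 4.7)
The plan is to compute each pairwise product of the multiplicative generators in the $\rho$-Bockstein $E_\infty$-page, then resolve any hidden $\rho$-extensions to obtain the actual products in $\Ext_\R$. The final output is expressed in terms of the preferred $\F_2[\rho]$-module generators listed in Table \ref{tab:ExtR-gen}.

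First I would read off each product on the $\rho$-Bockstein $E_\infty$-page. The multiplicative structure on the $E_\infty$-page is inherited from the product on $\Ext_\C[\rho]$, so each product is a known monomial in $\Ext_\C$ times the appropriate power of $\rho$, surviving to a class in the $E_\infty$-page listed in Table \ref{tab:rho-Bock-Einfty}. I would then rewrite that class as a linear combination of the chosen $\F_2[\rho]$-module generators, using the lexicographic convention: for instance, $\tau h_1 \cdot h_1^2$ gets rewritten in terms of $h_0^2 h_2$ (times a power of $\rho$), because that is the earlier generator representing the same $E_\infty$ class.

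Many of the products are then essentially forced. If both factors are $\rho$-local, the product is determined by Theorem \ref{thm:p-local} and the classical computation. If one factor is killed by a low power of $\rho$, say $\rho^k a = 0$, then any hidden correction $\rho^j y$ to $ab$ must itself be killed by $\rho^k$, which in the low Milnor-Witt range under consideration usually leaves no room in the relevant tridegree $(s,f,w)$ for a correction. In all remaining cases, I would also use $\rho$-linearity (the known value of $\rho \cdot ab$ determines $ab$ up to $\rho$-torsion) and associativity against already-resolved products to eliminate candidates.

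The main obstacle is the small number of genuine hidden $\rho$-extensions that remain after the routine checks. For these, the strategy flagged in the opening of this section is to produce a suitable triple $(a,b,c)$ and apply May's Convergence Theorem \cite{May69}. Concretely, I would proceed by contradiction: assume the suspected hidden extension is absent, so both $ab$ and $bc$ vanish in $\Ext_\R$; then the Massey product $\langle a,b,c\rangle$ is defined. May's Convergence Theorem forces this bracket to contain a specific class detected on the $\rho$-Bockstein $E_\infty$-page, computable from a Massey product already known in $\Ext_\C$. If that predicted detection contradicts the known list in Table \ref{tab:rho-Bock-Einfty} or disagrees with an already-established product, then one of $ab$, $bc$ must be nonzero, and the shape of the contradiction pinpoints which one and the exact value of the hidden extension. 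The delicate step will be managing the indeterminacy of the Massey product and the filtration shift introduced by May's theorem—these need to be tracked carefully in the tridegree $(s,f,w)$ to be sure that the contradiction is genuine.
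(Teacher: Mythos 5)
Your proposal follows essentially the same route as the paper: routine degree reasons, $\rho$-torsion constraints, and $\rho$-locality (via Theorem \ref{thm:p-local}) dispose of most products, and May's Convergence Theorem resolves the genuine hidden extensions by showing the relevant Massey product cannot be well-defined. Two small clarifications are worth making. First, the paper's triples are always of the special form $\langle \rho^k, a, b\rangle$ with $\rho^k a = 0$ read directly off Table \ref{tab:rho-Bock-Einfty}, and the candidate detecting class for the Massey product is produced by a $\rho$-Bockstein differential, not by a preexisting Massey product in $\Ext_\C$: for instance, $d_1(\tau) = \rho h_0$ supplies the null-homotopy $\tau$, giving the representative $\tau \cdot \tau h_1 = \tau^2 h_1$, which fails to survive to $E_\infty$ and hence yields the contradiction. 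Second, when there is more than one possible nonzero value for the hidden product (as with $h_1\cdot\tau^2 h_0$), the paper pins it down by the $\rho$-torsion versus $\rho$-local comparison you described earlier in the proposal, not by "the shape of the contradiction" in the Massey-product argument; you should not expect the convergence theorem itself to single out the value.
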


In Table \ref{tab:ExtR-mult}, 
the symbol $-$ indicates that the product has no simpler form,
i.e., is a monomial listed in Table \ref{tab:ExtR-gen}.

\begin{proof}
Some products are zero because there is no other possbility;
for example $h_1 h_2$ is zero because there are no non-zero elements in the
appropriate degree.

Some products are zero because we already know that they are annihilated
by some power of $\rho$, while the only non-zero elements in the 
appropriate degree are all $\rho$-local.  For example,
for degree reasons, it is possible that $h_0 h_1$ equals $\rho h_1^2$.
However, we already know that $\rho h_0$ is zero, while
$h_1^2$ is $\rho$-local.  Therefore, $h_0 h_1$ must be zero.
Similar arguments explain all of the pairwise products that are zero
in Table \ref{tab:ExtR-mult}. 

Some of the non-zero pairwise products are not hidden in the 
$\rho$-Bockstein spectral sequence.
For example, consider the product $\tau^2 h_0 \cdot h_2$.
We have that $\tau^2 h_0 \cdot h_2 + h_0 \cdot \tau^2 h_2$ is zero on the
$\rho$-Bockstein $E_\infty$-page, but
$\tau^2 h_0 \cdot h_2 + h_0 \cdot \tau^2 h_2$ might equal something
of higher $\rho$-filtration in $\Ext_\R$.
The possible values for this expression in $\Ext_\R$ are 
the linear combinations of $\rho^3 \cdot \tau h_2^2$ and
$\rho^5 h_1 h_3$.  Both of these elements are non-zero
after multiplication by $\rho$, while
$\tau^2 h_0 \cdot h_2 + h_0 \cdot \tau^2 h_2$ is annihilated by
$\rho$ in $\Ext_\R$.  Therefore, we must have that 
$\tau^2 h_0 \cdot h_2 + h_0 \cdot \tau^2 h_2 = 0$ 
in $\Ext_\R$.

The same argument applies to the other non-hidden extensions 
in Table \ref{tab:ExtR-mult}, except that they are somewhat easier because
there are no possible hidden values.

The remaining non-zero pairwise products are all hidden in
the $\rho$-Bockstein spectral sequence.
For these, we need a more sophisticated argument involving
Massey products and May's Convergence Theorem \cite{May69}.
This theorem says that under certain technical conditions involving the
vanishing of ``crossing" differentials, one can compute Massey
products in $\Ext_\R$ using the $\rho$-Bockstein differentials.

We will demonstrate how this works for the product $h_0 \cdot \tau h_1$.
Consider the Massey product $\langle \rho, h_0, \tau h_1 \rangle$
in $\Ext_\R$.  If this Massey product were well-defined,
then May's Convergence Theorem and the $\rho$-Bockstein differential
$d_1(\tau) = \rho h_0$ would imply that the Massey product 
contains an element that is detected by
$\tau^2 h_1$ in the $\rho$-Bockstein $E_\infty$-page.
(Beware that one needs to check that there are no crossing differentials.)
The element $\tau^2 h_1$ does not survive to the $E_\infty$-page.
Therefore, the Massey product is not well-defined, so
$h_0 \cdot \tau h_1$ must be non-zero.
The only possible value for the product is $\rho h_1 \cdot \tau h_1$.

The same style of argument works for all of the hidden extensions
listed in Table \ref{tab:ExtR-mult}, with one additional complication in some
cases.
Consider the product $h_1 \cdot \tau^2 h_0$.  
Analysis of the Massey product $\langle \rho, \tau^2 h_0, h_1 \rangle$
implies that the product must be non-zero, since
$\tau^3 h_1$ does not survive to the $\rho$-Bockstein
$E_\infty$-page.
However, there is more than one possible value for
$h_1 \cdot \tau^2 h_0$; it could be any linear combination of
$\rho (\tau h_1)^2$ and $\rho^5 h_2^2$.
We know that $\rho \cdot \tau^2 h_0$ is zero,
while $h_2^2$ is $\rho$-local.  Therefore, we deduce that
$h_1 \cdot \tau^2 h_0$ equals $\rho (\tau h_1)^2$.
This type of $\rho$-local analysis allows us to nail down the precise
value of each hidden extension in every case where 
there is more than one possible non-zero value.
\end{proof}

\begin{prop}
\label{prop:ExtR-relations}
Table \ref{tab:ExtR-reln}
gives some relations in $\Ext_\R$ that are hidden in the
$\rho$-Bockstein spectral sequence.
These relations, together with the products given in Table \ref{tab:ExtR-mult},
are a complete set of multiplicative
relations for $\Ext_\R$ up to Milnor-Witt degree $4$.
\end{prop}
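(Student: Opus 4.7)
The plan is to verify each individual relation in Table~\ref{tab:ExtR-reln} and then argue completeness by a finite check. The verification step will proceed exactly as in Lemma~\ref{lem:ExtR-mult}, using the same three tools in tandem: (i) the $\rho$-Bockstein $E_\infty$-page (Table~\ref{tab:rho-Bock-Einfty}) identifies, for any given tridegree, the full list of candidate values modulo $\rho$-filtration; (ii) the splitting of $\Ext_\R$ into its $\rho$-torsion and $\rho$-local parts, together with the $\rho$-local calculation of Theorem~\ref{thm:p-local}, rules out most of those candidates, since an element known to be $\rho$-power torsion cannot equal one whose $\rho$-multiples persist forever; and (iii) May's Convergence Theorem applied to a carefully chosen three-fold Massey product $\langle a,b,c\rangle$ forces certain products $ab$ or $bc$ to be nonzero on the nose (not merely up to Bockstein filtration), by producing a specific Bockstein-detected representative and then exhibiting that its claimed representative does not in fact survive.

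First I would enumerate the relations of Table~\ref{tab:ExtR-reln} and sort them into three types: those that must vanish purely because the target tridegree is empty or entirely $\rho$-local; those whose leading form is already visible on the $E_\infty$-page and whose hidden correction term is pinned down by the torsion-vs-local dichotomy; and those that are genuinely hidden, in the sense that $\rho$-Bockstein filtration gives zero but the relation in $\Ext_\R$ has a nonzero right-hand side. Only this third class requires Massey-product work. For each such relation I would identify a bracket whose Bockstein representative is known (typically coming from the fundamental differential $d_1(\tau)=\rho h_0$ of Proposition~\ref{prop:diffs-cobar} or one of the differentials in Lemmas~\ref{lem:d2}--\ref{lem:d7}), verify the May Convergence hypothesis that no ``crossing'' $\rho$-Bockstein differential obstructs the bracket, and then read off the hidden extension as the unique option compatible with the $\rho$-torsion order of both sides.

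The completeness claim is more conceptual but also more mechanical: the module-generator list in Table~\ref{tab:ExtR-gen} gives a finite $\F_2[\rho]$-basis in each tridegree with Milnor-Witt degree $\le 4$, and the pairwise product table (Table~\ref{tab:ExtR-mult}) together with the additional relations of Table~\ref{tab:ExtR-reln} produces, for every product of two multiplicative generators that does not already appear in those tables (i.e., every longer monomial appearing in the bounded range), a rewriting into the chosen basis. I would check this by an exhaustive tridegree-by-tridegree sweep: in each $(s,f,w)$ with $s-w\le 4$, list all monomials in the chosen generators that land there, apply the tabulated relations to reduce each one, and confirm that the resulting expressions span exactly the $\F_2[\rho]$-summand indicated by Table~\ref{tab:ExtR-gen}. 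Because the Milnor-Witt bound of Lemma~\ref{lem:MW-bound} and the $h_1$-vanishing line cut out only finitely many tridegrees, this is a finite verification.

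The main obstacle will be the Massey-product arguments needed for the genuinely hidden relations. Unlike the pairwise products handled in Lemma~\ref{lem:ExtR-mult}, some of the relations in Table~\ref{tab:ExtR-reln} are likely to be of the form ``monomial $=$ hidden correction'' where the monomial already has Bockstein filtration higher than one would naively expect, and choosing the correct defining Massey product requires navigating indeterminacy carefully. The technical subtlety is verifying the no-crossing-differential hypothesis of May's Convergence Theorem in each instance, since a stray differential in the tridegree between source and target would invalidate the bracket computation and force us to fall back on a direct cobar calculation in the spirit of Example~\ref{ex:d}---a computation we are specifically trying to avoid.
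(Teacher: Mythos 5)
Your proposal matches the paper's strategy: verify each relation with the same Bockstein $E_\infty$/$\rho$-torsion-versus-$\rho$-local/Massey-product toolkit used in Lemma~\ref{lem:ExtR-mult}, then confirm completeness by an exhaustive finite check that the tabulated relations reduce every monomial in the range to a linear combination of the chosen $\F_2[\rho]$-module generators. One detail to anticipate: the paper singles out the relation $h_0^2 \cdot \tau^2 h_2 + (\tau h_1)^3 = \rho^5 c_0$ as the interesting case, and because its left-hand side is a two-term sum, it is handled via a \emph{matric} Massey product (with a row and column vector of entries) rather than an ordinary three-fold bracket $\langle a,b,c\rangle$; your framework would need that routine generalization to close the argument for this relation, though the logic (if the bracket were defined, May's Convergence Theorem and $d_2(\tau^2)=\rho^2\tau h_1$ would force it to be detected by $\tau^4 h_1^2$, which does not survive) is exactly the style you describe.
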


\begin{proof}
These relations follow from the same types of arguments that are given
in the proof of Lemma \ref{lem:ExtR-mult}.
The most interesting is the relation
$h_0^2 \cdot \tau^2 h_2 + (\tau h_1)^3 = \rho^5 c_0$,
which follows from an analysis of the matric Massey product
\[
\left\langle
\rho^2,
\left[
\begin{array}{cc}
h_0 & \tau h_1 
\end{array} 
\right],
\left[
\begin{array}{c}
h_0 \cdot \tau^2 h_2 \\
(\tau h_1)^2 
\end{array}
\right]
\right\rangle.
\]
If this matric Massey product were defined, then 
May's Convergence Theorem and the differential 
$d_2(\tau^2) = \rho^2 \tau h_1$ would imply that it 
is detected by $\tau^4 h_1^2$ in the $\rho$-Bockstein
$E_\infty$-page. 
But $\tau^4 h_1^2$ does not survive to the $\rho$-Bockstein $E_\infty$-page.

For every monomial $x$ in Table \ref{tab:ExtR-gen} and every multiplicative
generator $y$ of $\Ext_\R$, one can check by brute force that
the relations in Tables \ref{tab:ExtR-mult} and \ref{tab:ExtR-reln}
allow one to identify
$x y$ in terms of the monomials in Table \ref{tab:ExtR-gen}.
\end{proof}

Figure \ref{fig:ExtR} displays $\Ext_\R$, sorted by Milnor-Witt
degree.  The picture is similar to the $E_\infty$-page shown in
Figure \ref{fig:Bockstein}, except that the hidden extensions by
$h_0$ and by $h_1$ are indicated with dashed lines.


\section{The Adams spectral sequence}
\label{sctn:Adams}

At this point  we have computed the tri-graded ring
\[ \Ext_\R = \Ext_{\cA}^{*,*,*}(\M_2,\M_2) 
\]
up through Milnor-Witt degree four.  We will now consider the motivic
Adams spectral sequence based on mod $2$ motivic cohomology, which
takes the form
\[ \Ext_{\cA}^{s,f,w}(\M_2,\M_2) \Rightarrow
\hat\pi_{s,w}. \]
Recall that we are writing $\hat\pi_{*,*}$ for the motivic
stable homotopy groups of the completion of the motivic sphere spectrum
with respect to the motivic Eilenberg-Mac Lane spectrum $H \F_2$.
The Adams $d_r$ differential takes elements of tridegree $(s,f,w)$ to
elements of tridegree $(s-1,f+r,w)$.
In particular, the Adams $d_r$ differential decreases the
Milnor-Witt degree by $1$.
So it pays off to once again fracture the $E_2$-page into the
different Milnor-Witt degrees. 

It turns out that there are no Adams differentials in the
range under consideration, as shown in the following result.

\begin{prop}
\label{prop:ASS-diffs}
Up through Milnor-Witt degree four, there are no differentials in the
motivic Adams spectral sequence.
\end{prop}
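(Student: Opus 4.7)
The plan is to enumerate the finitely many candidate nonzero Adams differentials with source in Milnor-Witt degree at most four, and to rule each one out individually. Since $d_r$ has tridegree $(-1, r, 0)$, it lowers the Milnor-Witt degree by one; I would therefore argue one Milnor-Witt degree at a time, taking each generator of $\Ext_\R$ listed in Table~\ref{tab:ExtR-gen} and comparing it against the potential targets in the preceding Milnor-Witt degree as read from Figure~\ref{fig:ExtR}. Because the list of generators in this range is short, the enumeration is manageable.

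The first step is to dispose of the $\rho$-local classes using Theorem~\ref{thm:p-local}. After inverting $\rho$, the motivic Adams spectral sequence becomes, up to the reindexing $h_{n+1} \mapsto h_n$, the classical Adams spectral sequence tensored with $\F_2[\rho^{\pm 1}]$. In the range of classical stems that corresponds, after this shift, to Milnor-Witt degree at most four, there are no nonzero classical Adams differentials; the earliest classical differential $d_2(h_4)=h_0 h_3^2$ lies well outside this range. Consequently any motivic Adams differential on a $\rho$-local class must have $\rho$-torsion target, and in particular any class whose only potential targets are themselves $\rho$-local cannot support a differential at all.

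The second step is to record which generators are permanent cycles because they detect named motivic stable homotopy elements: the classes $\rho$, $h_0$, $h_1$, $h_2$, $h_3$, and $c_0$ detect $[-1]$, $2$, $\eta$, $\nu$, $\sigma$, and $\epsilon$, respectively, and $\tau h_1$ is a permanent cycle because it detects the element in $\hat\pi_{1,0}$ provided by Morel's description recalled in the introduction. By multiplicativity of the Adams spectral sequence, any product of permanent cycles is again a permanent cycle, and the differential on a product is determined by differentials on the factors, so the vanishing propagates. This removes the bulk of the candidates from Table~\ref{tab:ExtR-gen}.

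The short remaining list of candidate classes (the generators with $\tau$-factors in their $\rho$-Bockstein names, such as $\tau^2 h_0$, $\tau^2 h_2$, $\tau h_2^2$, $\tau^4 h_0$, and $\tau c_0$) is handled by Toda bracket arguments of the sort mentioned in the introduction. Each such class can be expressed as a Massey product $\langle a, b, c \rangle$ whose entries are already-known permanent cycles; May's convergence theorem then forces the class itself to be a permanent cycle modulo indeterminacy, and the indeterminacy is eliminated by inspection of the chart in Figure~\ref{fig:ExtR}. The main obstacle will be the Toda bracket bookkeeping, particularly in cases where the indeterminacy in a Massey product overlaps with the potential target of a differential; in each such case one must choose the bracket representative carefully and combine the analysis with the $\rho$-torsion data summarized in Table~\ref{tab:rho-Bock-Einfty} to pin down the final answer.
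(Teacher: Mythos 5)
Your proposal is organized in the right spirit (sort by Milnor-Witt degree, exploit multiplicativity, use Toda brackets for the stubborn cases), and it correctly identifies $\tau^2 h_2$ and $\tau c_0$ as requiring a bracket argument. But there are several genuine gaps.

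First, the claim that $c_0$ is a permanent cycle because ``it detects $\epsilon$'' is both wrong and circular. The paper's $\epsilon$ is the degree-$(0,0)$ twist map; the element detected by $c_0$ is $\epsilonbar$, which is \emph{defined} in Lemma~\ref{lem:epsilonbar-defn} only after $c_0$ is known to survive. What actually works for $c_0$ (and also for $h_3$ and $\tau h_2^2$) is the observation that there are simply no targets in Milnor-Witt degree~2 in the relevant tridegrees; no homotopy element is needed. Similarly, your appeal to ``Morel's description'' to show $\tau h_1$ is a permanent cycle is unsupported: Morel's theorem computes $\Pi_0$, not $\hat\pi_{1,0}$ (which sits in $\Pi_1$), and the statement about $\pi_{1,0}$ is only a conjecture, as the introduction itself says. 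The actual argument is algebraic: any nonzero $d_r(\tau h_1)$ would be $h_0^{r+1}$ or $\rho^{r+1}h_1^{r+1}$, and both are ruled out by multiplying by $h_0^2$ or $\rho^2$, since $h_0^2\cdot\tau h_1=0$ and $\rho^2\cdot\tau h_1=0$ on $E_2$ while $h_0^{r+3}$ and $\rho^{r+3}h_1^{r+1}$ survive.

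Second, your $\rho$-localization step invokes an identification of the $\rho$-localized Adams spectral sequence with the classical one. Theorem~\ref{thm:p-local} is a purely algebraic identification of $E_2$-pages; the paper proves nothing about Adams differentials after inverting $\rho$. Turning that into a statement about differentials would require a spectral-sequence comparison (e.g., via a $\rho$-inverted realization functor) that the paper does not supply, and as you note it would in any case only constrain $\rho$-local targets, not $\rho$-torsion ones. Finally, when you do reach the Toda brackets, the theorem you want is Moss's convergence theorem (May's is used for the $\rho$-Bockstein), and the crucial unmentioned step is verifying that the brackets $\langle\rho^2,\tau\eta,\nu\rangle$ and $\langle\tau\eta,\nu,\omega\nu\rangle$ are actually defined — i.e.\ that $\rho^2\cdot\tau\eta=\tau\eta\cdot\nu=\omega\nu^2=0$ in $\hat\pi_{*,*}$. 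Without those vanishing checks the bracket argument does not get off the ground.
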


\begin{proof}
The proof uses Table \ref{tab:ExtR-gen}
and the $\Ext_\R$ charts in Figure \ref{fig:ExtR} to keep track of elements.

The elements $\rho$, $h_0$, and $h_1$ are permanent cycles, 
as there are no classes in Milnor-Witt degree $-1$. 
For $\tau^2 h_0$, we observe that 
there are no classes of Milnor-Witt degree $1$ in the range of the possible
differentials on $\tau^2 h_0$.
Similarly,
there are no possible values in Milnor-Witt degree $2$ for differentials
on $\tau h_2^2$, $h_3$, and $c_0$.

For degree reasons, the only possible values for
$d_r (\tau h_1)$ are $h_0^{r+1}$ and $\rho^{r+1} h_1^{r+1}$.
However, $h_0^2 \cdot h_0^{r+1}$ is non-zero on the
Adams $E_r$-page, while $h_0^2 \cdot \tau h_1$ is zero.
Also, $\rho^2 \cdot \rho^{r+1} h_1^{r+1}$ is non-zero
on the Adams $E_r$-page, while $\rho^2 \cdot \tau h_1$ is zero.
This implies that there are no differentials
on $\tau h_1$.

The only possible value for
$d_r(h_2)$ is $\rho^{r-1} h_1^{r+1}$.
However, $h_1 \cdot \rho^{r-1} h_1^{r+1}$ is non-zero on the
Adams $E_r$-page, while $h_1 \cdot h_2$ is zero.
This implies that there are no differentials on $h_2$.

The only possibility for a nonzero differential on $\tau^4 h_0$ is
that $d_2(\tau^4 h_0)$ might equal to $\rho^{10} h_1^2 h_3$.
However, $\rho \cdot \tau^4 h_0$ is zero on the Adams $E_2$-page,
while $\rho \cdot \rho^{10} h_1^2 h_3$ is not.
This implies that there are no differentials on $\tau^4 h_0$.

It remains to show that $\tau^2 h_2$ and $\tau c_0$ are permanent cycles.
We handle these more complicated arguments below in 
Lemmas \ref{lem:t^2h2} and \ref{lem:tc0}.
\end{proof}

\begin{lemma}
\label{lem:p^2-th1-h2}
The Massey product
$\langle \rho^2, \tau h_1, h_2 \rangle$ contains $\tau^2 h_2$, 
with indeterminacy generated by $\rho^4 h_3$.
\end{lemma}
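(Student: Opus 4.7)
The plan is to verify that the Massey product is defined, use May's Convergence Theorem to identify $\tau^2 h_2$ as a representative, and then compute the indeterminacy by a tridegree check.

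First I would verify that $\langle \rho^2, \tau h_1, h_2 \rangle$ is defined. The vanishing $\tau h_1 \cdot h_2 = 0$ is recorded in Table \ref{tab:ExtR-mult}. The vanishing $\rho^2 \cdot \tau h_1 = 0$ follows from the Bockstein differential $d_2(\tau^2) = \rho^2 \tau h_1$ of Lemma \ref{lem:d2}: this forces $\rho^2 \tau h_1$ to lie in $\rho$-filtration strictly greater than $2$ in $\Ext_\R$, and a glance at Table \ref{tab:rho-Bock-Einfty} shows the tridegree $(-1,1,-2)$ of $\rho^2 \tau h_1$ contains no candidate class of higher $\rho$-filtration to which it could be equal.

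Next I would invoke May's Convergence Theorem \cite{May69} for the $\rho$-Bockstein spectral sequence. The Bockstein differential $d_2(\tau^2) = \rho^2 \cdot \tau h_1$ supplies a cobar-level nullhomotopy, namely $\tau^2$, for $\rho^2 \cdot \tau h_1$, and the vanishing of $\tau h_1 \cdot h_2$ supplies a nullhomotopy on the right-hand side. May's theorem then identifies $\tau^2 \cdot h_2 = \tau^2 h_2$ as a representative of an element of $\langle \rho^2, \tau h_1, h_2 \rangle$. The hypothesis that requires verification is the absence of crossing Bockstein differentials that could disturb this identification; given the complete list of differentials computed in Section \ref{sctn:Bockstein-analysis}, this reduces to a finite inspection in the relevant tridegrees.

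Finally I would compute the indeterminacy, which by definition equals
\[
\rho^2 \cdot \Ext_\R^{5,1,2} + \Ext_\R^{0,0,-2} \cdot h_2.
\]
By inspection of Figure \ref{fig:ExtR}, the group $\Ext_\R^{0,0,-2}$ is zero, and the group $\Ext_\R^{5,1,2}$ is generated by the single nonzero class $\rho^2 h_3$. Hence the indeterminacy is generated by $\rho^2 \cdot \rho^2 h_3 = \rho^4 h_3$, as claimed. The main obstacle will be the careful verification of the no-crossing-differentials hypothesis of May's theorem in the tri-graded Bockstein setting; while it amounts to a routine check, the three-index bookkeeping demands care.
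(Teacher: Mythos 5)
Your proof takes essentially the same route as the paper: invoke May's Convergence Theorem with the Bockstein differential $d_2(\tau^2)=\rho^2\cdot\tau h_1$, then compute the indeterminacy by degree-counting. The extra detail you supply (checking the bracket is defined; the explicit degree formula $\rho^2\cdot\Ext_\R^{5,1,2}+\Ext_\R^{0,0,-2}\cdot h_2$) is accurate and fills in what the paper leaves implicit.

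One small slip worth flagging: your argument that $\rho^2\cdot\tau h_1=0$ is not quite right. You claim the tridegree $(-1,1,-2)$ contains ``no candidate class of higher $\rho$-filtration,'' but in fact $\rho^4 h_2$ lives there and is nonzero since $h_2$ is $\rho$-local. There really is a potential hidden $\rho$-extension; it is precisely the ambiguity between $\tau h_1$ and $\tau h_1+\rho^2 h_2$ recorded in Table~\ref{tab:Ext-gen}. The honest reason $\rho^2\cdot\tau h_1=0$ holds is that the element of $\Ext_\R$ called $\tau h_1$ is \emph{defined} to be the member of its coset annihilated by $\rho^2$ (see Table~\ref{tab:Ext-gen} and the remark preceding it); with the other choice, the bracket would not even be defined. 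Citing that table rather than arguing from Table~\ref{tab:rho-Bock-Einfty} repairs the step and leaves the rest of your proof intact.
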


\begin{proof}
Apply May's Convergence Theorem \cite{May69},
using the $\rho$-Bockstein differential $d_2(\tau^2) = \rho^2 \cdot \tau h_1$.
This shows that $\tau^2 h_2$ or $\tau^2 h_2 + \rho^4 h_3$ is contained
in the bracket.  
By inspection, the indeterminacy is generated by $\rho^4 h_3$.
\end{proof}

\begin{lemma}
\label{lem:t^2h2}
The element $\tau^2 h_2$ is a permanent cycle.
\end{lemma}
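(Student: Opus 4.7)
The plan is to use the Massey product identification from Lemma~\ref{lem:p^2-th1-h2}, which places $\tau^2 h_2$ in $\langle \rho^2, \tau h_1, h_2 \rangle$ with indeterminacy generated by $\rho^4 h_3$, together with Moss's convergence theorem for the Adams spectral sequence. All of $\rho$, $\tau h_1$, $h_2$, and $h_3$ are permanent cycles (by earlier cases in the proof of Proposition~\ref{prop:ASS-diffs} or trivially), so $\rho^4 h_3$ is also a permanent cycle. It therefore suffices to produce a permanent cycle in the coset $\tau^2 h_2 + \F_2\{\rho^4 h_3\}$.

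I would then form the Toda bracket $\langle \widetilde\rho^{\,2}, \widetilde{\tau h_1}, \widetilde{h_2} \rangle$ in $\hat\pi_{*,*}$, where the tildes denote any homotopy lifts of the corresponding $\Ext_\R$ classes. For this bracket to be defined, I must check that both $\widetilde\rho^{\,2} \cdot \widetilde{\tau h_1}$ and $\widetilde{\tau h_1} \cdot \widetilde{h_2}$ vanish in $\hat\pi_{*,*}$. The corresponding products in $\Ext_\R$ can be read off from Tables~\ref{tab:ExtR-gen} and \ref{tab:ExtR-mult}, and the passage from vanishing in $\Ext_\R$ to vanishing in homotopy is accomplished by checking that there are no hidden Adams extensions in the relevant tridegrees. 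In our low-dimensional range only a handful of candidate classes exist, and these should be eliminated by direct inspection of the $\Ext_\R$ charts.

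With the Toda bracket defined, Moss's convergence theorem would yield that some element of the Massey product $\langle \rho^2, \tau h_1, h_2 \rangle$ is a permanent cycle detecting a member of the Toda bracket, provided the standard no-crossing-differentials hypothesis is met. In the range under consideration, this hypothesis is automatic because any potential crossing differential would have to originate from or land on a class whose status as a permanent cycle has already been established in the ongoing proof of Proposition~\ref{prop:ASS-diffs}. Combining Moss's theorem with Lemma~\ref{lem:p^2-th1-h2} and the permanence of $\rho^4 h_3$, we conclude that $\tau^2 h_2$ itself is a permanent cycle.

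The main obstacle will be confirming that $\widetilde{\tau h_1} \cdot \widetilde{h_2} = 0$ in $\hat\pi_{*,*}$: vanishing on the $\Ext_\R$ page is only a starting point, and one must rule out every potential hidden Adams extension in higher filtration that could contribute a non-zero value. This is a tedious but mechanical check in the small range under consideration, and I expect it to be the longest part of the argument.
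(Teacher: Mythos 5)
Your proposal is essentially identical to the paper's proof: form the Toda bracket $\langle \rho^2, \tau\eta, \nu\rangle$, apply Moss's Convergence Theorem together with Lemma~\ref{lem:p^2-th1-h2}, and strip off the indeterminacy $\rho^4 h_3$, which is already known to be a permanent cycle. The one step you leave as a stub---verifying $\tau\eta \cdot \nu = 0$ in $\hat\pi_{4,2}$---is handled in the paper by noting that the only candidate nonzero value for degree reasons is $\rho^2\nu^2$, and this is ruled out because $\rho^2 \cdot \tau\eta = 0$ forces $\rho^2 \cdot (\tau\eta\cdot\nu) = 0$, whereas $\rho^2 \cdot \rho^2\nu^2 = \rho^4\nu^2 \neq 0$.
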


\begin{proof}
As shown in Table \ref{tab:notation},
let $\tau \eta$ and $\nu$ be elements of $\hat\pi_{1,0}$ and 
$\hat\pi_{3,2}$ respectively that are detected by
$\tau h_1$ and $h_2$.
The product $\rho^2 \cdot \tau \eta$ is zero because there is no other
possibility. For degree reasons, 
the product $\tau \eta \cdot \nu$ could possibly equal $\rho^2 \nu^2$.
However, $\rho^2 \cdot \tau \eta \cdot \nu$ is zero, while
$\rho^2 \cdot \rho^2 \nu^2$ is not.
Therefore,
$\tau \eta \cdot \nu$ is also zero.

We have just shown that the Toda bracket $\langle \rho^2, \tau \eta, \nu \rangle$
is well-defined.
Moss's Convergence Theorem \cite{Moss70} then implies that
the Massey product $\langle \rho^2, \tau h_1, h_2 \rangle$
contains a permanent cycle.
We computed this Massey product in
Lemma \ref{lem:p^2-th1-h2}, so we know that
$\tau^2 h_2$ or $\tau^2 h_2 + \rho^4 h_3$ is a permanent cycle.
We already know that $\rho^4 h_3$ is a permanent cycle, so 
$\tau^2 h_2$ is also a permanent cycle.
\end{proof}

For completeness, we will give an alternative proof
that $\tau^2 h_2$ is a permanent cycle that has a more geometric 
flavor.
There is a functor from classical homotopy theory to motivic
homotopy theory over $\R$ (or over any field) that takes
the sphere $S^p$ to $S^{p,0}$.
Let $\nu_{\tp}$ 
be the unstable map $S^{7,0} \map S^{4,0}$ that is
the image under this functor of the classical Hopf map $S^7 \map S^4$.

\begin{lemma}
\label{lem:Cnutop}
The cohomology of the cofiber of $\nu_{\tp}$
is a free $\M_2$-module on two generators $x$ and $y$ of degrees
$(4,0)$ and $(8,0)$, satisfying $\Sq^4(x) = \tau^2 y$ and 
$\Sq^8(x) = \rho^4 y$.
\end{lemma}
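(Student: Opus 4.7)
The additive structure of $\tilde H^{*,*}(C\nu_{\tp};\F_2)$ follows from the long exact sequence associated to the cofiber sequence $S^{7,0}\xrightarrow{\nu_{\tp}} S^{4,0}\to C\nu_{\tp}$. Since $\tilde H^{*,*}(S^{n,0};\F_2)$ is a free $\M_2$-module on a single generator in bidegree $(n,0)$, any $\M_2$-linear map between $\tilde H^{*,*}(S^{4,0})$ and $\tilde H^{*,*}(S^{7,0})$ is multiplication by an element of $\M_2^{3,0}=0$. Hence $\nu_{\tp}^*=0$, the long exact sequence collapses into short exact sequences that split over $\M_2$ (sphere cohomology is $\M_2$-free), and $\tilde H^{*,*}(C\nu_{\tp};\F_2)$ is a free $\M_2$-module on generators $x$ in bidegree $(4,0)$ and $y$ in bidegree $(8,0)$.

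Next I would locate the Steenrod operations. Bidegree considerations force $\Sq^4(x)\in H^{8,2}(C\nu_{\tp}) = \M_2^{0,2}\{y\} = \F_2\{\tau^2 y\}$ and $\Sq^8(x)\in H^{12,4}(C\nu_{\tp}) = \M_2^{4,4}\{y\} = \F_2\{\rho^4 y\}$, so each operation is either zero or the displayed nonzero monomial. To rule out $\Sq^4(x)=0$, apply the $\C$-Betti realization functor: it sends $C\nu_{\tp}$ to classical $\mathbb{HP}^2$, sends $x$ to its generator $x_{\cl}$, and sends $\tau\mapsto 1$, $\rho\mapsto 0$. Since classically $\Sq^4(x_{\cl})=y_{\cl}\neq 0$, the motivic $\Sq^4(x)$ must also be nonzero, and hence equals $\tau^2 y$.

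The harder claim $\Sq^8(x)=\rho^4 y$ cannot be verified via $\C$-Betti realization, because $\rho^4 y$ is annihilated there. My plan is to use the $\R$-realization functor from motivic spaces over $\R$ to $\Z/2$-equivariant spaces: under this functor $\rho$ corresponds to the Euler class $a_\sigma$ of the sign representation, and $C\nu_{\tp}$ realizes to $\mathbb{HP}^2$ with the trivial $\Z/2$-action. One then computes the $RO(\Z/2)$-graded equivariant $\Sq^8$ on $x_{\cl}$, landing in the summand $\F_2\{a_\sigma^4 y_{\cl}\}$, and verifies its nonvanishing from the equivariant Steenrod action on trivial $\Z/2$-spaces. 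A purely motivic alternative would invoke a Wu-type formula: for a weight-zero class $x\in H^{p,0}$ with $x^2=y$, the operations $\Sq^{2i}(x)$ for $p/2\leq i\leq p$ are given by $\tau^{p-i}\rho^{2i-p}y$, yielding $\Sq^8(x)=\rho^4 y$ when $p=4$ and $i=4$. The main obstacle is exactly this nonvanishing step, since it is a genuinely $\R$-motivic (or equivariant) phenomenon that requires techniques beyond the $\C$-realization.
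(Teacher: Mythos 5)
Your handling of the additive structure and of $\Sq^4(x) = \tau^2 y$ (via $\C$-Betti realization) is correct and matches the paper. But the crucial step $\Sq^8(x) = \rho^4 y$ is not actually proved: you offer two sketches (equivariant realization, and a ``Wu-type formula'' $\Sq^{2i}(x) = \tau^{p-i}\rho^{2i-p}y$), neither of which is carried out, and you yourself flag the nonvanishing as ``the main obstacle.'' The asserted Wu-type formula is in fact equivalent in the case at hand to what needs to be shown, so invoking it is circular unless it is proved independently; and the equivariant plan stops at ``one then computes\ldots and verifies its nonvanishing'' without saying how. So as written the proof has a genuine gap at exactly the hard point.

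The paper closes the gap with a specific trick you did not find: smash $C\nu_{\tp}$ with $S^{4,4}$. The class $z\Smash x$ then lies in bidegree $(8,4)$, which is the balanced range $p=2q$ where the motivic top square agrees with the cup square, so $\Sq^8(z\Smash x) = (z\Smash x)^2$. Voevodsky's Lemma 6.8 gives $z^2 = \rho^4 z$ in $H^{*,*}(S^{4,4})$, and $x^2 = y$ by comparison with the classical case, so $(z\Smash x)^2 = \rho^4\,(z\Smash y)$ by the K\"unneth formula. Since all positive Steenrod squares vanish on $z$ for degree reasons, the Cartan formula forces $\Sq^8(z\Smash x) = z\Smash \Sq^8(x)$, whence $\Sq^8(x) = \rho^4 y$. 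The point of the maneuver is precisely to convert the unbalanced weight-zero class $x$ into a balanced class where $\Sq^8$ becomes a cup square that can be computed directly; this is the $\R$-motivic input that the $\C$-Betti realization cannot see, and it replaces the unproven steps in both of your plans.
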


\begin{proof}
Consider the cofiber sequence
\[
S^{7,0} \map S^{4,0} \map C\nu_{\tp} \map S^{8,0},
\]
where $C\nu_{\tp}$ is the cofiber of $\nu_{\tp}$.
Apply motivic cohomology to obtain a long exact sequence.
It follows that
the cohomology of $C\nu_{\tp}$ is a
free $\M_2$-module on two generators $x$ and $y$ of degrees
$(4,0)$ and $(8,0)$.

For degree reasons, the only possible non-zero cohomology operations
are that $\Sq^4(x)$ and $\Sq^8(x)$ might equal
$\tau^2 y$ and $\rho^4 y$ respectively.
The formula $\Sq^4(x) = \tau^2 y$ follows by
comparison to the classical case.

The formula for $\Sq^8(x)$ is more difficult.
Consider
$S^{4,4} \Smash C\nu_{\tp}$, which has cells in dimensions
$(8,4)$ and $(12,8)$.
The cohomology generator in degree $(8,4)$ is the external
product $z \Smash x$, where $z$ is the cohomology
generator of $S^{4,4}$ in degree $(4,4)$.
The cohomology generator in degree $(12,4)$ is $z \Smash y$.

Now we can compute $\Sq^8$ in terms of the cup product
$(z \Smash x)^2$.
According to \cite{Voevodsky03a}*{Lemma 6.8},
the cup product $z^2$ equals $\rho^4 z$ in the
cohomology of $S^{4,4}$.
Also, the cup product $x^2$ equals $y$ in the cohomology
of $C\nu_{\tp}$ by comparison to the classical case.
By the K\"unneth formula, it follows that 
$(z \Smash x)^2 = \rho^4 (z \Smash y)$ and that
$\Sq^8(x) = \rho^4 y$.
\end{proof}

\begin{proof}[Another proof of Lemma \ref{lem:t^2h2}]
Lemma \ref{lem:Cnutop} shows that
the stabilization of $\nu_{\tp}$ in $\hat\pi_{3,0}$ is detected by
$\tau^2 h_2 + \rho^4 h_3$ in the motivic Adams spectral sequence.
There are elements $\rho$ and $\sigma$
in $\hat\pi_{-1,-1}$ and $\hat\pi_{7,4}$ detected
by $\rho$ and $h_3$ in the motivic Adams spectral sequence.
Therefore,
$\tau^2 h_2$ is a permanent cycle that detects $\nu_{\tp} + \rho^4 \sigma$.
\end{proof}

\begin{lemma}
\label{lem:th1-h2-h0h2}
The Massey product
$\langle \tau h_1, h_2, h_0 h_2 \rangle$ contains $\tau c_0$,
with indeterminacy generated by $\rho \cdot \tau h_1 \cdot h_1 h_3$.
\end{lemma}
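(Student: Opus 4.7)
The plan is to mirror the strategy used for $\tau^2 h_2$ in Lemma~\ref{lem:t^2h2}: compute the corresponding Toda bracket in $\hat\pi_{*,*}$ and then invoke Moss's Convergence Theorem \cite{Moss70} to deduce the Massey product identity in $\Ext_\R$. The relevant Toda bracket is $\langle \tau\eta, \nu, 2\nu\rangle$, which should contain the motivic lift of the classical identity $\epsilon \in \langle \eta, \nu, 2\nu\rangle$ in $\pi_8^s$; this motivic lift is detected by $\tau c_0$.

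First I would verify that the Massey product is defined by checking $\tau h_1 \cdot h_2 = 0$ and $h_0 h_2 \cdot h_2 = h_0 h_2^2 = 0$ in $\Ext_\R$. Both follow from Table~\ref{tab:ExtR-mult}: the first is just $\tau$ times the classical vanishing $h_1 h_2 = 0$, and the second follows from the same vanishing together with the hidden extension $h_0 h_2 = \tau h_1^3$. Next I would verify that the Toda bracket is defined: the product $\tau\eta\cdot\nu$ in $\hat\pi_{4,2}$ has only $\rho^2\nu^2$ as a possible nonzero value, and this is ruled out by the $\rho^2$-annihilator argument used in Lemma~\ref{lem:t^2h2}; and $\nu\cdot 2\nu = 2\nu^2$ vanishes in $\hat\pi_{6,4}$ by comparison with the classical relation $2\nu^2 = 0$.

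Next I would identify the bracket's value. Since the classical bracket satisfies $\epsilon \in \langle\eta,\nu,2\nu\rangle$, a parallel motivic calculation---combined with the fact that $\tau c_0$ is the unique permanent cycle in the target tridegree $(8,3,4)$ that is compatible with the $\rho$-divisibility constraints---identifies the motivic Toda bracket value as the class detected by $\tau c_0$. Moss's Convergence Theorem then yields $\tau c_0 \in \langle \tau h_1, h_2, h_0 h_2\rangle$ in $\Ext_\R$.

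Finally, the indeterminacy is $\tau h_1 \cdot \Ext_\R^{(7,2,4)} + \Ext_\R^{(5,1,2)} \cdot h_0 h_2$. Inspection of Table~\ref{tab:ExtR-gen} shows that $\Ext_\R^{(5,1,2)} = 0$, while $\Ext_\R^{(7,2,4)}$ is spanned by $h_0 h_3$ and $\rho h_1 h_3$. The hidden extension $h_0 h_3 = \rho h_1 h_3$ (reflecting the motivic relation $2\sigma = \rho\eta\sigma$ coming from $2 = \rho\eta$ in the appropriate sense) gives $\tau h_1 \cdot h_0 h_3 = \rho \cdot \tau h_1 \cdot h_1 h_3$, so the indeterminacy collapses to the single class $\rho \cdot \tau h_1 \cdot h_1 h_3$. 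The main obstacle is ensuring that the Toda bracket value is pinned down as $\tau\epsilon$ rather than some $\rho^k$-corrected variant; this is handled by the same tridegree and $\rho$-torsion bookkeeping that succeeded in Lemma~\ref{lem:t^2h2}.
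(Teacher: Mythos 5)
There is a genuine gap, and it concerns the direction of the logic.  Moss's Convergence Theorem lets you pass \emph{from} a Massey product in the Adams $E_r$-page \emph{to} a Toda bracket in homotopy (it tells you that, absent crossing differentials, a defined Massey product contains a permanent cycle detecting an element of the corresponding Toda bracket).  It does not let you compute a Toda bracket in $\hat\pi_{*,*}$ and then \emph{deduce} a Massey product in $\Ext_\R$.  Your plan reverses the arrow, and in fact does so circularly: saying that $\langle \tau\eta,\nu,2\nu\rangle$ contains ``the class detected by $\tau c_0$'' already presupposes that $\tau c_0$ survives the Adams spectral sequence, which is precisely what the paper's Lemma~\ref{lem:tc0} establishes \emph{using} Lemma~\ref{lem:th1-h2-h0h2} as input.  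The order in the paper is: first the purely algebraic Massey product in $\Ext_\R$ (this lemma), then the Toda-bracket existence plus Moss to conclude that $\tau c_0$ is a permanent cycle.  You cannot prove the first step by appealing to the second.

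The paper's actual argument is considerably more elementary than your proposal: the classical Massey product $\langle h_1, h_2, h_0 h_2\rangle = c_0$ in $\Ext_{\cl}$ is lifted (via the motivic-to-classical comparison and the $\tau$-multiplication) to give $\tau c_0 \in \langle \tau h_1, h_2, h_0 h_2\rangle$ directly in $\Ext_\R$, with no reference to homotopy groups at all.  The indeterminacy is then read off by inspection of the trigraded $\Ext_\R$ chart.  One further small error: you justify $\tau h_1 \cdot h_0 h_3 = \rho\cdot\tau h_1\cdot h_1 h_3$ via an alleged relation $h_0 h_3 = \rho h_1 h_3$, which is false (the left side is annihilated by $\rho$, the right side is $\rho$-local).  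The correct justification is the relation $h_0 \cdot \tau h_1 = \rho h_1 \cdot \tau h_1$ from Table~\ref{tab:ExtR-mult}, applied to $\tau h_1 \cdot h_0 h_3 = (h_0\cdot\tau h_1)\cdot h_3$.  The final indeterminacy conclusion is correct, but the stated reason is not.
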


\begin{proof}
Recall that there is a classical Massey product
$\langle h_1, h_2, h_0 h_2 \rangle = c_0$.
This implies that the motivic Massey product
$\langle \tau h_1, h_2, h_0 h_2 \rangle$ contains $\tau c_0$.

By inspection, the indeterminacy is generated by
$\rho \cdot \tau h_1 \cdot h_1 h_3$.
\end{proof}

\begin{lemma}
\label{lem:tc0}
The element $\tau c_0$ is a permanent cycle.
\end{lemma}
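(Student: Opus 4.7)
The plan is to mimic the argument used in Lemma~\ref{lem:t^2h2}: combine the Massey product identification from Lemma~\ref{lem:th1-h2-h0h2} with Moss's Convergence Theorem \cite{Moss70} to realize $\tau c_0$ (up to indeterminacy) as a detected Toda bracket, hence as a permanent cycle.

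First I would fix names for homotopy classes: let $\tau\eta \in \hat\pi_{1,0}$ detect $\tau h_1$, let $\nu \in \hat\pi_{3,2}$ detect $h_2$, and let $\beta \in \hat\pi_{3,2}$ detect $h_0 h_2$ (so $\beta = 2\nu$ modulo higher Adams filtration). I would then verify that the Toda bracket $\langle \tau\eta, \nu, \beta \rangle$ is well-defined in $\hat\pi_{*,*}$, which amounts to checking two vanishing statements. The identity $\tau\eta \cdot \nu = 0$ was established already in the proof of Lemma~\ref{lem:t^2h2}. For the second vanishing, $\nu \cdot \beta \in \hat\pi_{6,4}$ is detected by $h_0 h_2^2$, and I would inspect the relevant tridegree on the $\Ext_\R$ chart (Figure~\ref{fig:ExtR}) together with Tables~\ref{tab:ExtR-mult} and \ref{tab:ExtR-reln}: either $h_0 h_2^2 = 0$ in $\Ext_\R$ by a relation in Milnor--Witt degree $2$, or the only candidate values in $\hat\pi_{6,4}$ are ruled out by $\rho$-torsion/$\rho$-local arguments exactly as in the proof of Lemma~\ref{lem:ExtR-mult}.

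Once the Toda bracket is defined, Moss's Convergence Theorem asserts that $\langle \tau\eta, \nu, \beta\rangle$ is detected by an element of the matching Massey product $\langle \tau h_1, h_2, h_0 h_2\rangle$ in $\Ext_\R$, provided there are no crossing Adams differentials in the relevant range. Since Proposition~\ref{prop:ASS-diffs} (modulo the current lemma) has already ruled out all Adams differentials in Milnor--Witt degree $\leq 4$ on the relevant classes, this hypothesis is immediate. By Lemma~\ref{lem:th1-h2-h0h2}, that Massey product equals $\tau c_0$ modulo the indeterminacy generated by $\rho \cdot \tau h_1 \cdot h_1 h_3$. The elements $\rho$, $\tau h_1$, $h_1$, and $h_3$ are all known to be permanent cycles from Lemmas~\ref{lem:d1} and \ref{lem:t^2h2} style bookkeeping, so their product is too. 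Therefore some lift of $\tau c_0$ is a permanent cycle, and subtracting the (permanent cycle) indeterminacy element shows that $\tau c_0$ itself is a permanent cycle.

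The main obstacle will be the second vanishing $\nu \cdot \beta = 0$: unlike $\tau\eta \cdot \nu$, the product $h_0 h_2^2$ is not obviously killed by a single Massey-product trick, and one must argue carefully using the explicit structure of $\Ext_\R$ in tridegree $(6,\ast,4)$ together with $\rho$-localization to exclude all possible nonzero hidden values. A secondary, more routine concern is confirming the no-crossing-differentials hypothesis of Moss's theorem in the specific Adams tridegrees swept out by the bracket, but this follows directly from the piecewise verification already carried out for the other generators in Proposition~\ref{prop:ASS-diffs}.
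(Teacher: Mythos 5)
Your proposal follows the paper's strategy exactly: establish that a suitable Toda bracket $\langle \tau\eta, \nu, -\rangle$ is defined, then invoke Moss's Convergence Theorem together with the Massey product computation of Lemma~\ref{lem:th1-h2-h0h2} to conclude that $\tau c_0$ (up to the permanent-cycle indeterminacy $\rho \cdot \tau h_1 \cdot h_1 h_3$) is a permanent cycle. Two remarks on the details. First, the paper takes the third slot of the bracket to be the specific element $\omega\nu$ (rather than a generic $\beta$ detected by $h_0 h_2$), so that the second vanishing becomes $\nu \cdot \omega\nu = \omega\nu^2$; this is then dismissed immediately ``because there are no other possibilities''---in Milnor--Witt degree $2$ and stem $6$, the only class in $\Ext_\R$ is $h_2^2$ in filtration $2$, and $h_0 h_2^2 = 0$ by Table~\ref{tab:ExtR-reln}, so nothing in filtration $\geq 3$ can detect a nonzero $\omega\nu^2$. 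The obstacle you flag as ``the main obstacle'' is therefore no harder than the first vanishing; indeed the first vanishing $\tau\eta\cdot\nu = 0$ was likewise settled by direct inspection in the proof of Lemma~\ref{lem:t^2h2}, not by ``a single Massey-product trick.'' Second, appealing to Proposition~\ref{prop:ASS-diffs} (even ``modulo the current lemma'') to dispose of Moss's crossing-differentials hypothesis is circular in spirit, since that proposition's proof is where this very lemma is invoked; the paper simply does not belabor this hypothesis, and if you want to verify it you should do so directly from the $\Ext_\R$ charts in the relevant tridegrees rather than by citing the downstream proposition.
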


\begin{proof}
As shown in Table \ref{tab:notation}, let 
$\tau \eta$ and $\omega$ be elements of $\hat\pi_{1,0}$ and $\hat\pi_{0,0}$
detected by $\tau h_1$ and $h_0$.
As in the proof of Lemma \ref{lem:t^2h2},
the product $\tau \eta \cdot \nu$ is zero.
Also, the product $\omega \nu^2$ is zero because there are no other
possibilities.

We have just shown that the 
Toda bracket $\langle \tau \eta, \nu, \omega \nu \rangle$ is well-defined.
Moss's Convergence Theorem \cite{Moss70} then implies that
the Massey product $\langle \tau h_1, h_2, h_0 h_2 \rangle$
contains a permanent cycle.
We computed this Massey product in
Lemma \ref{lem:th1-h2-h0h2}, so we know that
$\tau c_0$ or $\tau c_0 + \rho \cdot \tau h_1 \cdot h_1 h_3$ is a permanent cycle.
We already know that $\rho \cdot \tau h_1 \cdot h_1 h_3$ is a permanent cycle, 
so 
$\tau c_0$ is also a permanent cycle.
\end{proof}

\section{Milnor-Witt modules and $\hat\pi_{*,*}$}
\label{sctn:MW-module}

In this section, we will describe how to pass from
the Adams $E_\infty$-page to $\hat\pi_{*,*}$.
We recall the following well-known elements
\cite{DI13} \cite{Morel04}.
\begin{enumerate}
\item
$\epsilon$ in $\hat\pi_{0,0}$ is represented by the twist map
on $S^{1,1} \Smash S^{1,1}$.
\item
$\rho$ in $\hat\pi_{-1,-1}$ is represented by the inclusion
$\{ \pm 1 \} \map (\A^1-0)$.
\item
$\eta$ in $\hat\pi_{1,1}$ is represented by the Hopf construction on the
multiplication $(\A^1 -0) \times (\A^1-0) \map (\A^1 -0)$.
\item
$\nu$ in $\hat\pi_{3,2}$ is represented by the Hopf construction on a version
of quaternionic multiplication.
\item
$\sigma$ in $\hat\pi_{7,4}$ is represented by the Hopf construction on
a version of octonionic multiplication.
\end{enumerate}

The element $1-\epsilon$ is detected in the Adams spectral sequence by $h_0$.
Thus it, rather than $2$, deserves to be considered the zeroth motivic Hopf
map.  Because it
plays a critical role, it is convenient to
give this element a name.

\begin{defn}
\label{defn:omega}
The element $\omega$ in $\hat\pi_{0,0}$ equals $1 - \epsilon$.
\end{defn}

The motivic Adams $E_\infty$-page is the 
associated graded module of the motivic homotopy groups
$\hat\pi_{*,*}$
with respect to the adic filtration for the ideal
generated by $\omega$ and $\eta$.
Note that this ideal also equals $(2,\eta)$
because of the relation $\rho \eta = - 1 - \epsilon = \omega - 2$.

The elements $\rho$, $h_0$, and $h_1$ detect the homotopy elements
$\rho$, $\omega$, and $\eta$ in the $0$th Milnor-Witt stem $\Pi_0$.
The relation $2 = \omega - \rho \eta$ implies that
$h_0 + \rho h_1$, rather than $h_0$, detects $2$.  This means
that we must be careful when computing the additive structure
of Milnor-Witt stems.

In the Adams chart,
a parallelogram such as
\begin{center}
\begin{pspicture}(0,-0.5)(1,1.5)
\pscircle*(0,0){\cirrad}
\pscircle*(1,0){\cirrad}
\pscircle*(1,1){\cirrad}
\pscircle*(2,1){\cirrad}
\psline[linecolor=honecolor](0,0)(1,1)
\psline[linecolor=honecolor](1,0)(2,1)
\psline[linecolor=pcolor](0,0)(1,0)
\psline[linecolor=pcolor](1,1)(2,1)
\psline[linecolor=hzerocolor](1,0)(1,1)
\rput(1.3,0){$\scriptstyle{x}$}
\rput(0.8,1.1){$\scriptstyle{y}$}
\end{pspicture}
\end{center}
indicates that $2$ times the homotopy elements detected by $x$
are zero (or detected in higher Adams filtration by a hidden
extension) because $(h_0 + \rho h_1) x = 0$.
On the other hand, a parallelogram such as
\begin{center}
\begin{pspicture}(0,-0.5)(1,1.5)
\pscircle*(0,0){\cirrad}
\pscircle*(1,0){\cirrad}
\pscircle*(1,1){\cirrad}
\pscircle*(2,1){\cirrad}
\psline[linecolor=honecolor](0,0)(1,1)
\psline[linecolor=honecolor](1,0)(2,1)
\psline[linecolor=pcolor](0,0)(1,0)
\psline[linecolor=pcolor](1,1)(2,1)
\rput(1.3,0){$\scriptstyle{x}$}
\rput(0.8,1.1){$\scriptstyle{y}$}
\end{pspicture}
\end{center}
indicates that $2$ times the homotopy elements detected by
$x$ are detected by $y$ because
$(h_0 + \rho h_1) x = y$.

We will choose specific homotopy elements to serve
as our $\Pi_0$-module generators.  Because of the associated graded
nature of the Adams $E_\infty$-page, there is some choice in these
generators.  
For the most part, the $\Pi_0$-module structures of 
the Milnor-Witt modules in our range
are insensitive to these choices,
so this is not of immediate concern.  
However, we would like to be as precise as we can
to facilitate further study.

These observations allow us to 
pass from the Adams spectral sequence to 
the diagrams of $\Pi_0$, $\Pi_1$, $\Pi_2$, and $\Pi_3$
given in Figure \ref{fig:MW-module}.

\subsection{The zeroth Milnor-Witt module}

For $\Pi_0$, the Adams spectral sequence consists
of an infinite sequence of dots extending upwards
in each stem except for the $0$-stem.
These
dots are all connected by $2$ extensions, so they assemble into
copies of $\Z_2$.
The 0-stem is somewhat more complicated.  Here there are two
sequences of dots extending upwards: elements of the form
$\rho^k h_1^k$ and elements of the form
$(h_0 + \rho h_1)^k = h_0^k + \rho^k h_1^k$.
The former elements assemble into a copy of
$\Z_2$ generated by $\rho \eta$, while the
latter elements assemble into a copy of $\Z_2$ generated by $1$.

\subsection{The first Milnor-Witt module}

For $\Pi_1$,
there are three elements in the 3-stem of the Adams spectral sequence. 
These elements assemble into a copy of $\Z/8$
generated by $\nu$; note that $h_0^k h_2 = (h_0 + \rho h_1)^k h_2$
because $h_1 h_2 = 0$.
The two elements $\tau h_1$ and $\rho \tau h_1^2$ 
in the 1-stem do not assemble into a copy
of $\Z/4$ because $(h_0 + \rho h_1) \tau h_1$ is zero.

We will now discuss precise definitions of
the $\Pi_0$-module generators of $\Pi_1$.

Recall that there is a functor from classical homotopy theory
to motivic homotopy theory over $\R$ that takes a sphere $S^{p}$
to $S^{p,0}$.  Let $\eta_{\tp}$ in $\hat\pi_{1,0}$ be the image
of the classical Hopf map $\eta$.
By an argument analogous to the proof of Lemma \ref{lem:Cnutop},
$\eta_{\tp}$ is detected by $\tau h_1 + \rho^2 h_2$.
Therefore
$\eta_{\tp} + \rho^2 \nu$ is detected by $\tau h_1$.

\begin{defn}
Let $\tau \eta$ be the element $\eta_{\tp} + \rho^2 \nu$ of $\hat\pi_{1,0}$.
\end{defn}

Another possible approach to defining $\tau \eta$ is to use a Toda bracket
to specify a single element.  However, the obvious Toda brackets detecting
$\tau h_1$ all have indeterminacy, so they are unsuitable for this purpose.

In terms of algebraic formulas we could write
\[ \Pi_1=\Pi_0 \langle \tau \eta, \nu \rangle /
(2\cdot \tau \eta, 8 \nu, \eta \nu, \rho^2 \cdot \tau \eta,
\eta^2 \cdot \tau \eta - 4 \nu ),
\]
but we find Figure \ref{fig:MW-module} to be more informative.

\subsection{The second Milnor-Witt module}

The calculation of $\Pi_2$ involves the same kinds of considerations
that we already described for $\Pi_0$ and for $\Pi_1$.
The names of the generators $(\tau \eta)^2$ and $\nu^2$ reflect the
multiplicative structure of the Milnor-Witt stems.

It remains to specify a choice of generator detected by $\tau^2 h_0$.
Recall that there is a realization functor from motivic homotopy theory
over $\R$ to classical homotopy theory.  (This functor factors through
$\Z/2$-equivariant homotopy theory, but we won't use the equivariance
for now.)

\begin{defn}
Let $\tau^2 \omega$ be the element of $\hat\pi_{0,-2}$
detected by $\tau^2 h_0$ that realizes to $2$ in classical $\pi_0$.
\end{defn}

In terms of algebraic formulas we could write
\[ \Pi_2 = \Pi_0 \langle \nu^2 \rangle /
( 2 \nu^2, \eta \nu^2) \oplus
\Pi_0 \langle \tau^2 \omega, (\tau \eta)^2 \rangle /
(\rho \cdot \tau^2 \omega, 2(\tau \eta)^2, \eta^2 (\tau \eta)^2,
\rho(\tau \eta)^2-\eta \cdot \tau^2 \omega).
\]
The unreadability of this formula illustrates why the graphical calculus 
of Figure \ref{fig:MW-module} is so helpful.

\subsection{The third Milnor-Witt module}

The structure of $\Pi_3$ is significantly more complicated.
We will begin by discussing choices of generators.

\begin{defn}
Let $\tau^2 \nu$ be the element $\nu_{\tp} + \rho^4 \sigma$
in $\hat\pi_{3,0}$.
\end{defn}

A precise definition 
of the generator $\tau \nu^2$ detected by $\tau h_2^2$
has so far eluded us.
For the purposes of this article, it suffices to choose
an arbitrary homotopy element detected by $\tau h_2^2$.
The distinction between the choices is not relevant in the 
range under consideration here, but it may be important 
for an analysis of higher Milnor-Witt stems.

Lemma \ref{lem:epsilonbar-defn}
gives a definition of the generator
detected by $c_0$, assuming that 
$\tau \nu^2$ has already been chosen.

\begin{lemma}
\label{lem:epsilonbar-defn}
There is a unique element $\epsilonbar$ in $\hat\pi_{8,5}$
detected by $c_0$ such that $\rho \epsilonbar - \eta \cdot \tau \nu^2$
equals zero.
\end{lemma}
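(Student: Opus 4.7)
The proof splits into existence and uniqueness of $\epsilonbar$.

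\emph{Uniqueness.} If $\epsilonbar_1, \epsilonbar_2 \in \hat\pi_{8,5}$ are both detected by $c_0$ and both satisfy the equation $\rho \epsilonbar_i = \eta \cdot \tau \nu^2$, then their difference $\delta = \epsilonbar_1 - \epsilonbar_2$ has strictly higher Adams filtration and satisfies $\rho \delta = 0$. By Proposition~\ref{prop:ASS-diffs} there are no Adams differentials in this range, so $\delta$ is detected in $\Ext_\R^{8,f,5}$ for some $f > 3$. I would read off the list of generators of this subspace from Table~\ref{tab:ExtR-gen} in Milnor-Witt degree $3$ (they are all $\rho^k$-multiples, pulled back from classes in higher stems such as $h_1 c_0$), and verify by direct inspection of Figure~\ref{fig:ExtR} that $\rho$-multiplication is injective on the span of these generators. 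This forces $\delta = 0$.

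\emph{Existence.} Pick any representative $\epsilonbar_0 \in \hat\pi_{8,5}$ detected by $c_0$, and set
\[
x \;=\; \rho \epsilonbar_0 \;-\; \eta \cdot \tau \nu^2 \;\in\; \hat\pi_{7,4}.
\]
The leading Adams term of $x$ is the difference $\rho c_0 - h_1 \cdot \tau h_2^2 \in \Ext_\R^{7,3,4}$. The key step is the hidden multiplicative identity
\[
\rho c_0 \;=\; h_1 \cdot \tau h_2^2 \qquad \text{in } \Ext_\R^{7,3,4},
\]
which I would establish by the Massey-product machinery used throughout Section~\ref{sctn:ExtR}: starting from $\tau c_0 \in \langle \tau h_1, h_2, h_0 h_2 \rangle$ (Lemma~\ref{lem:th1-h2-h0h2}) and shuffling $\rho$ through the bracket via the $\rho$-Bockstein relation $d_1(\tau) = \rho h_0$, then applying May's Convergence Theorem. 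Once the identity is in hand, $x$ has Adams filtration at least $4$. A routine inspection of Table~\ref{tab:ExtR-gen} at $(s,w)=(7,4)$ and $(8,5)$ then identifies $x$ with $\rho \delta$ for some $\delta \in \hat\pi_{8,5}$ of Adams filtration $>3$; then $\epsilonbar := \epsilonbar_0 - \delta$ satisfies the lemma.

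\emph{Main obstacle.} The hardest step is proving the hidden multiplicative identity $\rho c_0 = h_1 \cdot \tau h_2^2$. This relation is invisible on the $\rho$-Bockstein $E_\infty$-page and requires a Massey-product argument in the spirit of Lemmas~\ref{lem:p^2-th1-h2} and~\ref{lem:th1-h2-h0h2}. Once it is in place, the remaining $\rho$-injectivity and image-identification steps are routine bookkeeping against the charts and tables of Sections~\ref{sctn:table} and~\ref{sctn:chart}.
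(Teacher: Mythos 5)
Your overall shape is right, but there is a genuine gap in the existence step and you also duplicate work the paper has already done.

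First, the relation $\rho c_0 + h_1 \cdot \tau h_2^2 = 0$ is \emph{not} a new obstacle at this point in the paper. It is recorded in Table~\ref{tab:ExtR-mult} (the entry for $\tau h_2^2 \cdot h_1$) and established in Lemma~\ref{lem:ExtR-mult}, which happens well before Lemma~\ref{lem:epsilonbar-defn}. You therefore do not need to rerun the Massey-product machinery to conclude that $\rho\epsilonbar_0 - \eta\cdot\tau\nu^2$ lies in Adams filtration $\geq 4$; that is an immediate consequence of an already-available relation, and presenting it as ``the hardest step'' misallocates your effort.

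Second, and more seriously, the phrase ``a routine inspection of Table~\ref{tab:ExtR-gen}\dots identifies $x$ with $\rho\delta$'' glosses over the one case that could actually break the argument. At $(s,w)=(7,4)$ with $f\geq 4$ the possible detectors of $x$ are not only the $\rho$-divisible classes $\rho^2 h_1 c_0$ and $\rho^3 h_1^2 c_0$ but also $h_0^3 h_3$, which sits at $(7,4,4)$ in Milnor--Witt degree~$3$ and is \emph{not} a $\rho$-multiple of anything at $(8,\ast,5)$. If $x$ were detected by $h_0^3 h_3$ (or by a sum involving it), you could not absorb that term by modifying $\epsilonbar_0$, and the lemma would fail. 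Ruling this out is the only nontrivial step, and the paper does it with a short but essential observation: $x=\rho\epsilonbar_0-\eta\cdot\tau\nu^2$ is killed by $\omega=1-\epsilon$ because $\omega$ kills both $\rho$ and $\eta$; but $\omega$ does not kill anything detected by $h_0^3 h_3$, since $h_0\cdot h_0^3 h_3 = h_0^4 h_3 = \rho^3 h_1^2 c_0\neq 0$ (Table~\ref{tab:ExtR-reln}). Without an argument of this kind your existence step is incomplete. Your uniqueness paragraph is fine in outline: the only classes in $\Ext_\R^{8,f,5}$ with $f>3$ are $\rho h_1 c_0$ and $\rho^2 h_1^2 c_0$, and $\rho$-multiplication is injective on both, so $\rho\delta=0$ forces $\delta=0$.
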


\begin{proof}
Let $\epsilonbar'$ be any element detected by $c_0$.
The relation $\rho c_0 + h_1 \cdot \tau h_2^2 = 0$
implies that $\rho \epsilonbar' - \eta \cdot \tau \nu^2$
is detected in higher Adams filtration.
Note that $\omega$ kills
$\rho \epsilonbar' - \eta \cdot \tau \nu^2$ because
it kills both $\rho$ and $\eta$.
Therefore,
$\rho \epsilonbar' - \eta \cdot \tau \nu^2$ cannot be
detected by $h_0^3 h_3$.
If 
$\rho \epsilonbar' - \eta \cdot \tau \nu^2$ is detected by
$\rho^2 h_1 c_0$, then we can add an element
detected by $\rho h_1 c_0$ to $\epsilonbar'$ to obtain our desired element
$\epsilonbar$.
Similarly, if
$\rho \epsilonbar' - \eta \cdot \tau \nu^2$ is detected by
$\rho^3 h_1^2 c_0$, then we can add an element
detected by $\rho^2 h_1^2 c_0$ to $\epsilonbar'$.
\end{proof}

\begin{remark}
The classical analogue of $\epsilonbar$ is traditionally
called $\epsilon$; we have changed the notation to avoid the 
unfortunate coincidence with the motivic element
$\epsilon$ in $\hat\pi_{0,0}$.
\end{remark}

Having determined generators for $\Pi_3$,
we now proceed to analyze its $\Pi_0$-module structure.
For the most part, this analysis follows the same arguments
familiar from the earlier Milnor-Witt stems. 
The 3-stem and the 7-stem present the greatest challenges,
so we discuss them in more detail.

In the 3-stem,
$\tau^2 \nu$ generates a copy of $\Z/8$;
these eight elements are detected by
$\tau^2 h_2$,
$h_0 \cdot \tau^2 h_2 + \rho^3 \cdot \tau h_2^2$,
and $h_0^2 \cdot \tau^2 h_2 + \rho^5 c_0$.
The element $\rho^3 \cdot \tau \nu^2$
also generates a copy of $\Z/8$;
these eight elements are detected by
$\rho^3 \cdot \tau h_2^2$,
$\rho^5 c_0$, and $\rho^6 h_1 c_0$.
Finally, $\rho^4 \sigma$ generates a copy of $\Z/8$;
these eight elements are detected by
$\rho^4 h_3$, $\rho^5 h_1 h_3$,
and $\rho^6 h_1^2 h_3$.

In the 7-stem, $\sigma$ generates a copy of $\Z/32$;
these 32 elements are detected by
$h_3$,
$h_0 h_3 + \rho h_1 h_3$,
$h_0^2 h_3 + \rho^2 h_1^2 h_3$,
$h_0^3 h_3$,
and $\rho^3 h_1^2 c_0$.
The element $\rho \eta \sigma$ generates a copy of $\Z/4$;
these four elements are detected by
$\rho h_1 h_3$ and $\rho^2 h_1^2 h_3$.
The element $\rho \epsilonbar - 4 \sigma$ also generates
a copy of $\Z/4$; these four elements are detected
by $h_0^2 h_3 + \rho^2 h_1^2 h_3 + \rho c_0$ and
$h_0^3 h_3 + \rho^2 h_1 c_0$.

The $\eta$ extension from $\eta^2 \sigma$ to
$\eta^2 \epsilonbar$ is hidden in the Adams spectral sequence.
This is the same as the analogous hidden extension in the classical
situation \cite{Toda62}.  This is the only hidden extension by
$\rho$, $\omega$, or $\eta$ in the range under consideration.


\newpage

\section{Tables}
\label{sctn:table}

\begin{longtable}{lll}
\caption{Multiplicative generators for the $\rho$-Bockstein $E_1$-page} \\
\toprule
$s-w$ & element & $(s,f,w)$ \\
\midrule \endfirsthead
\caption[]{Multiplicative generators for the $\rho$-Bockstein $E_1$-page} \\
\toprule
$s-w$ & element & $(s,f,w)$ \\
\midrule \endhead
\bottomrule \endfoot
\label{tab:MW-gens}
$0$ & $\rho$ & $(-1,0,-1)$ \tabularnewline
$0$ & $h_0$ & $(0,1,0)$ \tabularnewline
$0$ & $h_1$ & $(1,1,1)$ \tabularnewline
$1$ & $\tau$ & $(0,0,-1)$ \tabularnewline
$1$ & $h_2$ & $(3,1,2)$ \tabularnewline
$3$ & $h_3$ & $(7,1,4)$ \tabularnewline
$3$ & $c_0$ & $(8,3,5)$ \tabularnewline
$4$ & $Ph_1$ & $(9,5,5)$ \tabularnewline
$5$ & $Ph_2$ & $(11,5,6)$ \tabularnewline
\end{longtable}

\begin{longtable}{lllll}
\caption{Bockstein differentials} \\
\toprule
$(s,f,w)$ & $x$ & $d_r$ & $d_r(x)$ & proof \\
\midrule \endfirsthead
\bottomrule \endfoot
\label{tab:Bockstein-diff}
$(0,0,-1)$ & $\tau$ & $d_1$ & $\rho h_0$ & Lemma \ref{lem:d1} \\
$(0,0,-2)$ & $\tau^2$ & $d_2$ & $\rho^2 \tau h_1$ & Lemma \ref{lem:d2} \\
$(0,0,-4)$ & $\tau^4$ & $d_4$ & $\rho^4 \tau^2 h_2$ & Lemma \ref{lem:d4} \\
$(1,1,-3)$ & $\tau^4 h_1$ & $d_6$ & $\rho^6 \tau h_2^2$ & Lemma \ref{lem:d6} \\
$(2,2,-2)$ & $\tau^4 h_1^2$ & $d_7$ & $\rho^7 c_0$ & Lemma \ref{lem:d6} \\
$(7,4,3)$ & $\tau h_0^3 h_3$ & $d_4$ & $\rho^4 h_1^2 c_0$ & Lemma \ref{lem:d4} \\
$(9,5,5)$ & $P h_1$ & $d_3$ & $\rho^3 h_1^3 c_0$ & Lemma \ref{lem:d3} \\
\end{longtable}

\begin{longtable}{lllllll}
\caption{$\F_2[\rho]$-module generators for the $\rho$-Bockstein $E_1$-page} \\
\toprule
 $0$ & $1$ & $2$ & $3$ & & $4$ & \\
\midrule \endfirsthead
\caption[]{$\F_2[\rho]$-module generators for the $\rho$-Bockstein $E_1$-page} \\
\toprule
$0$ & $1$ & $2$ & $3$ & & $4$ & \\
\midrule \endhead
\bottomrule \endfoot
\label{tab:rho-Bock-gens}
$h_0^k$ & $\tau$  & $\tau^2$ & $\tau^3$ & $h_3$ & $\tau^4$ & $\tau h_0 h_3$ \\ 
$h_1^k$ & $\tau h_0^k$ & $\tau^2h_0^k$ & $\tau^3h_0^k$ & $h_0 h_3$ & 
  $\tau^4h_0^k$ & $\tau h_0^2 h_3$ \\ 
& $\tau h_1$ & $\tau^2 h_1$ & $\tau^3h_1$ & $h_0^2 h_3$ & $\tau^4 h_1$ & 
  $\tau h_0^3 h_3$ \\ 
& $\tau h_1^2$ & $\tau^2 h_1^2$ & $\tau^3h_1^2$ & $h_0^3 h_3$ & $\tau^4 h_1^2$ &
  $\tau h_1 h_3$ \\
& $\tau h_1^3$ & $\tau^2 h_1^3$ & $\tau^3h_1^3$ & $h_1 h_3$ & $\tau^4 h_1^3$ &
  $\tau h_1^2 h_3$ \\
& $h_2$ & $\tau h_2$ & $\tau^2 h_2$ & $h_1^2 h_3$ & $\tau^3 h_2$ & $\tau c_0$ \\
& $h_0h_2$ & $\tau h_0h_2$ & $\tau^2 h_0h_2$ & $c_0$ & $\tau^3 h_0h_2$ & 
  $\tau h_1 c_0$ \\
& & $h_2^2$ & $\tau h_2^2$ & $h_1^k c_0$ & $\tau^2 h_2^2$ & $P h_1$ \\
& & & & & $\tau h_3$ & $h_1^k P h_1$ \\
\end{longtable}

\newpage

\begin{longtable}{llllll}
\caption{Some $\F_2[\rho]$-module generators for the $\rho$-Bockstein $E_2$-page} \\
\toprule
 $0$ & $1$ & $2$ & $3$ & $4$ & \\ 
\midrule \endfirsthead
\caption[]{Some $\F_2[\rho]$-module generators for the $\rho$-Bockstein $E_2$-page} \\
\toprule
 $0$ & $1$ & $2$ & $3$ & $4$ & \\ 
\midrule \endhead
\bottomrule \endfoot
\label{tab:rho-Bock-E2}
 & $\tau h_1$  & $\tau^2$ & $\tau^3h_1$ &
  $\tau^4$ & $\tau h_1 h_3$ \\ 
& $\tau h_1^2$ & $\tau^2 h_1$ & $\tau^3h_1^2$
   & $\tau^4 h_1$ & $\tau h_1^2 h_3$ \\ 
&  & $\tau^2 h_1^2$ & $\tau^2h_2$ 
   & $\tau^4 h_1^2$ & $\tau c_0$ \\ 
&  & $\tau^2 h_1^3$ & $\tau h_2^2$ &  
  $\tau^4 h_1^3$ & $\tau h_1 c_0$ \\
&  &  & $c_0$ 
   & $\tau^2 h_2^2$ & $P h_1$ \\
&  &  & $h_1^k c_0$  & $\tau h_0^3 h_3$ &
  $h_1^k P h_1$ \\
\end{longtable}

\begin{longtable}{llllll}
\caption{Some $\F_2[\rho]$-module generators for the $\rho$-Bockstein $E_3$-page} \\
\toprule
$0$ & $1$ & $2$ & $3$ & $4$ & \\
\midrule \endfirsthead 
\caption[]{Some $\F_2[\rho]$-module generators for the $\rho$-Bockstein $E_3$-page} \\
\toprule
$0$ & $1$ & $2$ & $3$ & $4$ & \\
\midrule \endhead 
\bottomrule \endfoot
\label{tab:rho-Bock-E3}
&  &  & $\tau^2 h_2$
   &  $\tau^4$ & $\tau h_0^3 h_3$ \\
&  &  & $\tau h_2^2$ 
  &  $\tau^4 h_1$ & $\tau c_0$ \\ 
&  &  &  $c_0$
   & $\tau^4 h_1^2$ & $P h_1$ \\
&  &  &  $h_1^k c_0$ & $\tau^4 h_1^3$ & $h_1^k P h_1$ \\
&  & &  &  $\tau^2 h_2^2$ &  \\
\end{longtable}

\begin{longtable}{llllll}
\caption{$\F_2[\rho]$-module generators for the $\rho$-Bockstein 
$E_\infty$-page} \\
\toprule
$0$ & $1$ & $2$ & $3$ & & $4$ \\
\midrule \endfirsthead 
\caption[]{$\F_2[\rho]$-module generators for the $\rho$-Bockstein 
$E_\infty$-page} \\
\toprule
$0$ & $1$ & $2$ & $3$ & & $4$ \\
\midrule \endhead 
\bottomrule \endfoot
\label{tab:rho-Bock-Einfty}
$h_0^k\,(\rho)$ & $\tau h_1\, (\rho^2)$  & $\tau^2 h_0^k\,(\rho)$ & 
  $\tau^3 h_1^3\,(\rho)$ & $h_0^3 h_3\,(\rho)$ & $\tau^4h_0^k\,(\rho)$ \\
$h_1^k$(loc) & $\tau h_1^2\, (\rho^2)$ & $\tau^2h_1^2\,(\rho^2)$ & 
  $\tau^2 h_2\,(\rho^4)$ & $h_1 h_3$(loc) & $\tau^2 h_2^2\,(\rho^4)$ \\
& $\tau h_1^3\,(\rho)$ & $\tau^2 h_1^3\,(\rho^2)$ & $\tau^2 h_0 h_2\,(\rho)$ 
  & $h_1^2 h_3$(loc) & $\tau h_1 h_3\,(\rho^2)$ \\
& $h_2$(loc) & $h_2^2$(loc) & $\tau h_2^2\,(\rho^6)$ &
  $c_0\,(\rho^7)$ & $\tau h_1^2 h_3\,(\rho^2)$ \\
& $h_0h_2\,(\rho)$ &  & $h_3$(loc) & $h_1 c_0\,(\rho^7)$ & $\tau c_0\,(\rho^3)$\\
&  & & $h_0 h_3\,(\rho)$ & $h_1^2 c_0\,(\rho^4)$ & $\tau h_1 c_0\,(\rho^2)$ \\
&  &  & $h_0^2 h_3\,(\rho)$ & $h_1^{k+3} c_0\,(\rho^3)$ &  \\
\end{longtable}

\begin{longtable}{llllll}
\caption{$\F_2[\rho]$-module generators for $\Ext_\R$} \\
\toprule
$0$ & $1$ & $2$ & $3$ & & $4$ \\
\midrule \endfirsthead
\caption[]{$\F_2[\rho]$-module generators for $\Ext_\R$} \\
\toprule
$0$ & $1$ & $2$ & $3$ & & $4$ \\
\midrule \endhead
\bottomrule \endfoot
\label{tab:ExtR-gen}
$h_0^k\,(\rho)$ & $\tau h_1\, (\rho^2)$  & $\tau^2 h_0\,(\rho)$ & 
  $\tau^2 h_2\,(\rho^4)$ & $h_0^3 h_3\,(\rho)$ & $\tau^4h_0\,(\rho)$ \\
$h_1^k$(loc) & $\tau h_1 \cdot h_1\, (\rho^2)$ 
   & $\tau^2 h_0 \cdot h_0^k\,(\rho)$ & 
  $h_0 \cdot \tau^2 h_2\,(\rho)$ & $h_1 h_3$(loc) 
  & $\tau^4 h_0 \cdot h_0^k\,(\rho)$ \\
& $h_2$(loc) & $(\tau h_1)^2\,(\rho^2)$ & $h_0^2 \cdot \tau^2 h_2\,(\rho)$ 
  & $h_1^2 h_3$(loc) & $\tau^2 h_2 \cdot h_2\,(\rho^4)$ \\
& $h_0 h_2\,(\rho)$ & $(\tau h_1^2) h_1\,(\rho^2)$ & $\tau h_2^2\,(\rho^6)$ &
  $c_0\,(\rho^7)$ & $\tau h_1 \cdot h_3\,(\rho^2)$ \\
& $h_0^2 h_2\,(\rho)$ & $h_2^2$(loc) & $h_3$(loc) & $h_1 c_0\,(\rho^7)$ 
   & $\tau h_1 \cdot h_1 h_3\,(\rho^2)$\\
&  & & $h_0 h_3\,(\rho)$ & $h_1^2 c_0\,(\rho^4)$ & $\tau c_0\,(\rho^3)$ \\
&  &  & $h_0^2 h_3\,(\rho)$ & $h_1^{k+3} c_0\,(\rho^3)$ & 
   $h_1 \cdot \tau c_0\,(\rho^2)$ \\
\end{longtable}

\begin{longtable}{llll}
\caption{Multiplicative generators of $\Ext_{\R}$} \\
\toprule
$(s,f,w)$ & generator & ambiguity & definition \\
\midrule \endfirsthead
\caption[]{$\Ext_{\R}$ generators} \\
\toprule
$(s,f,w)$ & generator \\
\midrule \endhead
\bottomrule \endfoot
\label{tab:Ext-gen}
$(0,1,0)$ & $h_0$ & $\rho h_1$ & $\rho \cdot h_0 = 0$ \\
$(1,1,1)$ & $h_1$ \\
$(1,1,0)$ & $\tau h_1$ & $\rho^2 h_2$ & $\rho^2 \cdot \tau h_1 = 0$ \\
$(3,1,2)$ & $h_2$ \\
$(0,1,-2)$ & $\tau^2 h_0$ \\
$(3,1,0)$ & $\tau^2 h_2$ & $\rho^4 h_3$ & $\rho^4 \cdot \tau^2 h_2 = 0$ \\
$(6,2,3)$ & $\tau h_2^2$ & $\rho^2 h_1 h_3$ & $\rho^6 \cdot \tau h_2^2 = 0$ \\
$(7,1,4)$ & $h_3$ \\
$(8,3,5)$ & $c_0$ & $\rho h_1^2 h_3$ & $\rho^7 \cdot c_0 = 0$ \\
$(0,1,-4)$ & $\tau^4 h_0$ \\
$(8,3,4)$ & $\tau c_0$ \\
\end{longtable}

\begin{longtable}{l|lllll}
\caption{$\Ext_\R$ multiplication table} \\
\toprule
\endfirsthead
\caption[]{$\Ext_\R$ multiplication table} \\
\toprule
\midrule \endhead
\bottomrule \endfoot
\label{tab:ExtR-mult}
& $h_0$ & $h_1$ & $\tau h_1$ & $h_2$ & $\tau^2 h_0$  \\
\midrule
$h_0$ & $-$ \\
$h_1$ & $0$ & $-$ \\
$\tau h_1$ & $\rho h_1 \cdot \tau h_1$ & $-$ & $-$ \\
$h_2$ & $-$ & $0$ & $0$ & $-$ \\
$\tau^2 h_0$ & $-$ & $\rho (\tau h_1)^2$ & $\rho^5 \tau h_2^2$ & $h_0 \cdot \tau^2 h_2$ & $\tau^4 h_0 \cdot h_0$ \\
$\tau^2 h_2$ & $-$ & $\rho^2 \tau h_2^2$ & $\rho^2 \tau^2 h_2 \cdot h_2$ & $-$ \\
$\tau h_2^2$ & $0$ & $\rho c_0$ & $\rho \tau c_0$ & $\tau h_1 \cdot h_1 h_3$ \\
$h_3$ & $-$ & $-$ & $-$ & $0$ \\
$c_0$ & $0$ & $-$ & $h_1 \cdot \tau c_0$ & $0$  \\
$\tau^4 h_0$ & $-$ & $0$ \\
$\tau c_0$ & $\rho h_1 \cdot \tau c_0$ & $-$  \\
\end{longtable}

\begin{longtable}{ll}
\caption{Some relations in $\Ext_\R$} \\
\toprule
\endfirsthead
\caption[]{Some relations in $\Ext_\R$} \\
\toprule
\midrule \endhead
\bottomrule \endfoot
\label{tab:ExtR-reln}
$(s,f,w)$ & relation \\
\midrule
$(3,3,2)$ & $h_0^2 h_2 + \tau h_1 \cdot h_1^2 = 0$ \\
$(3,3,0)$ & $h_0^2 \cdot \tau^2 h_2 + (\tau h_1)^3 = \rho^5 c_0$ \\
$(9,3,6)$ & $h_1^2 h_3 + h_2^3 = 0$ \\
$(6,3,4)$ & $h_0 h_2^2 = 0$ \\
$(3,4,0)$ & $h_0^3 \cdot \tau^2 h_2 = \rho^6 h_1 c_0$ \\
$(7,5,4)$ & $h_0^4 h_3 = \rho^3 h_1^2 c_0$ \\
$(6,3,2)$ & $h_0 h_2 \cdot \tau^2 h_2 = \rho^2 \cdot \tau c_0$ \\
$(10,5,6)$ & $h_1^2 \cdot \tau c_0 = 0$ \\
\end{longtable}

\begin{longtable}{llll}
\caption{Notation for $\hat\pi_{*,*}$} \\
\toprule
$(s,w)$ & element & $\Ext$ & definition \\
\midrule \endfirsthead
\caption[]{Notation for $\hat\pi_{*,*}$} \\
\toprule
$(s,w)$ & element & $\Ext$ & definition \\
\midrule \endhead
\bottomrule \endfoot
\label{tab:notation}
$(-1,-1)$ & $\rho$ & $\rho$ & $\{ \pm 1 \} \map (\A^1-0)$ \\
$(0,0)$ & $\epsilon$ & $1$ & twist on $S^{1,1} \Smash S^{1,1}$ \\
$(0,0)$ & $\omega$ & $h_0$ & $1-\epsilon$ \\
$(1,1)$ & $\eta$ & $h_1$ & Hopf construction \cite{DI13} \cite{Morel04} \\
$(1,0)$ & $\tau \eta$ & $\tau h_1$ & $\eta_\tp + \rho^2 \nu$ \\
$(3,2)$ & $\nu$ & $h_2$ & Hopf construction \cite{DI13} \\
$(0,-2)$ & $\tau^2 \omega$ & $\tau^2 h_0$ & realizes to $2$ \\
$(3,0)$ & $\tau^2 \nu$ & $\tau^2 h_2$ & $\nu_\tp + \rho^4 \sigma$ \\
$(6,3)$ & $\tau \nu^2$ & $\tau h_2^2$ \\
$(7,4)$ & $\sigma$ & $h_3$ & Hopf construction \cite{DI13} \\
$(8,5)$ & $\ol{\epsilon}$ & $c_0$ & $\rho \epsilonbar = \eta \cdot \tau \nu^2$ \\
\end{longtable}


\newpage

\section{Charts}
\label{sctn:chart}

This section contains the charts necessary to carry out the 
computations of the article.

Figure \ref{fig:ExtC} depicts $\Ext_\C$ in a range.  This data is lifted
directly from \cite{DI10} or
\cite{Isaksen14a}.  Here is a key for reading Figure \ref{fig:ExtC}:
\begin{enumerate}
\item
Black dots indicate copies of $\M_2^\C$.
\item
Red dots indicate copies of $\M_2^\C/\tau$.
\item
Lines indicate multiplications by $h_0$, $h_1$, and $h_2$.
\item
Red arrows indicate infinitely many copies of $\M_2^\C/\tau$
that are connected by $h_1$ multiplications.
\item
Magenta lines indicate that a multiplication hits $\tau$ times
a generator.  For example, $h_0 \cdot h_0 h_2$ equals $\tau h_1^3$.
\end{enumerate}

Figure \ref{fig:Bockstein} depicts the various pages of the
$\rho$-Bockstein spectral sequence,
sorted by Milnor-Witt degree.  See Section \ref{sctn:Bockstein-analysis}
for further discussion.
Here is a key for reading
Figure \ref{fig:Bockstein}:
\begin{enumerate}
\item
Black dots indicate copies of $\F_2$.
\item
Red lines indicate multiplications by $\rho$.
\item
Green lines indicate multiplications by $h_0$.
\item
Blue lines indicate multiplications by $h_1$.
\item
Red (or blue) arrows indicate infinitely many copies of $\F_2$ that are
connected by $\rho$ (or $\eta$) multiplications.
\end{enumerate}

Figure \ref{fig:ExtR} depicts $\Ext_\R = \Ext_\cA (\M_2, \M_2)$,
sorted by Milnor-Witt degree.  The key for this figure is
the same as for Figure \ref{fig:Bockstein}, with the additional:
\begin{enumerate}
\item
Dashed lines indicate $h_0$ or $h_1$ multiplications that
are hidden in the $\rho$-Bockstein spectral sequence.
\end{enumerate}
See Section \ref{sctn:ExtR} for discussion of these hidden extensions.

Figure \ref{fig:MW-module} depicts the Milnor-Witt
modules $\Pi_0$, $\Pi_1$, $\Pi_2$, and $\Pi_3$.
See Section \ref{sctn:MW-module} for further discussion.
Here is a key for reading the diagram:
\begin{enumerate}
\item
Open circles denote copies of $\Z_2$.
\item
Solid dots denote copies of $\Z/2$.
\item
Open circles with an $n$ inside denote copies of $\Z/n$.
\item
Blue lines depict $\eta$-multiplications going to the right.
\item
Red lines depict $\rho$-multiplications going to the left.
\item
Lines labelled $n$ indicate that the
result of the multiplication is $n$ times the
labelled generator: for example, $\rho\cdot \rho\eta=-2\rho$ or
$\eta\cdot \rho^3=-2\rho^2$ in $\Pi_0$.
\item
Two blue (or red lines) with the same source indicate that the
multiplication by $\eta$ (or $\rho$) equals a linear combination.
For example, $\eta \cdot \tau \nu^2$ equals 
$(\rho \epsilonbar - 4 \sigma) + 4 \sigma = \rho \epsilonbar$
in $\Pi_3$.
\item
Arrows pointing off the diagram indicate infinitely many multiplications
by $\rho$ or by $\eta$.
\item
Elements in the same topolgical stem are aligned vertically.
For example, $\eta^3$, $\nu$, $\eta (\tau \eta)^2$, and
$\rho^3 \nu^2$ all belong to the 3-stem. Their
weights are 1, 2, 1, and 1 respectively; this can be deduced from
their stems and Milnor-Witt degrees.
\end{enumerate}

\begin{landscape}

\begin{myfigure}
\label{fig:ExtC}

\begin{center}
\textsc{Figure \ref{fig:ExtC}:} 
$\Ext_\C = \Ext_{\cA^\C}(\M_2^\C, \M_2^\C)$
\end{center}

\psset{unit=0.75}


\end{myfigure}


\begin{bibdiv}
\begin{biblist}

\bibselect{MWlow-bib}

\end{biblist}
\end{bibdiv}

\end{document}